\newtheorem{thm}{Theorem}[section]
\newtheorem{cor}[thm]{Corollary}
\newtheorem{lem}[thm]{Lemma}
\newtheorem{prop}[thm]{Proposition}
\newtheorem{result}[thm]{Result}
\theoremstyle{definition}
\numberwithin{equation}{section}
\newcommand{\norm}[1]{\left\Vert#1\right\Vert}
\newcommand{\abs}[1]{\left\vert#1\right\vert}
\newcommand{\rl}{{\mathbb{R}}}
\newcommand{\cx}{{\mathbb{C}}}
\newcommand{\dist}{{\mathrm { dist}}}
\newcommand{\lo}{{\rm Lip}_{\mathcal O}}
\DeclareMathOperator*{\res}{res}
\newcommand{\psh}{plurisubharmonic }
\newcommand{\dbar}{\overline{\partial}}
\newcommand{\bd}{\mathsf{b}}
\newcommand{\tmop}[1]{\ensuremath{\operatorname{#1}}}
\renewcommand{\Re}{\tmop{Re}}
\renewcommand{\Im}{\tmop{Im}}
\title{Function theory and  holomorphic maps on symmetric products of planar domains}
\subjclass[2010]{32A07, 32W05, 32H40.}
\author{Debraj Chakrabarti}
\address{TIFR Centre for Applicable Mathematics, Sharada Nagar, Chikkabommasandra, Bengaluru-560065, India}
\email{debraj@math.tifrbng.res.in}
\author{Sushil Gorai}
\thanks{Sushil Gorai was supported in part by an INSPIRE Faculty Fellowship awarded by DST, Government of India.}
\address{Stat-Math Unit, Indian Statistical Institute, 8th Mile, Mysore Road, Bengaluru-560059, India}
\email{sushil@isibang.ac.in}
\begin{document}
\begin{abstract} We show that the $\dbar$-problem
is globally regular on a domain in $\cx^n$, which is the $n$-fold symmetric product  of a smoothly bounded planar domain. 
Remmert-Stein type theorems are proved for proper holomorphic maps between equidimensional symmetric products and proper
holomorphic maps from cartesian products to symmetric products.
It is shown  that  proper holomorphic maps between equidimensional symmetric products of smooth planar domains are smooth up to the boundary.\end{abstract}

\maketitle
\section{Introduction}
\subsection{Symmetric Products}
Let $n\geq 2$ be an integer. Given an object $X$, a basic construction in mathematics is to form its $n$-fold cartesian product $X^n$ with 
itself, the set of ordered $n$-tuples $(x_1,x_2,\dots, x_n)$ of elements  $x_i\in X$.
One can equally consider the space $X^{(n)}$ of {\em unordered} $n$-tuples of elements of $X$, called the $n$-fold {\em symmetric product} of $X$.
If $\sigma$ is a bijection of the set
 $I_n=\{1,2,\dots, n\}$ with itself, the tuples $(x_1,x_2,\dots, x_n)$ and $(x_{\sigma(1)}, x_{\sigma(2)},\dots, x_{\sigma(n)})$ are to be considered identical  in 
$X^{(n)}$.    Unlike in the case of the cartesian product, the smoothness of $X$ 
is usually not inherited by $X^{(n)}$.   For example, if $X$ is a complex manifold, the symmetric product $X^{(n)}$ is in general only a { complex space},
which locally looks like a complex analytic set, possibly with singularities. 

If  $U$ is a Riemann surface, then the symmetric product $U^{(n)}$ is a complex manifold of dimension $n$ in a natural way.
In particular, if  $U$ is a domain in the complex plane, the symmetric product $U^{(n)}$  
is in fact  biholomorphic to a domain $\Sigma^n U$ in $\cx^n$, which may be 
 constructed in the following way.
For $k\in I_n=\{1,2,\dots, n\}$, denote by $\pi_k$ the $k$-th
{ elementary symmetric polynomial} in $n$ variables.  
Let $\pi:\cx^n\to \cx^n$ be the polynomial map given by $ \pi=(\pi_1,\dots,\pi_n)$,
referred to as the {\em symmetrization} map on $\cx^n$.  We then define $ \Sigma^nU = \pi(U^n)$,
where $U^n\subset\cx^n$ is the $n$-fold cartesian product of the domain $U$.  Then $\Sigma^n U$ is biholomorphic to the
$n$-fold symmetric product $U^{(n)}$ (see Section~\ref{sec-symmetrization} below.)

In this paper, we study the domains $\Sigma^n U$ from the point of view of the classical theory of functions and mappings of several complex variables.
For any domain $U\subset \cx$,  $\Sigma^n U$ is a pseudoconvex domain in $\cx^n$. The main interest is therefore in the study of boundary properties
of functions and mappings. The boundary $\bd \Sigma^n U$ of $\Sigma^n U$ is a
singular  hypersurface in $\cx^n$, with a Levi-flat smooth part. Even when $U$ has smooth boundary,
the boundary $\bd \Sigma^n U$ is not Lipschitz, 
a fact which was shown to us recently by {\L}ukasz Kosi\'{n}ski in the case $n=2$, and from which the general case follows.
In spite of having such non-smooth boundary, it turns out that the complex-analytic properties of $\Sigma^n U$ are in many ways
similar to  those of the cartesian product $U^n$.

\subsection{Main Results} Recall that 
the $\dbar$-problem  on a domain consists of solving the equation $\dbar u=g$, where $g$ is a given $(p,q)$-form with $q>0$ such 
that $\dbar g=0$, and $u$ is an unknown $(p,q-1)$-form. Since  $\Sigma^n U$ is pseudoconvex, 
the $\dbar$-problem always has a solution on it. We  also have interior regularity: if the datum $g$ has coefficients which are 
$\mathcal{C}^\infty$-smooth, there is a solution $u$ which also has smooth coefficients (see e.g. \cite{hor}.)
The interesting question  regarding $\Sigma^n U$ is therefore that of boundary regularity of the $\dbar$-problem. 
Recall that on a domain $\Omega\Subset\cx^n$, the $\dbar$-problem is said to be {\em globally regular} if for any $\dbar$-closed form
$g\in \mathcal{C}^\infty_{p,q}(\overline{\Omega})$   with $q>0$, there exists a form 
$u\in \mathcal{C}^\infty_{p,q-1}(\overline{\Omega})$ such that $\dbar u= g$,
where  $\mathcal{C}^\infty_{p,q}(\overline{\Omega})$ denotes the space of forms of 
degree $(p,q)$ with coefficients which are smooth up to the boundary. Our first result is the following:
\begin{thm}\label{thm-dbar}  
If $U\Subset \cx$ is a domain with $\mathcal{C}^\infty$-smooth boundary in the plane,  the $\dbar$-problem is globally regular on $\Sigma^nU$.
\end{thm}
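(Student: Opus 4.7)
The plan is to transfer the $\dbar$-problem from $\Sigma^n U$ to the $n$-fold Cartesian product $U^n$ via the symmetrization map $\pi : U^n \to \Sigma^n U$, which is proper holomorphic of degree $n!$, solve it on the more tractable $U^n$, and descend. The first ingredient is global regularity of $\dbar$ on $U^n$: since $U$ is a smoothly bounded planar domain, the Cauchy--Pompeiu formula furnishes a $\dbar$-solution operator on $U$ that is smoothing up to $\bd U$, and iterating this operator across the factors yields global regularity on the product $U^n$, despite the corner singularities at $\bd U^n$.

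Given $g \in \mathcal{C}^\infty_{p,q}(\overline{\Sigma^n U})$ with $\dbar g = 0$, I would form $\pi^{*} g \in \mathcal{C}^\infty_{p,q}(\overline{U^n})$; this form is $\dbar$-closed and, being a pullback, is $S_n$-invariant under the permutation action on $U^n$. Using the operator from the previous step, I obtain $w \in \mathcal{C}^\infty_{p,q-1}(\overline{U^n})$ with $\dbar w = \pi^{*} g$. Averaging $w$ over $S_n$ (an action by holomorphic automorphisms, which commutes with $\dbar$) produces an $S_n$-invariant solution, so after this averaging I may assume $w$ itself is $S_n$-invariant.

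The crux of the argument is to descend $w$: to produce $u \in \mathcal{C}^\infty_{p,q-1}(\overline{\Sigma^n U})$ with $\pi^{*} u = w$. Here $S_n$-invariance alone is insufficient, because not every $S_n$-invariant smooth function on $U^n$ descends to an ambient-smooth function on $\Sigma^n U \subset \cx^n$: for example, $|z_1 - z_2|^{2}$ on $\cx^2$ is smooth and $S_2$-invariant, but the corresponding function on $\Sigma^2 \cx$ equals $|\pi_{1}^{2} - 4\pi_{2}|$, which is only Lipschitz along the branch locus $\{\pi_{1}^{2} = 4\pi_{2}\}$. One must therefore arrange that $w$ is \emph{basic}, i.e., actually lies in the image of $\pi^{*}$, and not merely $S_n$-invariant. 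The ramification divisor of $\pi$ is cut out by the Vandermonde $\det d\pi = \prod_{i<j}(z_i - z_j)$, and I expect basicity of the solution to be enforced by replacing the ad hoc operator from Step 1 with a $\dbar$-solution operator on $U^n$ adapted to this divisor --- for instance through a Bell-type transformation formula using weights built from powers of the Jacobian, or through an explicit symmetric Koppelman-style integral kernel landing in the basic subcomplex. This step, coupling the branching geometry of $\pi$ to boundary regularity, is the main obstacle; once such a basic solution $w$ is obtained, the descended $u$ is smooth up to $\bd \Sigma^n U$ in the ambient sense and satisfies $\dbar u = g$ by the injectivity of $\pi^{*}$ off the ramification locus and continuity elsewhere.
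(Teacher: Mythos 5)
Your plan has a genuine gap, and you have honestly flagged it yourself: the descent step. Pulling back to $U^n$, solving there, and $S_n$-averaging produces an $S_n$-invariant smooth solution $w$ on $\overline{U^n}$, but such a $w$ need not lie in $\pi^*\bigl(\mathcal{C}^\infty(\overline{\Sigma^n U})\bigr)$. Your example $\abs{z_1-z_2}^2$ is exactly right: the pushforward of an $S_n$-invariant $\mathcal{C}^\infty$ function can fail to be smooth (even just $\mathcal{C}^1$) across the image of the ramification divisor, and the failure is already in the interior of $\Sigma^n U$, not only at the boundary. Repairing this requires constructing a $\dbar$-solution operator on $U^n$ landing in the \emph{basic} subcomplex $\pi^*\mathcal{C}^\infty_{p,q-1}(\overline{\Sigma^n U})$ while retaining boundary regularity, and no such operator is exhibited; ``Bell-type transformation with Jacobian weights'' and ``symmetric Koppelman kernel'' are plausible slogans but they do not by themselves deliver the needed cancellation along $\{\prod_{i<j}(z_i-z_j)=0\}$ meeting $\bd U^n$. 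As it stands, this is the whole difficulty, not a routine last step.

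The paper avoids the descent problem entirely by never passing to $U^n$. It first proves global regularity on $\Sigma^n V$ for a ``model'' $V$ with circular boundary by invoking Dufresnoy's theorem (Result~\ref{thm-duf}), which gives smooth solvability of $\dbar$ on a compact $\Gamma$ admitting a shrinking family of pseudoconvex neighborhoods $\Omega_\nu$ squeezed between tubular neighborhoods $\Gamma(\epsilon^\theta)\subset\Omega_\nu\subset\Gamma(\epsilon)$ and satisfying a holomorphic-convexity compatibility. These neighborhoods are the symmetric products of sublevel sets of a subharmonic, semi-algebraic defining function for $V$; semi-algebraicity of $\overline{\Sigma^n V}$ plus the {\L}ojasiewicz inequality give the required polynomial comparison between the distance function and the pseudoconvex exhaustion (Proposition~\ref{prop-dbarcirbdy}). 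For general smoothly bounded $U$, a conformal map $\varphi:V\to U$ extending to a $\mathcal{C}^\infty$ diffeomorphism of closures induces, by Theorem~\ref{thm-akn}, a map $\Sigma^n\varphi$ that is $\mathcal{C}^\infty$ up to $\overline{\Sigma^n V}$ and biholomorphic in the interior; pulling the $\dbar$-datum back to $\Sigma^n V$, solving there, and pushing forward finishes the proof. So the actual argument is a Stein-neighborhood/{\L}ojasiewicz argument on the symmetric product itself, combined with the boundary regularity of induced maps, rather than a cartesian-product-and-descent argument. If you want to pursue your route, you would need to produce the basic $\dbar$-solution operator from scratch; the paper's approach sidesteps this by exploiting that $\Sigma^n V$ has a Stein neighborhood basis with the geometry that Dufresnoy's theorem requires, something the Hartogs triangle, for instance, lacks.
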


For smoothly bounded domains, a classical approach to the study of the regularity of the $\dbar$-problem  is to reduce it to the 
study of the $\dbar$-Neumann problem for the complex Laplace operator (see \cite{straube, chen-shaw, fk} for details.) Indeed, from Kohn\rq{}s 
weighted theory of the $\dbar$-Neumann operator (see \cite{kohn}) it follows that on a smoothly bounded pseudoconvex domain in $\cx^n$, the $\dbar$-problem
is globally regular.  However, as we show in 
Proposition~\ref{prop-unonlip} below, the domain $\Sigma^n U$ is not Lipschitz, even though $U$ may have $\mathcal{C}^\infty$ boundary.
Consequently many of the basic tools of the theory of the $\dbar$-Neumann problem, including the crucial method of 
\lq\lq{}elliptic regularization\rq\rq{}, do not apply to  $\Sigma^n U$. Furthermore, the presence of complex hypersurfaces in the smooth 
part of $\bd\Sigma^n U$ means that the $\dbar$-Neumann operator on $\Sigma^n U$ is non-compact, and one would even expect it  to be  highly non-regular
(cf. \cite{ehsani}.)

For non-smooth pseudoconvex domains, the global regularity of the $\dbar$-problem is a  subtle matter.
It is well-known that on the Hartogs Triangle, the non-Lipschitz domain in $\cx^2$  given by  $\{\abs{z_1}<\abs{z_2}<1\}$, the $\dbar$-problem is not 
globally regular (see \cite{chaumat-chollet,chak-shaw}.) However the geometry of the Hartogs Triangle is very different from that of symmetric products
 in that the Hartogs Triangle does not
admit a basis of Stein neighborhoods.  The presence of a Stein neighborhood basis with appropriate geometric properties 
(``$s-H$"-convexity in the terminology of \cite{cc}) is crucial in the proof of 
Theorem~\ref{thm-dbar}. Note that though  the $\dbar$-Neumann operator on 
$\Sigma^n U$ is non-compact,  the $\dbar$-problem is globally regular.
The situation on $\Sigma^nU$ is therefore analogous to that on cartesian products (see \cite{prod, michelshaw}.)

Next, we  study  proper holomorphic mappings of symmetric products.  It is natural to study mappings between symmetric 
and cartesian products.  When $U=V=\mathbb{D}$,
the unit disc in the plane, this was done by Edigarian and Zwonek in \cite{edi1,ediz1}, using the classical method of Remmert and Stein 
(see \cite{remmertstein,nar}.) Generalizing this, we have the following:
\begin{thm}\label{thm-proper}Let $U_1,\dots, U_n$ be bounded domains in the  complex plane and let $G=U_1\times\dots\times U_n$ be their cartesian product.  
Let $f:G\to\Sigma^n V$ be a proper holomorphic map. Then there are proper holomorphic maps $g_j:U_j\to V$  such that 
\[ f = \pi\circ (g_1\times \dots\times g_n),\]
where by definition, for $z\in G$, we have  $(g_1\times \dots\times g_n)(z)=(g_1(z_1),\dots, g_n(z_n))$.
\end{thm}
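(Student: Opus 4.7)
The plan is to associate to $f$ a natural degree-$n$ branched cover of $G$ and decompose it via boundary analysis. Writing $f=(f_1,\dots,f_n)$ with image in $\Sigma^n V$, form the monic polynomial
\[
P_z(w) = w^n - f_1(z)w^{n-1} + \dots + (-1)^n f_n(z) \in \mathcal O(G)[w],
\]
whose zero set $Y = \{(z,w)\in G\times V : P_z(w)=0\}$ projects onto $G$ as a proper, finite, surjective map of degree $n$. The desired factorization $f=\pi\circ(g_1\times\dots\times g_n)$ is equivalent to $Y$ decomposing as the union of $n$ graphs $\{(z,g_i(z_i))\}$, each the graph of a proper holomorphic function of a single coordinate; the goal is to establish this decomposition.

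The core argument is a boundary analysis on the smooth strata of $\bd G$. For each $i$, let $B_i = U_1\times\dots\times\bd U_i\times\dots\times U_n$. Since $f$ is proper, each complex one-dimensional slice in the $z_i$ direction is a proper holomorphic map from a smoothly bounded planar domain into $\Sigma^n V$, and such maps extend continuously to the closure (via boundary regularity of proper finite analytic covers of planar domains). Hence $f$ extends continuously across $B_i$ to a map $\tilde f$ with $\tilde f(B_i)\subset\bd\Sigma^n V$. Generically on $B_i$, $\tilde f(z)$ lies in the smooth stratum of $\bd\Sigma^n V=\pi(\bd V^n)$, where exactly one ``boundary root'' lies on $\bd V$ and the other $n-1$ roots lie in $V$. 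The key step---and the main technical obstacle---is to show that this boundary root $\phi_i(z)$ depends only on $\zeta_i\in\bd U_i$. Fixing $\zeta_i$ and restricting to $\{\zeta_i\}\times\prod_{j\neq i}U_j$, the boundary limit $\tilde f$ is holomorphic in the remaining variables (uniform compact convergence of holomorphic slices), and the local factorization of $\pi$ on the smooth stratum exhibits $\phi_i$ as a holomorphic function with values in the real one-dimensional curve $\bd V\subset\cx$; the open mapping theorem then forces $\phi_i$ to be constant on the slice.

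Having established $\phi_i=\phi_i(\zeta_i)$, observe that near $B_i$ the root of $P_z$ approaching the boundary is uniquely determined; the corresponding local section lies in a unique irreducible component $Y_i\subset Y$, and constancy of degree for finite surjective maps between irreducible varieties gives $\deg(Y_i\to G)=1$. Thus $Y_i=\{(z,\psi_i(z))\}$ is the graph of a holomorphic $\psi_i:G\to V$ with boundary values $\phi_i(\zeta_i)$ along $B_i$. Since these boundary values are independent of $(z_j)_{j\neq i}$, a slicewise boundary uniqueness argument (applied to $\psi_i(z)-\psi_i(z_1^0,\dots,z_i,\dots,z_n^0)$, a holomorphic function on $G$ whose boundary values vanish along $B_i$) forces $\psi_i(z)=g_i(z_i)$ for a proper holomorphic $g_i:U_i\to V$. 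Performing this for each $i=1,\dots,n$ produces $n$ pairwise-distinct graph components of $Y$ of total degree $n$, which exhaust $Y$; this yields $P_z(w)=\prod_{i=1}^n(w-g_i(z_i))$, precisely the factorization $f=\pi\circ(g_1\times\dots\times g_n)$.
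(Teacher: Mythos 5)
Your proposal takes a genuinely different route from the paper — a geometric decomposition of the zero variety $Y=\{(z,w)\colon q(f(z);w)=0\}$ into graph components, as opposed to the paper's Remmert--Stein argument that manipulates minors $\Delta_k(\mathsf D_{n-1}\varphi)$, applies Cramer's rule to the differentiated relation $q(\varphi;c)\equiv 0$, and then transfers the resulting identities from boundary-limit functions $\varphi$ back to $f$ itself via Weierstrass's theorem and the maximum principle. The paper's method works with only normal-family limits and is entirely interior, while yours would, if it worked, give a cleaner structural picture of $Y$.

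However, there is a genuine gap. Your construction leans repeatedly on continuous boundary extension: you assert that $f$ extends continuously to the boundary strata $B_i$ ``via boundary regularity of proper finite analytic covers of planar domains,'' and later you invoke a ``slicewise boundary uniqueness argument'' based on vanishing boundary values. But the theorem assumes only that $U_1,\dots,U_n$ and $V$ are \emph{bounded} domains; no boundary smoothness is assumed for any $U_i$ or for $V$. Proper holomorphic maps of a rough planar domain into a higher-dimensional target such as $\Sigma^n V$ need not have any continuous boundary extension; the extension results you cite apply to finite proper self-covers of smoothly bounded planar domains, which is not the situation at hand. The paper sidesteps this entirely: instead of boundary values, it uses Montel's theorem to pass to normal-family limits $\varphi=\lim f(\cdot,w^{\nu_k})$ along interior sequences, applies Lemma~\ref{lem-bdy} to those limits, and then encodes the resulting constancy as polynomial identities among derivatives of $f$ that extend to $G$ by the maximum principle. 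Your ``open mapping forces $\phi_i$ constant on the slice'' step is the correct idea — it is exactly the content of the paper's Lemma~\ref{lem-bdy} — but it needs to be carried out for normal-family limits, not putative boundary traces.

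A secondary gap is the irreducible-component step. From the existence of a ``local section near $B_i$'' you conclude $\deg(Y_i\to G)=1$ by ``constancy of degree,'' but constancy of degree only says the degree is constant, not that it equals one, and a local section near the boundary does not obviously single out an irreducible component of $Y$ over all of $G$ whose degree over $G$ is one. This is precisely the point where the paper does its real work: passing from ``for each boundary sequence there is a constant $c$ with $q(\varphi;c)\equiv 0$'' to ``there is a holomorphic function $g_n$ of $z_n$ alone with $q(f;g_n)\equiv 0$ on $G$.'' That passage requires the determinant manipulations \eqref{eq-cramer}--\eqref{eq-psik} and a maximum-principle argument which your sketch does not supply a substitute for.
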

This can compared with the fact that  a  proper 
holomorphic mapping between equidimensional cartesian product domains splits as product of mappings in the factors (see \cite[page 77]{nar}.)

Theorem~\ref{thm-proper} leads to the classification of proper holomorphic  maps between equidimensional symmetric products: for any mapping 
$\psi:U\to V$ of planar domains,  let $\Sigma^n \psi:\Sigma^n U\to \Sigma^n V$ be the unique map such that
\begin{equation}\label{eq-sigmanpsi} (\Sigma^n\psi)(\pi(z_1,\dots, z_n))= \pi\left(\psi(z_1),\dots,\psi(z_n)\right).\end{equation}
(see  Section~\ref{sec-induced} below.) We have the following:
\begin{cor}\label{cor-proper}
$U,V$ be bounded planar domains, and let $\Phi:\Sigma^n U\to \Sigma^n V$ be a proper holomorphic map. Then there is a proper holomorphic $\varphi:U\to V$ 
such that $\Phi=\Sigma^n \varphi$.
\end{cor}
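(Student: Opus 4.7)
The plan is to reduce Corollary~\ref{cor-proper} to Theorem~\ref{thm-proper} by lifting $\Phi$ through the symmetrization map. First, recall that $\pi:\cx^n\to\cx^n$ is proper (the coordinates of any point are roots of a monic polynomial whose coefficients are $\pm\pi_k$, so they are bounded in terms of these), and moreover the restriction $\pi|_{U^n}:U^n\to\Sigma^n U$ is itself proper: any boundary point of $U^n$ has some coordinate on $\bd U$, and so does every permutation of it, which shows $\pi(\bd U^n)\cap\Sigma^n U=\emptyset$ and prevents preimages of compact sets in $\Sigma^n U$ from accumulating on $\bd U^n$.

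With this in hand, I would form the composition $F=\Phi\circ\pi:U^n\to\Sigma^n V$, which is proper and holomorphic. Since $U^n$ is a cartesian product of planar domains, Theorem~\ref{thm-proper} applies and yields proper holomorphic maps $g_1,\dots,g_n:U\to V$ such that
\[ F(z) = \pi\bigl(g_1(z_1),\dots,g_n(z_n)\bigr), \qquad z\in U^n. \]
The remaining task is to promote this decomposition to one with $g_1=\cdots=g_n$, and this is where the symmetry of $\pi$ is exploited.

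Since $\pi$ is $S_n$-invariant, so is $F$, which in terms of power sums means that for every $k\ge 1$ and every $\sigma\in S_n$,
\[ \sum_{i=1}^n g_i(z_{\sigma(i)})^k = \sum_{i=1}^n g_i(z_i)^k. \]
Applying this to a transposition $(ij)$ and cancelling common summands produces
\[ g_i(z_j)^k - g_j(z_j)^k \;=\; g_i(z_i)^k - g_j(z_i)^k \quad\text{for all } z_i,z_j\in U. \]
The left side depends only on $z_j$ and the right only on $z_i$, so $g_i^k-g_j^k$ is a constant $c_k$ on $U$. Taking $k=1$ gives $g_i-g_j\equiv c_1$; if $c_1\ne 0$, then $k=2$ forces $g_i+g_j\equiv c_2/c_1$, making both $g_i$ and $g_j$ constant. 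But a constant map from the open bounded set $U$ to $V$ is never proper, so $c_1=0$ and $g_i=g_j$ for all $i,j$. Setting $\varphi:=g_1$, we obtain a single proper holomorphic $\varphi:U\to V$ with every $g_i=\varphi$.

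Substituting back, $\Phi\circ\pi=\pi\circ(\varphi\times\cdots\times\varphi)$ on $U^n$, which is precisely the defining relation \eqref{eq-sigmanpsi} for $\Sigma^n\varphi$; surjectivity of $\pi:U^n\to\Sigma^n U$ then forces $\Phi=\Sigma^n\varphi$. The main obstacle in this plan is the symmetry step: the permutation-invariance of $F$ a priori only asserts an equality of multisets of values in $V$, and extracting from it honest pointwise equalities among the holomorphic functions $g_i$ is exactly what the power-sum computation, combined with the fact that proper holomorphic maps between bounded planar domains are non-constant, is designed to accomplish.
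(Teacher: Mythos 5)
Your proposal is correct and follows essentially the same route as the paper: compose $\Phi$ with $\pi$, apply Theorem~\ref{thm-proper} to the cartesian product $U^n$, and invoke the $S_n$-invariance of $\Phi\circ\pi$ to force the factor maps to coincide. The paper merely asserts that the symmetry relation forces $\varphi_1=\cdots=\varphi_n$ and that $\Phi\circ\pi$ is proper, whereas your write-up supplies both details --- the properness of $\pi|_{U^n}$ (via $S_n$-invariance of $U^n$), and the power-sum computation combined with the non-constancy of proper maps --- so your version is in fact more complete than the published one.
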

Therefore, the study of boundary behavior of proper maps between equidimensional symmetric products is reduced to the study of boundary behavior 
of maps of the form $\Sigma^n\psi$ as defined in \eqref{eq-sigmanpsi}.  Study of regularity of $\Sigma^n \psi$ is a major thrust of this paper.
For $k$ a non-negative integer or $\infty$, and a domain $\Omega\subset \cx^m$,  denote by 
$\mathcal{A}^k(\Omega)$ 
the space $\mathcal{O}(\Omega)\cap \mathcal{C}^k(\overline{\Omega})$ of holomorphic functions  on $\Omega$ which are $\mathcal{C}^k$-smooth up to the boundary. A map is of class
$\mathcal{A}^k(\Omega)$ if each component is. We prove the following result:
\begin{thm}\label{thm-akn}
 Let $k$ be a non-negative integer or $\infty$,  let $U, V$ be bounded domains in $\cx$ with rectifiable boundaries, and let $\varphi: U\to V$ be a holomorphic map such that 
$\varphi\in\mathcal{A}^{kn}(U)$. Then the induced map $\Sigma^n\varphi: \Sigma^n U\to \Sigma^n V$ is in $\mathcal{A}^k(\Sigma^n U)$.
Further, if $\varphi:U\to V$ is a holomorphic map which is not of class $\mathcal{A}^{kn}(U)$, then the induced map is not of class $\mathcal{A}^k(\Sigma^n U)$.
\end{thm}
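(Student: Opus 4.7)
My plan is to handle the two directions separately: the forward implication $\varphi\in\mathcal A^{kn}(U)\Rightarrow\Sigma^n\varphi\in\mathcal A^k(\Sigma^nU)$, and its contrapositive.

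For the forward direction, I would write the components $\Psi_j$ of $\Sigma^n\varphi$ (defined by $\Psi_j\circ\pi=\pi_j\circ(\varphi\times\cdots\times\varphi)$) through Newton's identities in terms of the power sums $p_l(w)=\sum_{i=1}^n\varphi(z_i(w))^l$, where $z_1(w),\dots,z_n(w)$ are the roots of $P_w(\zeta)=\zeta^n-w_1\zeta^{n-1}+\cdots+(-1)^n w_n$. For $w$ in the interior of $\Sigma^nU$ the residue theorem gives the integral representation
\[p_l(w)=\frac{1}{2\pi i}\oint_{\bd U}\varphi(\zeta)^l\,\frac{P_w'(\zeta)}{P_w(\zeta)}\,d\zeta.\]
Differentiating this in $w$ up to order $k$ introduces a kernel with poles of order as high as $n(k+1)$ at the roots of $P_w$; when $w$ approaches $\bd\Sigma^nU$ these poles migrate onto the contour $\bd U$. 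I would tame them by repeated integration by parts, transferring as many as $kn$ derivatives onto the factor $\varphi^l$. The hypothesis $\varphi\in\mathcal A^{kn}(U)$ permits such transfers, and the resulting integrand is continuous on $\overline{\Sigma^nU}\times\bd U$, yielding $\Psi_j\in\mathcal A^k(\Sigma^nU)$. An equivalent formulation is a Barban\c{c}on-type descent theorem: every $S_n$-symmetric holomorphic function of class $\mathcal A^{kn}$ on $\overline{U^n}$ factors through $\pi$ as a function of class $\mathcal A^k$ on $\overline{\Sigma^nU}$, applied here to the symmetric function $\pi_j\circ(\varphi\times\cdots\times\varphi)$.

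For the converse, I assume $\Sigma^n\varphi\in\mathcal A^k(\Sigma^nU)$ and fix $z_0\in\bd U$. I would study $\Sigma^n\varphi$ near the full-diagonal boundary point $w_0=\pi(z_0,\dots,z_0)$ in coordinates adapted to the ramification of $\pi$ at $(z_0,\dots,z_0)$. Setting $\zeta_i=z_i-z_0$, the identity $w_j-w_j^0=\sum_{l=1}^{j}\binom{n-l}{j-l}z_0^{j-l}\pi_l(\zeta)$ gives an upper-triangular invertible linear change of variables between $w-w_0$ and the weighted coordinates $\sigma_j=\pi_j(\zeta)$, with $\sigma_j$ vanishing to order $j$ in $|\zeta|$. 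A $C^k$ Taylor expansion of $\Sigma^n\varphi$ at $w_0$, pulled back by $\pi$, therefore carries information up to weighted degree $nk$ in $\zeta$. I would then match this expansion against
\[\sum_{i=1}^n\varphi(z_0+\zeta_i)=\sum_{m\ge 0}\frac{\varphi^{(m)}(z_0)}{m!}\,P_m(\zeta),\]
where $P_m$ is the $m$-th power sum of the $\zeta_i$'s and is by Newton a polynomial in the $\pi_l(\zeta)$ for $l\le m$. Reading off coefficients order by order in the weighted sense, one recovers the derivatives $\varphi^{(m)}(z_0)$ for every $m\le kn$ as continuous functions of $z_0$, giving $\varphi\in\mathcal A^{kn}(U)$.

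The main obstacle lies in the converse direction. A naive pullback via $\pi$ of an $\mathcal A^k$ function on $\Sigma^nU$ yields only an $\mathcal A^k$ function on $U^n$, and restriction along a single curve through $(z_0,\dots,z_0)$ recovers no more than $\varphi\in\mathcal A^k$. Capturing the full $n$-fold gain demands the weighted scaling $\sigma_j\sim|\zeta|^j$, i.e.\ that the Taylor remainder of $\Psi_j$ at $w_0$ — although only $o(|\zeta|^k)$ in a generic radial direction — is effectively of order $o(|\zeta|^{jk})$ along the $j$-th weighted direction; establishing this refined decay uses the $S_n$-invariance of the pullback together with Cauchy-type interior estimates for the holomorphic $\Psi_j$. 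In the forward direction the corresponding technical point is the integration-by-parts bookkeeping when $\bd U$ is only rectifiable, which needs care but is routine given the regularity of $\varphi$.
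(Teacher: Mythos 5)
Your starting point is the same integral representation the paper uses: writing the power sums $p_l$ via the Cauchy--type kernel $P'_w/P_w$ and passing to the elementary symmetric polynomials by Newton's identities (this is exactly the map $\mathfrak{P}$ and the formula \eqref{eq-psim-rep} in the paper). But from there the two arguments part ways, and yours has gaps on both sides.

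\textbf{Forward direction.} You propose to ``tame the poles by repeated integration by parts, transferring as many as $kn$ derivatives onto $\varphi^l$'' so that the integrand becomes continuous on $\overline{\Sigma^nU}\times\bd U$. This does not work beyond one step. The first transfer is special: the paper differentiates the log-derivative identity, $\partial_{s_j}(q'/q)=\partial_t(q_j/q)$, which converts one $s$-derivative into a single $t$-derivative and one integration by parts then puts it on $g$. After that the kernel is $q_j/q$ and no rational antiderivative in $t$ exists to iterate the process; any attempted antiderivative introduces $q'$ in the denominator, which vanishes exactly where the poles coalesce. Nor is ``continuous integrand'' the right goal: the integrand never becomes bounded (on the diagonal the kernel after $k$ $s$-derivatives still has a pole of order $kn$), yet the \emph{integral} can be controlled, because by residues it reduces to derivative evaluations. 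The genuine difficulty is proving that $\mathsf{J}(u)=\int_{\bd U} u(t)\,q(s;t)^{-m}\,dt$ is uniformly bounded on $\Sigma^n U$ as the poles of $q(s;\cdot)$ both migrate to $\bd U$ \emph{and} coalesce in arbitrary clusters. The paper's Lemma~\ref{lem-tm} does this by inducting over the stratification $\mathsf{V}_1,\dots,\mathsf{V}_n$ and controlling residues of the resulting higher-order poles. Your proposal does not engage with this at all; the Barban\c{c}on citation names the right phenomenon but does not supply a proof in the holomorphic-with-boundary-regularity setting, and the paper's Proposition~\ref{prop-am} is in effect a self-contained substitute for it.

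\textbf{Converse direction.} Your weighted-Taylor argument at the boundary point $w_0=\pi(z_0,\dots,z_0)$ is circular: writing $\sum_i\varphi(z_0+\zeta_i)=\sum_m\frac{\varphi^{(m)}(z_0)}{m!}P_m(\zeta)+\cdots$ and ``reading off coefficients'' presupposes that $\varphi^{(m)}(z_0)$ exists for $m\le kn$ at the boundary, which is precisely what must be proved. Even setting that aside, you correctly flag that the $C^k$ Taylor remainder of $\Psi_j$ is only $o(|s-s_0|^k)=o(|\zeta|^k)$ in generic directions, and your remedy (refined decay $o(|\zeta|^{jk})$ along weighted directions via ``Cauchy-type interior estimates'') is left unsubstantiated. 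The paper's converse is much shorter and avoids all of this: using the residue formula at an \emph{interior} diagonal point $s=\pi(w,\dots,w)$ with $w\in U$, one finds that the transverse derivative $\partial^\alpha\Psi_1(s)$ (take $\alpha=(0,\dots,0,k)$) equals a nonzero constant times $\varphi^{(kn)}(w)$. This is an exact identity on the interior diagonal, obtained by Cauchy's integral formula, not a Taylor expansion; if $\Sigma^n\varphi\in\mathcal A^k$ then $\partial^\alpha\Psi_1$ extends continuously to $\overline{\Sigma^nU}$, hence $\varphi^{(kn)}$ extends continuously to $\overline{U}$. The key observation you miss is that a transverse $s$-derivative of $\Psi_1$, restricted to the diagonal, already ``sees'' $kn$ derivatives of $\varphi$; no weighted expansion at the boundary is needed.
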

Note the  loss of smoothness from $kn$ times differentiability at the boundary
 to $k$ times differentiability at the boundary in going from $\varphi$ to $\Sigma^n\varphi$. This is not 
unexpected given the serious branching behavior of the symmetrization map $\pi:\cx^n\to \cx^n$.
It would be interesting to obtain precise information on the smoothness of the map $\Sigma^n\varphi$ when $\varphi$ is of class $\mathcal{A}^r(U)$ where
$r$ is not an integer  multiple of $n$, or for $\varphi$ in H\"{o}lder classes. Combining Theorem~\ref{thm-akn}  with classical results on boundary regularity of 
proper holomorphic maps in one variable, we obtain information on the boundary regularity of  proper maps between symmetric products, e.g.,
\begin{cor}\label{cor-bdryreg}
Let $U,V\Subset\cx$ be domains with $\mathcal{C}^\infty$-smooth boundaries, and let $\Phi:\Sigma^nU\to \Sigma^nV$ be a proper holomorphic map. Then
$\Phi$ extends as a $\mathcal{C}^\infty$-smooth map from $\Sigma^n\overline{U}$ to $\Sigma^n\overline{V}$.
\end{cor}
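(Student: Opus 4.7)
The proof would be a short deduction, combining two previously stated results with a classical fact from one-variable function theory.

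First, I would invoke Corollary~\ref{cor-proper} to reduce to the case of an induced map. Since $U, V \Subset \cx$ are bounded and $\Phi:\Sigma^n U \to \Sigma^n V$ is proper holomorphic, the corollary produces a proper holomorphic map $\varphi: U \to V$ with $\Phi = \Sigma^n \varphi$. Thus the boundary regularity of $\Phi$ is controlled by the boundary regularity of $\varphi$ via the induced-map construction.

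Second, I would appeal to the classical one-variable regularity theory for proper holomorphic maps between smoothly bounded planar domains. A proper holomorphic map $\varphi:U\to V$ between such domains is a finite branched covering, and by Carath\'{e}odory's theorem it extends to a continuous map of the closures; the Schwarz reflection principle (applied locally in boundary charts that flatten $\bd U$ and $\bd V$) then promotes this extension to a $\mathcal{C}^\infty$-smooth map $\overline{U}\to\overline{V}$. In particular $\varphi\in\mathcal{A}^{m}(U)$ for every non-negative integer $m$, and since smoothly bounded domains have rectifiable boundaries, the hypotheses of Theorem~\ref{thm-akn} are satisfied for every $k$ with $m=kn$.

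Third, I would apply Theorem~\ref{thm-akn} with $m=kn$ for each non-negative integer $k$: it gives $\Sigma^n\varphi\in \mathcal{A}^k(\Sigma^n U)$ for all $k$, i.e., $\Phi=\Sigma^n\varphi$ is $\mathcal{C}^\infty$ up to the boundary, which is exactly the statement that $\Phi$ extends as a $\mathcal{C}^\infty$-smooth map from $\Sigma^n\overline{U}$ to $\Sigma^n\overline{V}$ in the sense of the $\mathcal{A}^\infty$ class of the target.

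There is no substantial obstacle here, since all the analytic work has been done upstream: the reduction to a one-variable problem is provided by Corollary~\ref{cor-proper}, the one-variable smoothness is classical, and the passage back up to the symmetric product is precisely the content of Theorem~\ref{thm-akn}. The only point worth remarking on is that the factor-of-$n$ loss of smoothness in Theorem~\ref{thm-akn} is harmless for this corollary because $\mathcal{C}^\infty$ is stable under that loss; the same argument would produce only finite-order smoothness of $\Phi$ if $\bd U,\bd V$ were merely $\mathcal{C}^r$ for finite $r$.
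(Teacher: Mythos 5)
Your proposal is correct and follows exactly the same three-step route as the paper: reduce to an induced map via Corollary~\ref{cor-proper}, invoke classical one-variable boundary regularity of proper holomorphic maps between smoothly bounded planar domains, and then apply Theorem~\ref{thm-akn} for all $k$. The only small imprecision is the appeal to the Schwarz reflection principle, which in its classical form requires real-analytic boundary; for $\mathcal{C}^\infty$ (but not real-analytic) boundaries one instead uses the Kellogg--Warschawski theorem (or, as the paper notes, the Bell--Catlin/Diederich--Fornaess results specialized to dimension one), so this reference should be adjusted, but the argument is otherwise sound.
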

\subsection{General remarks}Topological symmetric products, under the name {\em unordered configuration spaces} have been studied extensively. 
One usually restricts attention to the \lq\lq{}unbranched part\rq\rq{} of such a space, i.e., one considers the quotient $(X^n\setminus \Delta)/S_n$,
where $\Delta$ consists of those points $(x_1,\dots, x_n)$ of $X^n$, where at least two of the coordinates $x_i$ and $x_j$ are the same. The fundamental
group of such spaces are an important subject of study under the name \lq\lq{}Braid Groups\rq\rq{} (see \cite{magnus}.)

Complex symmetric products have many applications in the theory of complex spaces,
their holomorphic mappings and correspondences (see, e.g., \cite{whitney,bedfordbell}.)
For a compact Riemann surface $C$, the points of $\Sigma^n C$ are in bijection with the  effective divisors on $C$ of degree $n$, and
the $n$-th Abel-Jacobi map maps $\Sigma^n C$ into the Jacobian variety of $C$ (birationally if $n$ is equal to the genus of $C$, see \cite[p. 18 ff.]{ac}.)

Denoting by  $\mathbb{D}$  the unit disc in the plane, the domain 
$\Sigma^n \mathbb{D}$ (under the name {\em symmetrized polydisc}, or for $n=2$, {\em symmetrized bidisc})
has been studied extensively.  It arises naturally in certain problems in control theory and the related spectral Nevanlinna-Pick interpolation
problem (see \cite{ay1}, and the subsequent work deriving from it.)  Subsequently, the domains
 $\Sigma^n \mathbb{D}$  have been studied from the point of 
complex geometry (see e.g. \cite{ay2,costara,costara2,misra, edi1,ediz1,ay3, gamma} etc.)  In view of Lempert's theorem on the identity of the
Carath\'{e}odory and Kobayashi metrics on convex domains,  the most striking feature is the identity of these 
metrics on  $\Sigma^2 \mathbb{D}$, though $\Sigma^2\mathbb{D}$ cannot be exhausted by an increasing sequence of
 domains biholomorphic to convex domains. This however seems to be  a special property of the symmetrized bidisc.
 
 Our work here  considers symmetric products of general planar domains, with particular reference to boundary 
 regularity, and generalizes  properties of symmetrized polydiscs.

The paper is organized as follows. In Section~\ref{sec-elem} below, we consider some general properties of symmetric products as a preliminary 
to the proof of the results stated above. We prove a formula (see \eqref{eq-ppsi}) for the induced map $\Sigma^n\varphi$ which plays a crucial role 
in the sequel, as well as study the branching behavior of the  symmetrization mapping  $\pi:\cx^n\to \cx^n$. Section~\ref{sec-cauchy} is devoted to 
obtaining some estimates in spaces of holomorphic functions smooth up to the boundary on an integral operator $\mathsf{J}$ (see \eqref{eq-amdef}.) This 
operator arises in the proof of Theorem~\ref{thm-akn}, and through Theorem~\ref{thm-akn} in the proof of other results of this paper.  The estimates are then used  in the following section to prove 
Theorem~\ref{thm-akn}. In Section~\ref{sec-thm-dbar},  Theorem~\ref{thm-dbar} is proved first for a model domain using a construction of a Stein 
neighborhood basis. Conformal mapping and a pullback argument gives the general result. Similarly, a pullback method is used to show that the boundary of
a symmetric product is not Lipschitz. The last section is devoted to the proof of Theorem~\ref{thm-proper} and deducing Corollaries~\ref{cor-proper} and \ref{cor-bdryreg}.

Starting with Section~\ref{sec-cauchy}, we use the \lq\lq{}Variable Constant\rq\rq{} convention, so that $C$ or $C_j$ may denote different constants at different appearances.
The convention is standard with inequalities and estimates, but here we use it in the context of equalities as well, for constants which depend only on the dimension and domain.
\subsection{Acknowledgements} We would like to thank our colleagues at Bangalore for many interesting discussions. 
We would also like to thank W{\l}odzimierz Zwonek and  Armen Edigarian for helpful information and {\L}ukasz Kosi\'{n}ski
for telling us about his result on the boundary geometry  of the bidisc  (Lemma~\ref{lem-kos} below.)
Debraj Chakrabarti would like to thank  Diganta Borah for his invitation to visit IISER Pune, and colleagues there for useful comments and  discussions on 
the subject of this paper. Sushil Gorai would like to thank Stat-Math Unit, Indian Statistical Institute, Bangalore Centre for providing a congenial 
atmosphere for research.

We would also like to express our special gratitude to the Dean of  TIFR-CAM, Prof. Mythily Ramaswamy for facilitating our collaboration 
by providing  chauffeured cars to travel between our  institutes. Without her support this paper would never have been written. 

\section{Elementary properties of symmetric products}\label{sec-elem}

\subsection{The symmetrization map}\label{sec-symmetrization} Let $S_n$ denote the symmetric group of bijections of $I_n=\{1,\dots,n\}$. Given any set $X$
we can define an action of $S_n$ on the $n$-fold cartesian product $X^n$ by setting 
\begin{equation}\label{eq-sigmax}\sigma(x_1,\dots, x_n)= (x_{\sigma(1)},\dots, x_{\sigma(n)}).\end{equation} Then the symmetric 
product $X^{(n)}$ may be identified with the quotient $X^n/S_n$ consisting of orbits of this action. It follows that in a category where quotients by finite groups exist, 
we can define symmetric products. It is known that in the category of {\em complex spaces} (ringed spaces locally modelled on complex analytic sets,
see \cite{guro,whitney}) such quotients by finite groups do indeed exist (see \cite{cartan1,cartan2}.)
Therefore the symmetric product of a domain in 
$\cx^k$ is in general a complex space with singularities.  For a profound study of symmetric products in the complex setting 
see \cite{whitney}, especially Appendix V.

If $U$ is a domain in $\cx^k$ clearly the symmetric product $U^{(n)}$ sits inside the symmetric product $(\cx^k)^{(n)}$ as an open set. 
Therefore, to construct $U^{(n)}$, it suffices to construct 
the space $(\cx^k)^{(n)}$ as a complex space. One way to do this  ( \cite{cartan1, cartan2} or \cite[Chaper 5, section 5]{whitney})
is to consider $(\cx^k)^{(n)}$ as an affine algebraic variety and imbed it in $\cx^N$ (for large $N$). Indeed, the coordinate ring of $(\cx^k)^{(n)}$ is nothing but the ring of those
polynomials in $n$ variables on $\cx^k$ which are left invariant by the action of $S_n$ described in the previous section.  When $k=1$,  this ring is 
the ring of symmetric polynomials in $n$ variables. It is well known that this ring is freely generated by the elementary symmetric polynomials.
We recall that the elementary symmetric polynomials  $\{\pi_k\}_{k=1}^n$ can be characterized by the fact that
\begin{equation}\label{eq-viete}(t-z_1)(t-z_2)\dots(t-z_n)= t^n-\pi_1(z)t^{n-1}+\dots+ (-1)^{n}\pi_n(z),\end{equation}
and  are given explicitly by
\[ \pi_k(z_1,\dots,z_n)= \sum_{\substack{J\subset I_n\\\abs{J}=k}}\left(\prod_{j\in J} z_j\right)\\
= \sum_{1\leq j_1 \leq \dots \leq j_k \leq n} z_{j_1}\dots z_{j_k}.
\]
In particular, the symmetrization map $\pi=(\pi_1,\dots, \pi_n):\cx^n\to\cx^n$, gives a concrete realization of the quotient map 
$\cx^n\to \cx^{(n)}=\cx^n/S_n$.  We note a number of elementary properties of the map $\pi$:
\begin{enumerate}[(a)]
\item For $s\in \cx^n$, a point $z=(z_1,\dots,z_n)\in\cx^n$ is in $\pi^{-1}(s)$, if and only if
$z_1,\dots, z_n$ are roots of the polynomial $q(s;t)$, where
\begin{equation}\label{eq-q} q(s;t)=t^n-s_1t^{n-1}+\dots +(-1)^ns_n.\end{equation}
This is an immediate consequence of \eqref{eq-viete}.

\item From the above, it follows that  the map $\pi$ is surjective, thanks to the fundamental theorem of algebra. 

\item We can define the biholomorphic map identifying $\cx^n$ with $\cx^{(n)}$ in the following way. For an $s\in \cx^n$, let $(z_1,\dots, z_n)$ be the roots of 
the polynomial $q$ of equation \eqref{eq-q}. Then the tuple $(z_1,\dots, z_n)$ has no natural order, and hence may be considered to be a point in the
symmetric product $\cx^{(n)}$. This defines a map $\psi:\cx^n\to \cx^{(n)}$, $s\mapsto (z_1,\dots,z_n)$. This is easily seen to be a set-theoretic bijection, 
and using the complex structure on $\cx^{(n)}$ one can show that this identification is biholomorphic
(see \cite{whitney} for details.)  Similarly, for any subset $U$ of $\cx^n$, the set $\Sigma^n U$ can be  identified with $U^{(n)}$.

\item The map $\pi$ is open, i.e., if $\omega\subset\cx^n$ is open, then so is $\pi(\omega)$. This is a consequence of the classical 
fact that roots of a  monic polynomial  with complex coefficients depend continuously on the coefficients (cf. \cite[ Appendix V (Sec. 4)]{whitney})

\item By the previous part, if $U\subset \cx$ is a domain in $\cx$, the set $\Sigma^n U=\pi(U^n)$ is a domain in $\cx^n$. Furthermore, $\Sigma^n U$ is 
pseudoconvex: if  $\psi$ is a subharmonic exhaustion function of the domain  $U$ (which always exists), the function $\Phi$ given on $\Sigma^n U$ by
$\Phi(s) = \sum\limits_{ j=1}^n \psi(z_j),$
where $(z_1,\dots, z_n)$ are the roots of $q(s;t)=0$ is well-defined and a \psh exhaustion of $\Sigma^n U$.

\item The map $\pi$ is proper, i.e., the inverse image of a compact set is compact under $\pi$. Indeed, if $\abs{s}<r$, and $\abs{t}> \max(\sqrt{n}r,1)$,
we have
\[ \abs{q(s;t)}> \abs{t}^n - \sqrt{n} \abs{s}\abs{t}^{n-1}>0.\]
Consequently, the inverse image $\pi^{-1}(B(0,r))$ is contained in the ball $B(0,\max(\sqrt{n}r,1))$, and therefore $\pi$ is proper.
\end{enumerate}

We note here that if $k\geq 2, n\geq 2$, the space $(\cx^k)^{(n)}$ is {\em not} a smooth complex manifold. For example, the complex space
$(\cx^2)^{(2)}$ may be biholomorphcally identified with the singular complex hypersurface in $\cx^5$ with equation
\[ 4\left( z_3z_5 -z_4^2\right) = z_2^2 z_3 - 2 z_1 z_2 z_4 + z_1^2 z_5.\]
For details see \cite[pp. 78-79, especially equation 4.18]{olver}.

\subsection{The canonical stratification}
\label{sec-strata}

By a {\em partition} of a set $I$, we mean a collection of subsets of $I$  which are pairwise disjoint and whose union is $I$. The {\em length} of a partition
is simply the number of subsets of $I$ in the partition. For any positive integer $m$, let $I_m$ denote the $m$-element set $\{1,\dots, m\}$. 
We denote by $\mathcal{P}^n_k$ the set of partitions of $I_n$ of length $k$.

For each partition $P=\{P_1,\dots,P_k\}\in\mathcal{P}^n_k$ of $I_n$  of length $k$, we define 
\begin{equation}\label{eq-sp}
 S(P)=\left\{z\in \cx^n\left\vert \begin{split}\text{ there are {\em distinct} points}\; w_1,\dots,w_k\in\cx\;\; \text{such that  }\quad \\ \text{  for each $\ell\in I_k$, and  for each $i\in P_\ell$, we have }\;\; z_i=w_\ell\end{split}\right.\right\}.
\end{equation}

If we also let
\[ \overline{S}(P)=\left\{z\in \cx^n\left\vert \begin{split}\text{ there are points}\; w_1,\dots,w_k\in\cx\;\; \text{such that   for }\;\;\\ \text{  each $\ell\in I_k$, and  for each $i\in P_\ell$, we have }\;\; z_i=w_\ell\end{split}\right.\right\},\]
(so that $w_1,\dots, w_k$ are no longer assumed distinct)  then clearly $\overline{S}(P)$ is a linear subspace of $\cx^n$ of dimension $k$ with 
linear coordinates $(w_1,\dots,w_k)$. Then $S(P)$ is a dense open subset of this linear space, and the complement $\overline{S}(P)\setminus S(P)$ 
is an analytic set defined by  $\prod\limits_{j<k} (w_j-w_k)=0$. Consequently $S(P)$ is a connected complex manifold of dimension $k$ (indeed a domain of dimension $k$.)  We also define 
 \[
 {\mathsf V}_k=\bigcup_{P\in \mathcal{P}_k^n}S(P),
\]
where clearly the union is disjoint.
Then ${\mathsf V}_k$ is precisely the set of points $z\in \cx^n$, whose coordinates $(z_1,\dots,z_n)$ take $k$ distinct numbers as values.
Note also that each $S(P)$ is open in ${\mathsf V}_k$, and in fact is  a connected component of ${\mathsf V}_k$. 
The following fact will be used later
\begin{lem} \label{lem-localbiholo}For each $k\in I_n$, the restriction $\pi|_{{\mathsf V}_k}$ is a local biholomorphism.
\end{lem}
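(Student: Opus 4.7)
The goal is to show that near every point of ${\mathsf V}_k$, the map $\pi$ is a biholomorphism onto a $k$-dimensional complex submanifold of $\cx^n$. Since ${\mathsf V}_k = \bigsqcup_{P \in \mathcal{P}_k^n} S(P)$ is a disjoint union of connected components, it suffices to fix a single partition $P = \{P_1,\dots,P_k\}$ with $m_\ell := |P_\ell|$ and handle $\pi|_{S(P)}$. The natural parametrization $\iota : \Omega_k \to S(P)$, where
\[
\Omega_k = \{w = (w_1,\dots,w_k) \in \cx^k : w_i \neq w_j \text{ for } i \neq j\}
\]
and $\iota(w)_i = w_\ell$ for $i \in P_\ell$, identifies $S(P)$ with the $k$-dimensional complex manifold $\Omega_k$. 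So the task reduces to showing that $\Psi := \pi \circ \iota$, which sends $w$ to the coefficient tuple of the polynomial $\prod_\ell (t - w_\ell)^{m_\ell}$, is a local biholomorphism onto its image in $\cx^n$.

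The main device is a contour-integral inverse to $\Psi$. Fix $w^0 \in \Omega_k$, set $s^0 := \Psi(w^0)$, and choose pairwise disjoint closed disks $\overline{D_\ell}$ centered at $w_\ell^0$. Since the coefficients of $q(s;t)$ depend holomorphically on $s$, Rouch\'e's theorem yields a neighborhood $W$ of $s^0$ in $\cx^n$ such that for every $s \in W$, the polynomial $q(s;\,\cdot\,)$ has exactly $m_\ell$ roots (counted with multiplicity) in $D_\ell$ and no roots in $\cx \setminus \bigcup_\ell \overline{D_\ell}$. For each $\ell$ I define
\[
\tilde w_\ell(s) \;=\; \frac{1}{m_\ell}\cdot\frac{1}{2\pi i}\oint_{\partial D_\ell} \frac{t\, q_t(s;t)}{q(s;t)}\, dt, \qquad s \in W,
\]
which by the argument principle equals the arithmetic mean (with multiplicities) of the $m_\ell$ roots of $q(s;\,\cdot\,)$ in $D_\ell$, and is holomorphic in $s$.

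The key verification is that $\tilde w \circ \Psi = \mathrm{id}$ on a neighborhood of $w^0$. Indeed, for $w$ close to $w^0$ one has $w_\ell \in D_\ell$, so $q(\Psi(w);t) = \prod_\ell (t - w_\ell)^{m_\ell}$ has $w_\ell$ as its only root in $D_\ell$ (with multiplicity $m_\ell$), forcing $\tilde w_\ell(\Psi(w)) = w_\ell$. Consequently $\Psi$ is injective near $w^0$, an immersion of rank $k$, and its image is locally the graph of a holomorphic map in the coordinates $\tilde w_1,\dots,\tilde w_k$; in particular the image is a $k$-dimensional complex submanifold of $\cx^n$ on which $\Psi$ is a biholomorphism with inverse the restriction of $\tilde w$. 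Running over $P \in \mathcal{P}_k^n$ delivers the claim for $\pi|_{{\mathsf V}_k}$.

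I do not anticipate a serious obstacle: the only nontrivial ingredient is the holomorphic dependence on $s$ of the "averaged roots in a prescribed disk", captured by the residue formula for $\tilde w_\ell$, which combines continuity of roots with the argument principle. An alternative route would be to verify directly that the polynomials $\partial q/\partial w_\ell = -m_\ell (t-w_\ell)^{m_\ell-1}\prod_{j\neq \ell}(t-w_j)^{m_j}$ are linearly independent in $t$ (because each vanishes at $w_j$ to order $m_j$ for $j \neq \ell$ but only to order $m_\ell - 1$ at $w_\ell$), giving the rank-$k$ Jacobian criterion; but the contour-integral method has the advantage of producing the local inverse explicitly.
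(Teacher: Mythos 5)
Your proof is correct and follows essentially the same strategy as the paper: both pass to the natural coordinates $(w_1,\dots,w_k)$ on a component $S(P)$, surround the distinct base values by pairwise disjoint discs, and build the local holomorphic inverse as the normalized contour integral $\frac{1}{2\pi i\, m_\ell}\oint_{\partial D_\ell} t\, q'(s;t)/q(s;t)\,dt$, verified by the residue/argument-principle computation. The paper phrases the target as the explicit inverse $\gamma:\pi(D)\to D$ of $\pi|_D$ rather than via the factorization $\Psi=\pi\circ\iota$ and the immersion criterion, but these are the same argument.
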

\begin{proof} Let $z^*\in {\mathsf V}_k$, and set $s^*=\pi(z^*)$. We need to find a neighborhood $D$ of $z^*$ in ${\mathsf V}_k$ such that $\pi|_D$ 
is a biholomorphism onto $\pi(D)$. To define $D$, first note that there is a partition $P\in \mathcal{P}^n_k$ such that  $z^*$  is in $S(P)$. Let $w_1,\dots,w_k$
be the natural coordinates on $S(P)$ and suppose that the natural coordinates of $z^*$ are $(w_1^*,\dots, w_k^*)$.  Since the points $w_1^*,\dots, w_k^*$ are
all distinct, we can find open discs $D_1,\dots, D_k$ in $\cx$, centered at $w_1^*,\dots,w_k^*$, such that their closures are pairwise disjoint.  We define the open subset
$D$ of $S(P)$ by setting
\[ D= \{z\in S(P)\colon \text{ for } j\in I_k, \text{ we have  } w_j\in D_j\}.\]
Then $D$ is an open neighborhood of the point $z^*$ in $S(P)$.  Let $G\subset\cx^n$ be given by
\[ G=\left\{s\in \cx^n\colon \text{ no root of $q(s;t)$ lies in $ \displaystyle{\bigcup\limits_{j=1}^k \bd D_j}$}\right\},\]
where $q$ is as in \eqref{eq-q}. Since the roots of a monic polynomial depend continuously on the coefficients, it follows that  the set $G$ is open. 
For  the partition $P=\{P_1,\dots, P_k\}$, and $j\in I_n$, there is  an $\mu(j)$ such that $j\in P_{\mu(j)}$.  For $s\in G$, define  for each $j\in I_n$,
\[ \gamma_j(s) = \frac{1}{2\pi i \abs{P_{\mu(j)}}} \int_{\bd D_{\mu(j)}} t \cdot\frac{q'(s;t)}{q(s;t)}dt,\]
where $q$ is as in \eqref{eq-q} and  $q'(s;t)$ is the partial derivative 
\begin{equation}\label{eq-qprime} \frac{\partial q}{\partial t}(s;t)=nt^{n-1}- (n-1)s_1 t^{n-2}+ \dots + (-1)^{n-1}s_{n-1}.\end{equation}
By differentiating with respect to $(\overline{s}_1,\dots,\overline{s}_n)$ under the integral sign, and noting that the integral on the right hand side is well defined if $s\in G$, we see that the tuple $\gamma=(\gamma_1,\dots,\gamma_n)$ defines a holomorphic map from $G$ to $\cx^n$.

Note that $\pi(D)\subset G$. For $s\in \pi(D)$, let $z\in D\subset S(P)$ be such that $\pi(z)=s$. For a given $j\in I_n$,
the integrand $ t \cdot\dfrac{q'(s;t)}{q(s;t)}$ has a pole at $t=z_j$ of order $\abs{P_{\mu(j)}}$ in the disc $D_{\mu(j)}$.  Applying the 
residue formula we see that $\gamma_j(s)=z_j$, i.e., $\gamma:\pi(D)\to D$ is the inverse of the map $\pi:D\to\pi(D)$. The result is proved.
\end{proof}
\begin{cor}\label{cor-v1}The mapping $\pi|_{{\mathsf V}_1}$ from ${\mathsf V}_1$ onto $\pi({\mathsf V}_1)$ is a biholomorphism.
\end{cor}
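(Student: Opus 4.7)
My plan is to combine the local biholomorphism statement of Lemma~\ref{lem-localbiholo} with a direct verification of global injectivity. First I would identify the structure of ${\mathsf V}_1$: there is exactly one partition of $I_n$ of length one, namely $P = \{I_n\}$, and the associated stratum $S(P)$ is the diagonal $\{(w,w,\dots,w) : w\in\cx\}$. Thus ${\mathsf V}_1$ coincides with this diagonal, a connected one-dimensional complex submanifold of $\cx^n$, and $\pi({\mathsf V}_1)$ is the (open) image of a local biholomorphism.

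Next I would upgrade ``local biholomorphism'' to ``biholomorphism onto the image'' by verifying that $\pi|_{{\mathsf V}_1}$ is injective. For this I would use property (a) from Section~\ref{sec-symmetrization}: the preimage $\pi^{-1}(s)$ consists precisely of those tuples whose entries, counted with multiplicity, are the roots of the polynomial $q(s;t)$ from \eqref{eq-q}. If $z=(w,\dots,w)$ and $z'=(w',\dots,w')$ are points of ${\mathsf V}_1$ satisfying $\pi(z)=\pi(z')=s$, then both $(t-w)^n$ and $(t-w')^n$ must equal $q(s;t)$, which forces $w=w'$ and hence $z=z'$. Combined with openness (automatic for local biholomorphisms), this yields that $\pi|_{{\mathsf V}_1}$ is a holomorphic bijection with holomorphic inverse, i.e.\ a biholomorphism onto $\pi({\mathsf V}_1)$.

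There is no real obstacle here; the corollary is essentially a bookkeeping consequence of Lemma~\ref{lem-localbiholo} together with the fact that on the single-value stratum ${\mathsf V}_1$ the fiber structure of $\pi$ degenerates to singletons, since the polynomial $q(s;t)$ forces all $n$ roots to coincide.
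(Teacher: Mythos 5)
Your proof is correct and follows essentially the same route as the paper: both reduce the claim to injectivity of $\pi$ on ${\mathsf V}_1$ via Lemma~\ref{lem-localbiholo}. The only cosmetic difference is that the paper reads off injectivity from the explicit formula $\pi_1(w,\dots,w)=nw$, whereas you invoke uniqueness of the factorization $q(s;t)=(t-w)^n$; both are immediate and equivalent.
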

\begin{proof}Thanks to Lemma~\ref{lem-localbiholo}, we only need to show that the mapping $\pi$ is injective on ${\mathsf V}_1$. The set ${\mathsf V}_1$ consists of points 
such that all the coordinates are equal. Denoting the natural coordinate on ${\mathsf V}_1$ as $w$, we see that 
the image of the point $(w,\dots,w)\in {\mathsf V}_1$ is $\left(nw, \dfrac{n(n-1)}{2}w^2,\dots, w^n\right)$, which immediately shows that $\pi$ is
injective on ${\mathsf V}_1$.
\end{proof}
\subsection{The induced map $\Sigma^n \varphi$}\label{sec-induced}
 Let $\varphi:X\to Y$ be a mapping of sets. Then we can naturally define a map $\varphi^n:X^n\to Y^n$ by setting
\[ \varphi^n(x_1,\dots, x_n) = (\varphi(x_1),\dots,\varphi(x_n)).\]
The map $\varphi^n$ is equivariant with respect to the action \eqref{eq-sigmax} of $S_n$, i.e. for each $\sigma\in S_n$,  thought of as acting on $X^n$ or $Y^n$,
we have $\sigma\circ \varphi^n=\varphi^n\circ \sigma$. Then clearly,  $\varphi^n$ maps the $S_n$-orbits of $X^n$ to the $S_n$-orbits of $Y^n$, and consequently
induces a map $\varphi^{(n)}:X^{(n)}\to Y^{(n)}$, which we will call the {\em $n$-fold symmetric power of the map $\varphi$.} It is clear that this construction is 
functorial, i.e., if $\varphi:X\to Y$ and $\psi:Y\to Z$ are given maps, then $(\psi\circ\varphi)^{(n)}=\psi^{(n)}\circ \varphi^{(n)}$ as maps from $X^{(n)}$ to $Z^{(n)}$.
When $X$ and $Y$ are subsets   $\cx$, and the symmetric products $X^{(n)}$  and $Y^{(n)}$ are identified with the sets 
$\Sigma^n X=\pi(X^n)\subset\cx^n$ and 
$\Sigma^n Y=\pi(Y^n)\subset\cx^n$, we denote the induced map $\varphi^{(n)}$  (identified with  a map from $\Sigma^nX$ to $\Sigma^n Y$ )
by $\Sigma^n\varphi$.  Consequently, $\Sigma^n\varphi$ is characterized by the equation \eqref{eq-sigmanpsi}, i.e., $(\Sigma^n\varphi)(\pi(z))= \pi (\varphi^n(z))$, or equivalently by the commutative diagram:
\[\begin{CD}
X^n @>\varphi^n>> Y^n\\
@V{\pi}VV  					@V{\pi}VV\\
\Sigma^n X @>\Sigma^n\varphi>> \Sigma^nY\\
\end{CD}\]
We first note the following:
\begin{lem}\label{lem-cont} In the above diagram, $\Sigma^n\varphi$ is continuous if and only if $\varphi$ is continuous.
\end{lem}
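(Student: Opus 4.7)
The plan is to exploit two topological features of the symmetrization map established earlier: $\pi$ is continuous and open (property (d) of Section~\ref{sec-symmetrization}), so it is a quotient map, and by Corollary~\ref{cor-v1} the restriction $\pi|_{{\mathsf V}_1}$ is a homeomorphism onto its image. These two facts give the two implications respectively.

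For the forward direction, assume $\varphi$ is continuous. Then $\varphi^n:X^n\to Y^n$ is continuous, and hence so is $\pi\circ\varphi^n:X^n\to \Sigma^nY$. By the commutative diagram, $\pi\circ\varphi^n = (\Sigma^n\varphi)\circ\pi$. Since $\pi:X^n\to \Sigma^nX$ is continuous, surjective and open, it is a topological quotient map, so any map out of $\Sigma^nX$ whose precomposition with $\pi$ is continuous is itself continuous. This yields the continuity of $\Sigma^n\varphi$. (Alternatively, for an open set $W\subset \Sigma^nY$, the preimage $(\Sigma^n\varphi)^{-1}(W)$ equals $\pi\bigl((\varphi^n)^{-1}(\pi^{-1}(W))\bigr)$, which is open because $\varphi^n$ is continuous and $\pi$ is open.)

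For the reverse direction, assume $\Sigma^n\varphi$ is continuous. Consider the diagonal map $\Delta_X: X\to X^n$ sending $x\mapsto (x,\ldots,x)$, and similarly $\Delta_Y$ for $Y$. Clearly $\Delta_X$ is continuous, and its image lies in ${\mathsf V}_1\cap X^n$; likewise for $\Delta_Y$. By Corollary~\ref{cor-v1}, the restriction of $\pi$ to ${\mathsf V}_1$ is a biholomorphism, hence a homeomorphism, onto $\pi({\mathsf V}_1)$. In particular, $\pi\circ\Delta_Y:Y\to \pi({\mathsf V}_1\cap Y^n)$ is a homeomorphism, with a continuous inverse $\rho$. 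Now the composition
\[
\Sigma^n\varphi\,\circ\,\pi\circ\Delta_X : X \longrightarrow \Sigma^n Y
\]
is continuous by hypothesis, and by the commutative diagram it equals $\pi\circ\varphi^n\circ\Delta_X = \pi\circ\Delta_Y\circ\varphi$, whose image lies in $\pi({\mathsf V}_1\cap Y^n)$. Applying the continuous inverse $\rho$ recovers $\varphi$, proving that $\varphi$ is continuous.

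I do not anticipate any serious obstacle; the only subtle point is to recognize that the diagonal copies of $X$ and $Y$ inside $\Sigma^nX$ and $\Sigma^nY$ are homeomorphic images (not merely continuous images) via $\pi$, which is exactly what Corollary~\ref{cor-v1} provides and which makes the reverse direction work.
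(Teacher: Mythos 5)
Your proof is correct and takes essentially the same route as the paper: the forward implication rests on the openness of $\pi$ (your explicit preimage computation is precisely what the paper writes out), and the reverse implication restricts to the diagonal $\pi({\mathsf V}_1\cap X^n)$ and invokes Corollary~\ref{cor-v1}, exactly as the paper does. The only cosmetic difference is that you package the restriction via the diagonal maps $\Delta_X,\Delta_Y$, where the paper writes the identical fact as $\varphi^n|_{{\mathsf V}_1\cap X^n}=\pi^{-1}\circ\psi\circ\bigl(\pi|_{{\mathsf V}_1\cap X^n}\bigr)$.
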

\begin{proof}Assume that $\varphi$ is continuous, and let $\omega$ be an open subset of $\Sigma^n Y$. We need to show that $(\Sigma^n\varphi)^{-1}(\omega)$ is 
open in $\Sigma^n X$. Let $\Omega$ be an open set in $\cx^n$ such that $\omega= \Omega\cap \Sigma^n X$. Then
\begin{align*}(\Sigma^n\varphi)^{-1}(\omega)&= (\Sigma^n\varphi)^{-1}(\Omega)\cap (\Sigma^n\varphi)^{-1}(Y)\\
&=\pi\left((\varphi^n\circ \pi)^{-1}(\Omega)\right)\cap \Sigma^n X.
\end{align*}
Since $\varphi^n \circ \pi$ is continuous and $\pi$ is open by (d) of Section~\ref{sec-symmetrization}, it follows that $(\Sigma^n\varphi)^{-1}(\omega)$ is 
open and therefore $ \Sigma^n\varphi$ is continuous.

Now suppose $ \Sigma^n\varphi$ is continuous.  Let $\psi$ be its restriction to $\pi({\mathsf V}_1\cap X^n)$.  The image of $\psi$ is contained in 
$\pi({\mathsf V}_1\cap Y^n)$. By Corollary~\ref{cor-v1}
the map $\pi$ induces a biholomorphism from ${\mathsf V}_1$ to $\pi({\mathsf V}_1)$, there we have 
\[ \varphi^n|_{{\mathsf V}_1\cap X^n}= \pi^{-1}\circ \psi\circ\left(\pi|_{{\mathsf V}_1\cap X^n}\right),\]
where $\pi^{-1}$ is the biholomolomorphic inverse of $\pi$ from $\pi({\mathsf V}_1)$ to ${\mathsf V}_1$. Since $\psi$ is continuous, so is 
$\varphi^n|_{{\mathsf V}_1\cap X^n}$. But this last map is given as $(z,\dots,z)\mapsto (\varphi(z),\dots,\varphi(z))$ which is continuous if and only if $\varphi$
is continuous.\end{proof}
\begin{prop}\label{prop-resrep} There is a polynomial automorphism ${\mathfrak{P}}$ of $\cx^n$ with the following property. 
If $U\subset \cx$ is a domain with smooth
boundary, and $\varphi:U\to V$ is a holomorphic map into a bounded domain $V\subset \cx$, then
\begin{equation}\label{eq-ppsi} \Sigma^n \varphi = {\mathfrak{P}}\circ \Psi,\end{equation}
where $\Psi:\Sigma^n U\to \cx^n$ is the map whose $m$-th component function is given,  for $m\in I_n$, by
\begin{equation}\label{eq-psim-rep}
\Psi_m(s)= \frac{1}{2\pi i}\int_{\bd{U}}
(\varphi(t))^m\frac{q'(s;t)}{q(s;t)} dt,
\end{equation}
where $s\in \Sigma^n U$, $q(s;t)$ is as in \eqref{eq-q} and $q'(s;t)$ denotes the partial derivative $\dfrac{\partial q}{\partial t}(s;t)$ given by 
\eqref{eq-qprime}.
\end{prop}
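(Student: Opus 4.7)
The plan is to recognize the integrand in \eqref{eq-psim-rep} as producing, via the Cauchy residue theorem, the $m$-th Newton power sum of the values of $\varphi$ at the roots of $q(s;\,\cdot\,)$, and then to take $\mathfrak{P}$ to be the polynomial automorphism of $\cx^n$ that converts power sums into elementary symmetric polynomials.

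Fix $s\in\Sigma^n U$ and choose $z=(z_1,\ldots,z_n)\in U^n$ with $\pi(z)=s$; by property (a) of Section~\ref{sec-symmetrization} the roots of $q(s;t)$, listed with multiplicity, are precisely $z_1,\ldots,z_n$, and they all lie in $U$. Thus $q(s;\,\cdot\,)$ has no zeros on the smooth curve $\bd U$, the logarithmic derivative $q'(s;t)/q(s;t)=\sum_{j=1}^n (t-z_j)^{-1}$ is holomorphic in a neighborhood of $\bd U$, and the integral in \eqref{eq-psim-rep} is defined. Applying the Cauchy integral formula pole by pole, with the usual bookkeeping for multiplicities, gives
\[
\Psi_m(s) \;=\; \sum_{j=1}^n \varphi(z_j)^m \;=\; p_m\bigl(\varphi(z_1),\ldots,\varphi(z_n)\bigr),
\]
where $p_m(w)=w_1^m+\cdots+w_n^m$ denotes the $m$-th power-sum polynomial. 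In particular $\Psi_m$ depends symmetrically on $(z_1,\ldots,z_n)$, so it is a well-defined function of $s=\pi(z)$, and the integral representation shows it is holomorphic on $\Sigma^n U$.

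Newton's identities express each elementary symmetric polynomial $\pi_k$ as a polynomial in $p_1,\ldots,p_k$ and conversely; hence the polynomial map $p=(p_1,\ldots,p_n)\colon\cx^n\to\cx^n$ is a polynomial automorphism of $\cx^n$, and there is a unique polynomial automorphism $\mathfrak{P}$ of $\cx^n$ (depending only on $n$) satisfying $\mathfrak{P}\circ p=\pi$. Combining the steps,
\[
\mathfrak{P}(\Psi(s)) \;=\; \mathfrak{P}\bigl(p(\varphi^n(z))\bigr) \;=\; \pi(\varphi^n(z)) \;=\; (\Sigma^n\varphi)(\pi(z)) \;=\; (\Sigma^n\varphi)(s),
\]
where the next-to-last equality is the defining commutative square for $\Sigma^n\varphi$ in Section~\ref{sec-induced}. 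The only step requiring a bit of care is the bookkeeping in the residue computation when some of the $z_j$ coincide, but the identity $\frac{1}{2\pi i}\int_{\bd U}(t-a)^{-1}f(t)\,dt=f(a)$ is insensitive to how many of the $z_j$ equal $a$, so the sum $\sum_j f(z_j)$ always emerges with the correct multiplicity. Smoothness of $\bd U$ enters only in giving a well-defined contour of integration and could in fact be relaxed to rectifiability.
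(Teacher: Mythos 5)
Your proof is correct and follows the same essential strategy as the paper's: evaluate $\Psi_m$ by contour integration to recognize it as the $m$-th power sum $\sum_j\varphi(z_j)^m$, and take $\mathfrak{P}$ to be the polynomial automorphism with $\pi=\mathfrak{P}\circ\tau$ supplied by Newton's identities. The one place you genuinely diverge --- and tighten --- is the residue computation: by writing $q'(s;t)/q(s;t)=\sum_{j=1}^n(t-z_j)^{-1}$, an identity valid with multiplicities, you obtain $\Psi_m(s)=\sum_j\varphi(z_j)^m$ directly for every $s\in\Sigma^n U$, whereas the paper first assumes $s\in\pi({\mathsf V}_n)$ so that the poles are simple, and then extends the identity $\Sigma^n\varphi=\mathfrak{P}\circ\Psi$ from the generic stratum to all of $\Sigma^n U$ by a separate continuity argument (invoking Lemma~\ref{lem-cont} for continuity of $\Sigma^n\varphi$ and differentiation under the integral sign for holomorphy of $\Psi$). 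Your uniform treatment of multiplicities makes that extension step unnecessary, at the cost of nothing; both arguments of course share the tacit requirement that $\varphi$ has boundary values along $\bd U$ so that \eqref{eq-psim-rep} is meaningful.
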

\begin{proof}
Define the {\em power-sum map} $\tau:\cx^n\to \cx^n$, 
where for $j\in I_n$, the component $\tau_j$ is the $j$-th {\em power-sum polynomial}:
\[ \tau_j(z) = \sum_{\ell=1}^n (z_\ell)^j.\]
We claim that there exists a polynomial automorphism $\mathfrak{P}:\cx^n\to \cx^n$ such that
\begin{equation}\label{eq-ptau} \pi=\mathfrak{P}\circ\tau.\end{equation}
It is known from classical algebra (cf. \cite[pp. 312 ff.]{kurosh})  that the either of the collections $\{\tau_j\}_{j=1}^n$  or $\{\pi_j\}_{j=1}^n$ 
freely generates the ring of symmetric polynomials  in $n$ variables. 
It follows that for $ j \in I_n$, there are polynomials ${\mathfrak P}_j, {\mathfrak Q}_j$ in $n$ variables such that
$ \pi_j = {\mathfrak P}_j(\tau_1, \dots, \tau_n)$
and
$ \tau_j = {\mathfrak Q}_j(\pi_1,\dots,\pi_n).$
It is possible to write down the polynomials ${\mathfrak P}_j, {\mathfrak Q}_j$
explicitly using the Newton identities for symmetric polynomials  (cf. \cite[p. 323]{kurosh}.)
Define polynomial self-maps $\mathfrak{P}$ and $\mathfrak{Q}$ of $\cx^n$   by setting ${\mathfrak P}=({\mathfrak P}_1,\dots, {\mathfrak P}_n)$
and ${\mathfrak Q}=({\mathfrak Q}_1,\dots,{\mathfrak Q}_n)$.
Then  \eqref{eq-ptau} holds and ${\mathfrak P}\circ {\mathfrak Q}$
is the identity on $\cx^n$, and therefore ${\mathfrak P}$ is a polynomial automorphism of $\cx^n$. 

Recall that ${\mathsf V}_n\subset\cx^n$ is the set of points whose coordinates are $n$ distinct complex numbers,  and assume that $s$ is a point in 
$\pi({\mathsf V}_n)\cap \Sigma^n U$, i.e., the roots of $q(s;t)=0$ are $n$ distinct points $z_1,\dots, z_n$ in $U$. Then the integrand in \eqref{eq-psim-rep}
is a meromorphic function  on $U$ with simple poles at the points $t=z_1,\dots, t=z_n$. We can evaluate the integral using the residue theorem, which leads 
to the representation
\begin{align*} \Psi_m(s)& = \sum_{j=1}^n \left(\varphi(z_j)\right)^m\\
&= \tau_m(\varphi(z_1),\dots,\varphi(z_n))\\
&=\tau_m(\varphi^n(z)),
\end{align*}
so that we have
\[ \Psi(s) =\tau(\varphi(z_1),\dots, \varphi(z_n))= \tau(\varphi^n(z)).\]
Therefore, with $\mathfrak{P}$ as in \eqref{eq-ptau} we have
\begin{align*} {\mathfrak P}(\Psi(s))&= {\mathfrak P}(\tau(\varphi(z_1),\dots, \varphi(z_n))\\
&= \pi(\varphi(z_1),\dots, \varphi(z_n))\\
&= \left(\Sigma^n\varphi\right)(s),
\end{align*}
where we have used  the relation \eqref{eq-sigmanpsi} defining $\Sigma^n\varphi$. Therefore \eqref{eq-ppsi} is established if
$s\in \pi({\mathsf V}_n)\cap \Sigma^n U$.  To complete the proof we note that by Lemma~\ref{lem-cont}, $\Sigma^n\varphi$ is a continuous mapping 
from $\Sigma^n U$ to $\Sigma^n V$.
We also claim that the map $\Psi:\Sigma^n U\to \cx^n$ is holomorphic. Indeed, for any $j\in I_n$, by differentiation under the integral sign we have
\begin{equation}\label{eq-psimholo}  \frac{\partial}{\partial\overline{s}_j}\Psi_m(s) = \frac{1}{2\pi i} \int_{\bd U} (\varphi(t))^m \frac{\partial}{\partial\overline{s}_j}\left( \frac{q'(s;t)}{q(s;t)}\right)dt =0. \end{equation}
Therefore, in the equation \eqref{eq-ppsi} we see that both sides extend continuously to the complement of ${\mathsf V}_n$, and the result follows.
\end{proof}
\begin{cor}\label{cor-holo} Let $\varphi:U\to V$ be a mapping of planar domains. Then the induced map $\Sigma^n \varphi:\Sigma^n U \to \Sigma^k V$ is 
 holomorphic if and only if $\varphi$ is holomorphic.
\end{cor}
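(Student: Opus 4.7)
The plan is to prove both directions by exploiting the good behavior of $\pi$ on the canonical stratification from Section~\ref{sec-strata}, particularly on the top stratum ${\mathsf V}_n$ and bottom stratum ${\mathsf V}_1$.

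For the forward direction, suppose $\varphi$ is holomorphic. By Lemma~\ref{lem-cont}, $\Sigma^n\varphi$ is continuous on $\Sigma^n U$. Consider the open subset $\pi({\mathsf V}_n\cap U^n)\subset\Sigma^nU$; on the preimage ${\mathsf V}_n\cap U^n$ the map $\pi$ is a local biholomorphism by Lemma~\ref{lem-localbiholo}. Hence in a neighborhood of any point of $\pi({\mathsf V}_n\cap U^n)$ we can locally write $\Sigma^n\varphi = \pi\circ\varphi^n\circ(\pi|_{{\mathsf V}_n\cap U^n})^{-1}$, which is a composition of holomorphic maps and therefore holomorphic.

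To extend to all of $\Sigma^n U$, observe that $\Sigma^n U\setminus\pi({\mathsf V}_n\cap U^n)$ is contained in the zero set of the discriminant of the polynomial $q(s;t)$ from \eqref{eq-q}, which is a proper complex analytic subset (a hypersurface). Since $\Sigma^n\varphi$ is continuous on $\Sigma^nU$ (hence locally bounded), and holomorphic off this analytic set, Riemann's removable singularities theorem delivers that $\Sigma^n\varphi$ is holomorphic on all of $\Sigma^n U$.

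For the reverse direction, suppose $\Sigma^n\varphi$ is holomorphic. By Corollary~\ref{cor-v1}, $\pi$ restricts to a biholomorphism from ${\mathsf V}_1$ onto $\pi({\mathsf V}_1)$, so $\pi|_{{\mathsf V}_1\cap U^n}:{\mathsf V}_1\cap U^n\to\pi({\mathsf V}_1\cap U^n)$ has holomorphic inverse. As in the proof of Lemma~\ref{lem-cont}, the commuting diagram gives
\[
\varphi^n|_{{\mathsf V}_1\cap U^n} = \bigl(\pi|_{{\mathsf V}_1}\bigr)^{-1}\circ\bigl(\Sigma^n\varphi\bigr)|_{\pi({\mathsf V}_1\cap U^n)}\circ\pi|_{{\mathsf V}_1\cap U^n},
\]
which is a composition of holomorphic maps, hence holomorphic. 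Under the natural coordinate $w$ on ${\mathsf V}_1$, this map is $w\mapsto(\varphi(w),\dots,\varphi(w))$, so its holomorphicity is equivalent to the holomorphicity of $\varphi$.

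The only subtle point is the removable-singularities step in the forward direction; everything else is a formal transfer through $\pi$ on the open strata where $\pi$ has a local (or global) holomorphic inverse. No estimates or serious analysis are required beyond the continuity already established in Lemma~\ref{lem-cont}.
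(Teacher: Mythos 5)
Your proof is correct. The reverse direction coincides with the paper's own argument: restrict $\Sigma^n\varphi$ to $\pi({\mathsf V}_1)$, use Corollary~\ref{cor-v1} to transfer holomorphy back to $\varphi$ via the natural coordinate $w$. The forward direction, however, is genuinely different. The paper invokes Proposition~\ref{prop-resrep} (the integral representation $\Sigma^n\varphi = \mathfrak{P}\circ\Psi$, with $\Psi$ given by the Cauchy-type contour integral \eqref{eq-psim-rep}), establishes holomorphy by differentiation under the integral sign, and, for general $U$, runs an exhaustion by smoothly bounded subdomains $U_\nu\nearrow U$. You instead observe that on $\pi({\mathsf V}_n\cap U^n)$ the map $\pi$ is a local biholomorphism by Lemma~\ref{lem-localbiholo}, so that locally $\Sigma^n\varphi = \pi\circ\varphi^n\circ(\pi|_{{\mathsf V}_n\cap U^n})^{-1}$ is plainly holomorphic; you then extend across the discriminant hypersurface $\{\operatorname{disc}_t q(s;t)=0\}\cap\Sigma^n U$ (which is exactly $\Sigma^n U\setminus\pi({\mathsf V}_n\cap U^n)$, since fibers of $\pi$ are $S_n$-orbits and hence lie entirely within a single stratum) via Riemann's removable singularity theorem, using the continuity from Lemma~\ref{lem-cont} for local boundedness. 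Your route is more elementary: it needs no integral representation, no smoothness of $\bd U$, and no exhaustion argument, relying only on the branched-covering structure of $\pi$ already established in the stratification lemmas. The paper's route buys something else — the integral formula \eqref{eq-psim-rep} is the workhorse for the boundary regularity Theorem~\ref{thm-akn}, so proving holomorphy as a by-product of that machinery is efficient in context, but your self-contained argument is a valid and arguably cleaner alternative for this particular corollary.
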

\begin{proof} First assume that $\varphi$ is holomorphic. For smoothly bounded $U$ it was proved in course of the proof of Proposition~\ref{prop-resrep} that 
$\Sigma^n\varphi$ is holomorphic. Otherwise, let $\{U_\nu\}_\nu$ be a family of bounded domains  with smooth boundary such that $U_\nu\subset U_{\nu+1}$ and $\bigcup U_\nu = U$.  Clearly
$\Sigma^n (\varphi|_{U_\nu})= (\Sigma^n \varphi)|_{\Sigma^n U_\nu}$, and by the previous proposition, we can write 
$\Sigma^n (\varphi|_{U_\nu})=\mathfrak{P}\circ \Psi$, with $\Psi$ as defined by \eqref{eq-psim-rep} with $U$ replaced by $U_\nu$. Further, by differentiation under the integral sign as in \eqref{eq-psimholo} we see that $\Psi$ and therefore $\Sigma^n (\varphi|_{U_\nu})= (\Sigma^n \varphi)|_{\Sigma^n U_\nu}$ is
holomorphic. Since this is true for each $\nu$ the result follows.

If $\Sigma^n\varphi$ is holomorphic, so is its restriction to the one-dimensional  complex submanifold $\pi({\mathsf V}_1)\cap\Sigma^n U$.
But using the biholomorphism $\pi|_{{\mathsf V}_1}$, we see that this induces a holomorphic map from ${\mathsf V}_1\cap U^n$ to
${\mathsf V}_1\cap V^n$. Using the natural coordinate $w$ on ${\mathsf V}_1$ we see that $\varphi$ is itself holomorphic.
\end{proof}

\section{A Cauchy-type integral}\label{sec-cauchy}
Let $U$ be a  bounded domain in the complex plane with rectifiable boundary, so that we have the Cauchy theory at our disposal.
For a positive integer $m$ we consider the function ${\mathsf J}u$ 
on $\Sigma^nU$ defined by
\begin{equation}\label{eq-amdef} {\mathsf J}u(s)= \int_{\bd U} \frac{u(t)}{q(s;t)^m}dt\end{equation}
where  $q$ is as in \eqref{eq-q}.
(Of course ${\mathsf J}$ depends on $m,n$ but for simplicity we suppress this from the notation.) Clearly ${\mathsf J}$ is a linear operator from $\mathcal{C}(\bd U)$ to $\mathcal{O}(\Sigma^n U)$, since the kernel $q(s;t)$ is holomorphic in the 
parameter $s\in U^n$.

For a domain $U\subset \cx$ with smooth boundary, and an integer $k\geq 0$,  let $\mathcal{A}^k(U)$  denote the space of functions 
 $\mathcal{C}^k(\overline{U})\cap \mathcal{O}(U)$, i.e. functions which are $k$ times continuously differentiable on $\overline{U}$ which  are holomorphic in $U$.  We note that $\mathcal{A}^{k}(U)$ 
is a closed subspace of the Banach space $\mathcal{C}^k(\overline{U})$, and therefore a Banach space with the norm of $\mathcal{C}^k(\overline{U})$.
Let $\mathcal{A}(\Sigma^nU)$ denote the subspace of the space  $\mathcal{C}(\overline{\Sigma^n U})$ of continuous functions on
$\overline{\Sigma^nU}$ which are holomorphic on $\Sigma^n U$.

The following result will be used in the proof of Theorems~\ref{thm-akn} and \ref{thm-dbar}.
\begin{prop}\label{prop-am} For $m\geq 1$, the operator ${\mathsf J}$ is bounded from $\mathcal{A}^{mn-1}(U)$ to $\mathcal{A}(\Sigma^nU)$.
If $u$ is a holomorphic function on $U$ which is not in $\mathcal{A}^{mn-1}(U)$, then ${\mathsf J}u$ is not
in $\mathcal{A}(\Sigma^nU)$. 
\end{prop}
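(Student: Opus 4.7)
The approach is to identify $\mathsf{J}u(s)$ with the confluent (Hermite) divided difference of $u$ at the $n$ roots $z_1(s),\dots,z_n(s)$ of $q(s;\,\cdot\,)$, each counted with multiplicity $m$, and then to invoke the standard regularity theory for such divided differences.

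On the open dense subset $\pi(\mathsf{V}_n)\cap\Sigma^nU$ of $\Sigma^nU$ where the $n$ roots are distinct points of $U$, the integrand in \eqref{eq-amdef} has $n$ poles of order $m$ inside $\bd U$, and the residue theorem gives
\[ \mathsf{J}u(s) \;=\; 2\pi i\cdot u\bigl[\underbrace{z_1,\dots,z_1}_{m},\dots,\underbrace{z_n,\dots,z_n}_{m}\bigr], \]
which is exactly the Cauchy-type representation of the Hermite divided difference of order $mn-1$. To extend this to a continuous bounded function on $\overline{\Sigma^nU}$, I would invoke the Hermite-Genocchi formula, expressing such a divided difference as an integral of $u^{(mn-1)}$ over an $(mn-1)$-simplex in the arguments, provided $u^{(mn-1)}$ is defined on their convex hull. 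Since $\overline{U}$ need not be convex, first apply Whitney's extension theorem to produce a compactly supported $\tilde u\in\mathcal{C}^{mn-1}(\cx)$ with $\|\tilde u\|_{\mathcal{C}^{mn-1}(\cx)}\le C\,\|u\|_{\mathcal{A}^{mn-1}(U)}$; the needed Whitney compatibility of the jet $(u,u',\dots,u^{(mn-1)})$ on $\overline{U}$ is automatic for the holomorphic function $u$ by Taylor's theorem. Applying Hermite-Genocchi to $\tilde u$ with arguments $(z_1,\dots,z_1,\dots,z_n,\dots,z_n)$ yields a symmetric function of $(z_1,\dots,z_n)\in\cx^n$, continuous and pointwise bounded by $C\,\|u\|_{\mathcal{A}^{mn-1}(U)}$, which descends through $\pi$ to a continuous bounded function on $\overline{\Sigma^nU}$ agreeing with $\mathsf{J}u/(2\pi i)$ on the dense open set $\pi(\mathsf{V}_n)\cap\Sigma^nU$. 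This simultaneously provides the continuous extension and the required norm bound.

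For the converse, restrict $\mathsf{J}u$ to the image of the small diagonal $\pi(\mathsf{V}_1)\cap\Sigma^nU$: if $s=\pi(w,\dots,w)$ with $w\in U$, then $q(s;t)=(t-w)^n$ and the Cauchy formula for derivatives yields
\[ \mathsf{J}u(s) \;=\; \int_{\bd U}\frac{u(t)}{(t-w)^{mn}}\,dt \;=\; \frac{2\pi i}{(mn-1)!}\,u^{(mn-1)}(w). \]
If $\mathsf{J}u\in\mathcal{A}(\Sigma^nU)$, composing with the biholomorphism of Corollary~\ref{cor-v1} would force $u^{(mn-1)}\in\mathcal{C}(\overline{U})$, and iterated integration of this continuous function along paths in $\overline{U}$ would in turn continuously extend each lower derivative to $\overline{U}$, placing $u$ in $\mathcal{A}^{mn-1}(U)$.

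The principal technical obstacle is the combined use of Hermite-Genocchi and Whitney extension: the convex hull of the roots typically escapes $\overline{U}$, so an extension of $u$ off $\overline{U}$ is genuinely needed; two different Whitney extensions yield a priori different Hermite-Genocchi integrands, but they must produce the same continuous extension on $\overline{\Sigma^nU}$ by the uniqueness of continuous extensions from the dense subset $\Sigma^nU$.
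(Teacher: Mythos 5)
Your converse argument (restricting to the diagonal $\pi(\mathsf{V}_1)$ and reading off $u^{(mn-1)}$) matches the paper's almost verbatim and is correct. The forward direction, however, has a genuine gap in the combination of Whitney extension with Hermite--Genocchi.

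The Hermite--Genocchi formula, in the form you invoke with a complex $(mn-1)$-st derivative of $\tilde u$ in the integrand, is a \emph{holomorphic} identity: it holds when the function is holomorphic on the convex hull of the nodes, so that $d f = f'(z)\,dz$ along each segment. A Whitney extension $\tilde u\in\mathcal{C}^{mn-1}(\cx)$ agrees with $u$ (to order $mn-1$) on $\overline{U}$ but satisfies $\dbar\tilde u\not\equiv 0$ off $\overline{U}$. Consequently, along a segment $[\,x_0,x_1]$ that leaves $\overline{U}$ one has $\tilde u(x_1)-\tilde u(x_0)=\int(\partial_z\tilde u\,dz+\partial_{\bar z}\tilde u\,d\bar z)$, and already for $N=1$ the Hermite--Genocchi integral $\int_0^1\partial_z\tilde u\bigl(tx_1+(1-t)x_0\bigr)\,dt$ fails to reproduce the divided difference $(\tilde u(x_1)-\tilde u(x_0))/(x_1-x_0)$; the discrepancy is exactly the $\bar z$-contribution. (Try $\tilde f(z)=z\bar z$ with $x_0=0$, $x_1=1+i$: the divided difference is $1-i$, the Hermite--Genocchi integral is $(1-i)/2$.) So the simplex integral you write down does \emph{not} agree with $\mathsf{J}u/(2\pi i)$ on the dense open set $\pi(\mathsf{V}_n)\cap\Sigma^nU$ whenever the convex hull of the roots escapes $\overline{U}$, and therefore it cannot serve as the continuous extension you need. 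Your closing remark that any two Whitney extensions must yield the same answer ``by uniqueness of the continuous extension'' presupposes exactly the agreement with $\mathsf{J}u$ that fails here.

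The paper avoids this obstruction entirely: it never leaves $\overline{U}$. Working with the lifted operator $T_n$ on $U^n$, it evaluates the contour integral by residues at the roots (this only requires $u\in\mathcal{A}^{mn-1}(U)$ and rectifiable $\bd U$), obtains an expression involving derivatives of $u$ at the $v_j$ divided by powers of the gaps $|v_j-v_l|$, and bounds it by an induction up the stratification $\mathsf{V}_1\subset\mathsf{V}_1\cup\mathsf{V}_2\subset\cdots$ (Lemma~\ref{lem-tm}); boundedness of $\mathsf{J}$ then follows by the closed graph theorem, and continuity up to $\overline{\Sigma^nU}$ is obtained by approximating $u$ by $\mathcal{O}(\overline{U})$ via Mergelyan and using the Lipschitz estimate of Lemma~\ref{lem-lip}. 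If you want to keep the divided-difference viewpoint, you would need a path-based analogue of Hermite--Genocchi adapted to the geometry of $\overline{U}$, not the straight-segment version; the residue computation in the paper is, in effect, exactly such a replacement.
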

The proof will be in several steps.  First, let
 $\mathcal{O}_{\rm sym}(U^n)$ be the space of holomorphic functions on $U^n$ which are symmetric in the variables $(z_1,\dots,z_n)$.
Define a linear operator from $\mathcal{C}(\bd U)$ to $\mathcal{O}_{\rm sym}(U^n)$  by setting 
\begin{equation}\label{eq-tm}
(T_nu)(z)=\int_{\bd U}\frac{u(t)}{\prod\limits_{j=1}^n(t-z_j)^m} dt.
\end{equation}
(Note that the subscript in $T_n$ refers to the dimension of the product $U^n$.)
The kernel function $\prod\limits_{j=1}^n(t-z_j)^{-m}$ is holomorphic in the variable $z\in U^n$ and symmetric in the variables $(z_1,\dots,z_n)$, 
so $T_nu\in \mathcal{O}_{\rm sym}(U^n).$   Let $H^\infty_{\rm sym}(U^n)\subset \mathcal{O}_{\rm sym}(U^n)$ be the Banach space of bounded holomorphic functions on $U^n$  symmetric with respect to the variables $(z_1,\dots,z_n)$ 
with the supremum norm. We first prove the following:
\begin{lem} \label{lem-tm}The operator $T_n$ is bounded from $\mathcal{A}^{mn-1}(U)$ to $H^\infty_{\rm sym} (U^n).$ \end{lem}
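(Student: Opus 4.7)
The plan is to identify $(T_n u)(z)$ with a confluent divided difference of $u$, and then bound it via a Hermite--Genocchi type estimate. First I would fix $z\in U^n$ with the $z_j$ pairwise distinct, and let $P(t)$ be the Hermite interpolating polynomial of $u$ of degree $\leq mn-1$ at the nodes $z_1,\dots,z_n$ each taken with multiplicity $m$. Since $u-P$ vanishes to order $\geq m$ at every $z_j$, the quotient $(u-P)(t)/\prod_j(t-z_j)^m$ extends across the apparent singularities to a function holomorphic in $U$ and of class $\mathcal{C}^{mn-1}$ on $\overline{U}$; Cauchy's theorem then gives
\[
\int_{\bd U}\frac{u(t)-P(t)}{\prod_j(t-z_j)^m}\,dt=0,
\]
so that $(T_n u)(z)=\int_{\bd U}P(t)/\prod_j(t-z_j)^m\,dt$. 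Since $\deg P\leq mn-1$ while the denominator has degree $mn$, the integrand is $O(|t|^{-1})$ at infinity; deforming the contour to a large circle and computing the residue at infinity I would conclude $(T_n u)(z)=2\pi i\,a$, where $a$ is the coefficient of $t^{mn-1}$ in $P$. By the standard characterization of Hermite interpolants, $a$ is exactly the confluent divided difference of $u$ at the $mn$ nodes $z_1,\dots,z_n$ (each with multiplicity $m$).

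Next I would bound this divided difference using the Hermite--Genocchi formula, which represents it as an integral of $u^{(mn-1)}$ over the standard $(mn-1)$-simplex with argument a convex combination of the $z_j$. When the convex hull of $z_1,\dots,z_n$ lies in $\overline{U}$ this immediately yields
\[
|(T_n u)(z)|\leq \frac{2\pi}{(mn-1)!}\,\|u^{(mn-1)}\|_{L^\infty(\overline U)}.
\]
In general the convex hull need not lie in $\overline{U}$, and I would exploit the holomorphicity of $u$ on $U$ together with the rectifiability of $\bd U$ to replace the straight segments appearing in the Hermite--Genocchi formula by paths of controlled length inside $\overline{U}$, obtaining a uniform bound $|(T_n u)(z)|\leq C_U\,\|u\|_{\mathcal{A}^{mn-1}(U)}$ for $z\in U^n$ with $C_U$ depending only on $U$.

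Holomorphicity of $T_n u$ on $U^n$ is immediate from differentiating under the integral sign, and symmetry in $(z_1,\dots,z_n)$ is manifest from the form of the kernel, so $T_n u\in \mathcal{O}_{\mathrm{sym}}(U^n)$. The estimate above is first valid on the open dense subset of $U^n$ where the $z_j$ are distinct, but extends to all of $U^n$ by continuity of $T_n u$ in $z$ (the integrand depends continuously on $z\in U^n$). Thus $T_n u\in H^\infty_{\mathrm{sym}}(U^n)$ with norm controlled by $\|u\|_{\mathcal{A}^{mn-1}(U)}$, as required. The hardest part I expect is the bound in the non-convex case: the classical Hermite--Genocchi identity requires the convex hull of the nodes to lie inside the domain where $u^{(mn-1)}$ is defined, so the path-based replacement, relying on holomorphicity of $u$ and rectifiability of $\bd U$, is what delivers the uniform geometric constant $C_U$.
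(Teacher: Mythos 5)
Your identification of $(T_n u)(z)$ with $2\pi i$ times the confluent divided difference of $u$ at the nodes $z_1,\dots,z_n$ (each of multiplicity $m$) is correct and elegant, and for convex $U$ it yields a short, sharp proof via Hermite--Genocchi. However, there is a genuine gap in the non-convex case which your ``path-based replacement'' does not repair. The Hermite--Genocchi identity is intrinsically tied to convex combinations: it represents the divided difference as an integral of $u^{(mn-1)}$ evaluated at points $\sum t_j z_j$ of the convex hull of the nodes, and there is no higher-order path analogue of this identity. A path-integral substitution does handle the first-order quotient $[z_0,z_1]u=(z_1-z_0)^{-1}\int_\gamma u'$, but already the recursion $[z_0,z_1,z_2]u=\bigl([z_0,z_1]u-[z_1,z_2]u\bigr)/(z_0-z_2)$ reintroduces precisely the kind of difference quotient whose uniform boundedness as nodes collide is the entire point; replacing line segments by paths inside $U$ does not exhibit the required cancellation. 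That cancellation is genuine: in the residue expansion of the confluent divided difference, the individual terms carry factors $\prod_{i\ne j}(z_i-z_j)^{-k}$ which blow up as nodes coalesce, and yet the sum stays bounded --- some mechanism is needed to see this.

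The paper supplies that mechanism with the stratification argument: one inducts on the number $k$ of distinct coordinates, using uniform separation of the nodes on the stratum ${\mathsf V}_{k+1}\cap U^n$ away from a neighborhood of $\bigcup_{j\le k}{\mathsf V}_j\cap U^n$ to control the residue sum directly, and continuity together with the inductive bound to handle points near that neighborhood. Your divided-difference reformulation is compatible with this --- it is a tidy way to package the residue sum, and it does give a cleaner and quantitative proof when $U$ is convex --- but for non-convex $U$ you would still need the inductive compactness step (or some genuine substitute for it) to close the argument, and that step is what your proposal is missing.
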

\begin{proof} Since 
$T_n u \in \mathcal{O}_{\rm sym}(U^n)$, we only need to show that $T_n u$ is a  bounded function. To do this, we use the stratification described in Section~\ref{sec-strata}.  We first show that $T_n u$ is bounded on ${\mathsf V}_1\cap U^n$, and then show inductively that if $T_n u$ is bounded on  $ \left( \cup_{j=1}^k {\mathsf V}_j\right)\cap U^n,$
then it is also bounded on  $ \left( \cup_{j=1}^{k+1} {\mathsf V}_j\right)\cap U^n.$ Applying the induction step $n$ times, we conclude that $T_n u$ is bounded on 
$U^n$.

Recall that we use $C$  or $C_j$ to denote a constant which depends on the integers $m, n$ and the domain $U$, but not on the function $u$. The actual value of $C$ or $C_j$ at various occurrences may be different.

For the base step, let $z\in {\mathsf V}_1\cap U^n$. Therefore, there exists $w\in U$ such that $z_1=\dots=z_n=w$. Hence, we get from 
\eqref{eq-tm} that 
\[
  T_nu(z)=\int_{\bd U}\frac{u(t)}{(t-w)^{nm}}dt.
\]
By the Cauchy integral formula, we obtain that 
\[
 T_nu(z)=\left.{C}\left(\frac{d}{dt}\right)^{mn-1}
 \left(u(t)\right)\right\vert_{t=w} =  C\, u^{(mn-1)}(w).
\]
Since $u\in\mathcal{C}^{mn-1}(\overline{U})$, the function $u^{(mn-1)}$  is bounded on ${U}$. Hence, 
$T_nu$ is bounded on ${\mathsf V}_1\cap U^n$.

We now use induction for the proof. Assume that $T_nu$ is bounded on  $ \left( \cup_{j=1}^k {\mathsf V}_j\right)\cap U^n$, and we want to show that it is bounded
on  $ \left( \cup_{j=1}^{k+1} {\mathsf V}_j\right)\cap U^n.$  Let $M$ be the supremum of $\abs{T_n u}$ on $ \left( \cup_{j=1}^k {\mathsf V}_j\right)\cap U^n$. 
Since $T_n u$ is continuous, it follows that there is a neighborhood  $W_k$ of 
$ \left( \cup_{j=1}^k {\mathsf V}_j\right)\cap U^n$  such that \[
 \abs{T_nu(z)}\leq 2M \;\;\text{for all}\;\; z\in W_k.
\]
Therefore, to complete the induction step, it is enough to show that $T_nu$ is 
bounded on ${\mathsf V}_{k+1}\setminus W_k$.

Let $z\in {\mathsf V}_{k+1}\cap U^n$. Then there exists a partition $Q\in \mathcal{P}_{k+1}^n$ of $I_n$ 
such that $z\in S(Q)\cap U^n$, where $Q=\{Q_1,\dots, Q_{k+1}\}$ and $S(Q)$ is as defined in 
\eqref{eq-sp}. We write $v_1,\dots, v_{k+1}$ as the distinct coordinates corresponding 
to $Q_1,\dots,Q_{k+1}$ in the partition $Q$, respectively. Therefore, again, from \eqref{eq-tm}, we get that 
\[
  T_nu(z)=\int_{\bd U}\frac{u(t)}{\prod\limits_{l=1}^{k+1} (t-v_l)^{\abs{Q_l}m}}dt.
\]
Since the integrand is a meromorphic function on $U$ extending smoothly to $\bd U$, we can use the residue formula to compute its contour integral:
\[  T_nu(z)=  \sum_{j=1}^{k+1}C_j\res_{t=v_j}h(t), \]
where
\[ h(t)= \frac{u(t)}{\prod\limits_{l=1}^{k+1} (t-v_l)^{\abs{Q_l}m}}.\]
 Using a classical formula  (cf. \cite[Proposition~4.5.6]{gk}) to compute the residue 
at the pole $v_j$ of multiplicity $\abs{Q_j}m$, we have
\begin{align*} \res_{t=v_j}h(t)&=\left.{\left(\abs{Q_j}m-1\right)!} \left( \frac{d}{dt}\right)^{\abs{Q_j}m-1}
\left((t-v_j)^{\abs{Q_j}m}\cdot h(t)\right)\right\vert_{t=v_j}\\
&=\left.{\left(\abs{Q_j}m-1\right)!} \left( \frac{d}{dt}\right)^{\abs{Q_j}m-1}
\left(\frac{u(t)}{\prod\limits_{l\in I_{k+1}\setminus\{j\} } (t-v_l)^{\abs{Q_l}m}}\right)\right\vert_{t=v_j}.
\end{align*}Therefore,
\[ T_nu(z)=\sum_{j=1}^{k+1}C_j \left(\frac{\partial}{\partial v_j}\right)^{\abs{Q_j}m-1} 
\left(\frac{u(v_j)}{\prod\limits_{l\in I_{k+1}\setminus\{j\}}(v_j-v_l)^{\abs{Q_l}m}}\right),
\]
where the terms on the right hand side are evaluated formally. Hence, we obtain
\begin{equation}
 \abs{T_nu(z)}\leq {C}\norm{u}_{\mathcal{C}^{mn-1}(\overline{U})}\cdot \sum_{j=1}^{k+1}
 \prod_{l\in I_{k+1}\setminus\{j\}}\abs{v_j-v_l}^{-(\abs{Q_l}+\abs{Q_j})m+1}.\label{eq-modtmu}
\end{equation}
If $z\in (S(Q)\cap U^n)\setminus W_k$, then $z_i=v_l$ for all $i\in Q_l$, $l=1,\dots, {k+1}$. 
Fix an $r$, $1\leq r\leq k+1$. We now claim that there exists $\epsilon>0$ such that 
\[
 \abs{v_r-v_l}\geq \epsilon \;\; \text{for all}\;\; l=1,\dots, k+1, l\neq r.
\]
Suppose no such $\epsilon$ exists. Then there exists $p$ with $ 1\leq p\leq k+1$, $p\neq r$, such that 
for every $\epsilon>0$,  there exist $z\in  (S(Q)\cap U^n)\setminus W_k$ such that
\[
 \abs{v_r-v_p}<\epsilon.
\]
Using the Bolzano-Weirstrass theorem, we obtain a $z^*\in (S(Q)\cap U^n)\setminus W_k$ such that for the corresponding $v_j^*$, we have
$v_p^*=v_r^*$, which means that $z^*\in S(Q^*)$, where $Q^*$ is a partition of length $k$ or less. But this contradicts the fact that $z^*$ is a limit point 
of a set bounded away from $\bigcup_{j=1}^k {\mathsf V}_j$. 
Therefore, there exists $\epsilon>0$ such that $\abs{v_r-v_l}\geq \epsilon \;\; \text{for all}\;\; l=1,\dots, k+1, l\neq r.$
Hence, we get from \eqref{eq-modtmu} that 
\[
 \abs{T_nu(z)}\leq {C}\norm{u}_{\mathcal{C}^{mn-1}(\overline{U})} \epsilon^{-N},\;\; \text{for all}\;\; z\in ({\mathsf V}_{k+1}\cap U^n)\setminus W_k, 
\]
where $N>0$ is an integer.

Since we already know that $T_nu$ is bounded on $W_k$, we conclude that $T_nu$ is bounded on ${\mathsf V}_{k+1}$. Hence, 
the induction step is complete. 
Therefore, proceeding finitely many steps, we get that $T_nu$ is a  bounded function on $U^n$. Hence, $T_nu\in H^\infty_{\rm sym}(U)$ 
for all $u\in \mathcal{A}^{mn-1}(U)$.

We therefore have shown that $T_n$ is algebraically a linear map from $\mathcal{A}^{mn-1}(U)$ to $H^\infty_{\rm sym} (U^n).$
To show that $T_n$ is a bounded operator from $\mathcal{A}^{mn-1}(U)$ to ${H}^\infty_{\rm sym}(U^n)$, by the closed graph theorem,
it suffices to show that $T_n$ is a closed operator.  Let 
$\{u_\nu\}\subset\mathcal{A}^{mn-1}(U)$ be a sequence such that $\{u_\nu\}$  converges to $u$ in  $\mathcal{A}^{mn-1}(U)$ 
and $T_nu_\nu$ converges  to $v$ in $ {H}^\infty_{\rm sym}(U)$ as $\nu\to \infty$. We want to show that  $v=T_nu.$ Suppose that
the smooth boundary $\bd U$ is the union of $M$ smooth closed curves, and let  them be parametrized as $\gamma_k:[0,1]\to\cx$,  where $k\in I_M$,
and each $\gamma_k$ is smooth.
From \eqref{eq-tm}, we have for each $z\in U^n$:
\begin{align*}
T_nu_\nu(z)&=\int_{\bd U}\frac{u_\nu(t)}{\prod\limits_{j=1}^n(t-z_j)^m}dt\\
&=\sum_{k=1}^M\int_0^1 \frac{u_\nu(\gamma_k(\tau)) \gamma_k\rq{}(\tau)}{\prod\limits_{j=1}^n(\gamma_k(\tau)-z_j)^m}d\tau.
\end{align*} 
Since   $\{u_\nu\}$  converges to $u$ in  $\mathcal{A}^{mn-1}(U)$, a fortiori, the sequence  $\{u_\nu\}$ converges uniformly to $u$ on $\bd U$, so the integrand in the $k$-th integral converges uniformly to 
\[ \tau \mapsto \frac{u(\gamma_k(\tau)) \gamma_k\rq{}(\tau)}{\prod\limits_{j=1}^n(\gamma_k(\tau)-z_j)^m}.\]
Therefore, taking the limit under the integral signs, we conclude that 
 for $z\in U^n$, 
\[
v(z)=\lim_{\nu\to\infty}T_nu_\nu(z)=\int_{\bd U}\frac{u(t)}{\prod_{j=1}^n(t-z_j)^m}dt=T_nu(z),
\]
which completes the proof.
\end{proof}
Lemma~\ref{lem-tm} allows us to conclude a weak version of Proposition~\ref{prop-am}:
\begin{cor}\label{cor-am} The operator ${\mathsf  J}$ is bounded from $\mathcal{A}^{mn-1}(U)$ to $H^\infty(\Sigma^nU)$.
\end{cor}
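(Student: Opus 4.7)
The plan is to deduce Corollary~\ref{cor-am} directly from Lemma~\ref{lem-tm} by observing that $\mathsf{J}$ and $T_n$ are related through the symmetrization map. Specifically, for $z \in U^n$ with $s = \pi(z) \in \Sigma^n U$, the characterization \eqref{eq-viete} of the elementary symmetric polynomials gives
\[
q(s;t) = \prod_{j=1}^n (t - z_j),
\]
and hence $q(s;t)^m = \prod_{j=1}^n (t-z_j)^m$. Comparing \eqref{eq-amdef} and \eqref{eq-tm}, this yields the factorization
\[
T_n u = (\mathsf{J} u) \circ \pi
\]
on $U^n$.

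From this factorization everything follows at once. First, holomorphicity of $\mathsf{J}u$ on $\Sigma^n U$ was already noted right after \eqref{eq-amdef}, since $q(s;t)$ is holomorphic in $s$ and nonvanishing on $\bd U$ when $s \in \Sigma^n U$; hence $\mathsf{J}u \in \mathcal{O}(\Sigma^n U)$. Second, because $\pi : U^n \to \Sigma^n U$ is surjective (property (b) of Section~\ref{sec-symmetrization}), the supremum norms match:
\[
\sup_{s \in \Sigma^n U} |\mathsf{J}u(s)| = \sup_{z \in U^n} |(\mathsf{J}u)(\pi(z))| = \sup_{z \in U^n} |T_n u(z)| = \|T_n u\|_{H^\infty_{\mathrm{sym}}(U^n)}.
\]
Combining this equality with the bound from Lemma~\ref{lem-tm} gives
\[
\|\mathsf{J}u\|_{H^\infty(\Sigma^n U)} \leq C \, \|u\|_{\mathcal{A}^{mn-1}(U)},
\]
which is exactly the desired boundedness of $\mathsf{J}: \mathcal{A}^{mn-1}(U) \to H^\infty(\Sigma^n U)$.

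There is no real obstacle here: the entire content of the corollary is packaged inside Lemma~\ref{lem-tm}, and the proof amounts to unpacking the definition of $q(s;t)$ to recognize $T_n u$ as the pullback of $\mathsf{J}u$ under $\pi$. The stronger statement in Proposition~\ref{prop-am}, namely continuity up to the boundary of $\Sigma^n U$, is not yet addressed: the sup-norm bound via $\pi$ only produces a bounded holomorphic function, and continuous extension to $\overline{\Sigma^n U}$ (which is needed to land in $\mathcal{A}(\Sigma^n U)$) will presumably require a separate argument in the sequel, exploiting the boundary geometry of $\Sigma^n U$ and the smoothness of $u$ up to $\bd U$.
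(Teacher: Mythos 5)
Your proof is correct and is essentially the paper's argument: both rest on the identity $T_n u = (\mathsf{J}u)\circ\pi$ (coming from $q(\pi(z);t)=\prod_j(t-z_j)$) and the fact that pullback by the surjection $\pi$ preserves sup norms, so the bound for $T_n$ from Lemma~\ref{lem-tm} transfers directly to $\mathsf{J}$. The paper phrases this slightly more compactly by introducing $\pi^*$ as an isometric isomorphism onto $H^\infty_{\rm sym}(U^n)$, but the content is the same.
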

\begin{proof} Let $\pi^*:H^\infty(\Sigma^n U)\to  H^\infty_{\rm sym}(U^n)$ be defined as $\pi^*(u)= u\circ \pi$.  Then it is not difficult to see that 
$\pi^*$ is an isometric isomorphism of Banach spaces. But it is clear that $T_n = \pi^*\circ {\mathsf  J}$.  The result follows.
\end{proof}
Let $\lo(\Sigma^n U)$ denote the space of Lipschitz holomorphic functions on $\Sigma^n U$. Recall that this is a Banach space with the norm
\[ \norm{u}_{\lo} = \norm{u}_{\sup} +\sup_{s,s\rq{}\in \Sigma^n U} \frac{\abs{u(s)-u(s\rq{})}}{\abs{s-s\rq{}}}. \]
Clearly, $\lo(\Sigma^n U)\subset\mathcal{A}(\Sigma^n U)$ with continuous inclusion.
We can now prove the following:
\begin{lem}\label{lem-lip}The operator ${\mathsf J}$ is bounded from $\mathcal{A}^{2mn-1}(U)$ to $\lo(\Sigma^n U)$.
\end{lem}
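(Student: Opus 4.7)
The idea is to rewrite $\mathsf{J}u(s) - \mathsf{J}u(s')$ as a value of the analogous integral operator associated to the \emph{$2n$}-fold symmetric product $\Sigma^{2n}U$, and then apply Corollary~\ref{cor-am} with $n$ replaced by $2n$; the exponent $m\cdot 2n-1=2mn-1$ then explains the smoothness hypothesis.

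Using the telescoping identity $a^m - b^m = (a-b)\sum_{k=0}^{m-1} a^{m-1-k}b^k$ with $a = q(s';t)$ and $b = q(s;t)$, I get
\[
\frac{1}{q(s;t)^m} - \frac{1}{q(s';t)^m} = \frac{\bigl(q(s';t) - q(s;t)\bigr)\,R(s,s';t)}{q(s;t)^m q(s';t)^m},
\]
where $R(s,s';t) := \sum_{k=0}^{m-1} q(s';t)^{m-1-k} q(s;t)^k$ is a polynomial in $(s,s',t)$ of degree at most $n(m-1)$ in $t$, with coefficients uniformly bounded in $(s,s')\in\overline{\Sigma^n U}\times\overline{\Sigma^n U}$. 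The key observation is that $q(s;t)q(s';t)$ is itself a monic polynomial of degree $2n$ in $t$, equal to $\tilde q(\tilde s;t)$, where $\tilde q$ is the polynomial from \eqref{eq-q} with $n$ replaced by $2n$ and $\tilde s\in\cx^{2n}$ is a polynomial function of $(s,s')$ explicitly given by $\tilde s_\mu = \sum_{k+l=\mu} s_k s'_l$ (with $s_0=s'_0=1$). For $s,s'\in\Sigma^n U$ the $2n$ roots of $\tilde q(\tilde s;\cdot)$, being the concatenation of those of $q(s;\cdot)$ and $q(s';\cdot)$, all lie in $U$, so $\tilde s\in\Sigma^{2n}U$. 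Combined with the linearity $q(s';t) - q(s;t) = \sum_{j=1}^n(-1)^j(s'_j - s_j)t^{n-j}$, multiplication by $u(t)$ and integration over $\bd U$ yields
\[
\mathsf{J}u(s) - \mathsf{J}u(s') = \sum_{j=1}^n (-1)^j (s'_j - s_j)\,\tilde{\mathsf{J}}\bigl(t^{n-j}R(s,s';t)u(t)\bigr)(\tilde s),
\]
where $\tilde{\mathsf{J}}$ is the operator \eqref{eq-amdef} associated to $\Sigma^{2n}U$, with the same exponent $m$.

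Applying Corollary~\ref{cor-am} to $\tilde{\mathsf{J}}$ gives boundedness from $\mathcal{A}^{2mn-1}(U)$ to $H^\infty(\Sigma^{2n}U)$. Since multiplication by the polynomial $t^{n-j}R(s,s';t)$ in $t$ is a bounded operator on $\mathcal{A}^{2mn-1}(U)$ whose norm is uniformly bounded for $(s,s')\in\overline{\Sigma^n U}\times\overline{\Sigma^n U}$, each term of the displayed sum is bounded in modulus by $C\abs{s'_j-s_j}\cdot\norm{u}_{\mathcal{A}^{2mn-1}(U)}$. Hence $\abs{\mathsf{J}u(s) - \mathsf{J}u(s')} \leq C\norm{u}_{\mathcal{A}^{2mn-1}(U)}\abs{s-s'}$, and combined with the sup-norm bound from Corollary~\ref{cor-am} (which applies since $\mathcal{A}^{2mn-1}(U)\subset\mathcal{A}^{mn-1}(U)$) this yields the required $\lo(\Sigma^n U)$-norm estimate. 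The main technical point to check is that the polynomial formula for $\tilde s$ in terms of $(s,s')$ indeed takes values in $\Sigma^{2n}U$ when $s,s'\in\Sigma^n U$, which follows from the factorization of $\tilde q(\tilde s;\cdot)$ into roots in $U$.
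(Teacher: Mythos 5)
Your proof is correct and uses essentially the same idea as the paper's: the telescoping identity combined with the doubling observation that $q(s;t)\,q(s';t)$ is the kernel polynomial associated to a point of $\Sigma^{2n}U$, which reduces the Lipschitz estimate to the boundedness of the $2n$-fold operator from earlier in the section. The only cosmetic difference is that you invoke Corollary~\ref{cor-am} for $\tilde{\mathsf{J}}$ on $\Sigma^{2n}U$ after noting that the coefficient convolution $(s,s')\mapsto\tilde s$ lands in $\Sigma^{2n}U$, whereas the paper lifts via the surjection $\Pi\colon U^{2n}\to\Sigma^n U\times\Sigma^n U$ and applies Lemma~\ref{lem-tm} to $T_{2n}$ --- these are equivalent under the isometric isomorphism $\pi^*$ used to derive Corollary~\ref{cor-am} from Lemma~\ref{lem-tm}.
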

\begin{proof}
We have, for $u\in \mathcal{A}^{2mn-1}(U)$ :
\begin{align}
{\mathsf J} u(s)-{\mathsf  J}u(s')
&=\int_{\bd U}u(t)\left(\frac{1}{q(s;t)^{m}} - \frac{1}{q(s';t)^{m}}\right)dt \notag\\
&= \int_{\bd U} u(t)\frac{q(s';t)^{m}-q(s;t)^{m}}{q(s;t)^{m}q(s';t)^{m}} dt. \label{eq-qq'}
\end{align}
Factorizing the numerator of the integrand in \eqref{eq-qq'}, we have 
\begin{align}
q(s;t)^{m}-q(s';t)^{m}&=\left(q(s';t)-q(s;t)\right)\cdot\sum_{k=1}^{m-1} q(s;t)^k q(s';t)^{m-1-k}\nonumber\\
&=\left(\sum_{j=1}^n(-1)^j(s_j-s'_j)t^{n-j}\right)\cdot \left(\sum_{\ell=0}^{n(m-1)}P_\ell(s,s')t^\ell\right)\nonumber\\
&= \sum_{j=1}^n\sum_{\ell=0}^{n(m-1)}(-1)^j(s_j-s'_j)P_\ell(s,s')t^{\ell+n-j},\label{eq-facnum}
\end{align}
where in the second line, we have used the expression of $q(s,t)$ and $q(s';t)$  in the first factor, and rearranged the second factor in descending powers of 
$t$, so that $P_\ell$, $\ell=0,\dots,n(m-1)$, are polynomials on $\cx^n\times \cx^n$.

Hence, in view of \eqref{eq-facnum}, \eqref{eq-qq'} reduces to 
\begin{align}
 {\mathsf J}u(s)-{\mathsf J}u(s')&
=\int_{\bd U} u(t)\frac{\sum_{j=1}^n\sum_{\ell=0}^{n(m-1)}(-1)^j(s_j-s'_j)P_\ell(s,s')t^{\ell+n-j}}{q(s;t)^{m}q(s';t)^{m}}dt\notag\\
 &=\sum_{j=1}^n(-1)^j(s_j-s'_j)\left(\sum_{\ell=0}^{n(m-1)}P_\ell(s,s')\int_{\bd U} \frac{t^{\ell+n-j} u(t)}{q(s;t)^{m}q(s';t)^{m}}dt\right). \label{eq-intqq'}
 \end{align}
We now claim that for each $j$ with $ 1\leq j \leq n$ and each $\ell$ with  $0\leq \ell \leq n(m-1)$, the functions on $\Sigma^n U\times \Sigma^nU$ defined by 
\[ H_{j,\ell}(s,s')= \int_{\bd U} \frac{t^{\ell+n-j} u(t)}{q(s;t)^{m}q(s';t)^{m}}dt\]
(the  integrals in \eqref{eq-intqq'}) are bounded. To estimate $H_{j,\ell}$, we consider the surjective map $\Pi$ from $U^{2n}$ to
 $\Sigma^n U\times \Sigma^n U$ given by $(z,z\rq{})\mapsto (\pi(z), \pi(z\rq{}))$, where $z=(z_1,\dots,z_n)$ and $z\rq{}=(z_{n+1},\dots, z_{2n})$,
where $z_j\in U$ for $j=1,\dots,2n$. To show that $H_{j,\ell}$ is bounded on $\Sigma^n U\times \Sigma^n U$, it is sufficient to show that $H_{j,\ell}\circ \Pi$ is bounded on $U^{2n}$. But for $(z,z\rq{})\in U^{2n}$ we have
\begin{align*}H_{j,\ell}\circ \Pi (z,z\rq{})&= \int_{\bd U} \frac{t^{\ell+n-j} u(t)}{\left(q(\pi(z);t)q(\pi(z');t)\right)^{m}}dt\\
&=  \int_{\bd U} \frac{t^{\ell+n-j} u(t)}{\prod_{i=1}^{2n}(t-z_i)^m}dt\\
&= T_{2n}\left( t^{\ell+n-j}u\right)
\end{align*}
By Lemma~\ref{lem-tm} the operator $T_{2n}$ is bounded from $\mathcal{A}^{2mn-1}(U)$ to $H^\infty_{\rm sym}(U^{2n})$. We therefore have
\[ \norm{H_{j,\ell}\circ \Pi }_{\sup} \leq C \norm{t^{\ell+n-j} u}_{\mathcal{C}^{2mn-1}(\overline{U})}\leq C \norm{u}_{\mathcal{C}^{2mn-1}(\overline{U})}.\]
Since the polynomial $P_\ell$ is bounded on $\Sigma^nU\times \Sigma^n U$, from \eqref{eq-intqq'}  we get the estimate
\[
\abs{{\mathsf J}u(s)-{\mathsf J}u(s')}\leq  C \norm{u}_{\mathcal{C}^{2mn-1}(\overline{U})}\abs{s-s'},
\]
which shows that ${\mathsf J}u$  is Lipschitz on $\Sigma^n U$. And we also have
\begin{align*} 
\norm{{\mathsf J}u}_{\lo (\Sigma^n U)} &= \norm{{\mathsf J}u}_{\mathcal{C}(\overline{U})}+ \sup_{s,s\rq{}\in \Sigma^n U} \frac{\abs{{\mathsf J}u(s)-{\mathsf J}u(s\rq{})}}{\abs{s-s\rq{}}}.\\
&\leq C \norm{u}_{\mathcal{C}^{mn-1}(\overline{U})}+ C \norm{u}_{\mathcal{C}^{2mn-1}(\overline{U})}\\
&\leq C\norm{u}_{\mathcal{C}^{2mn-1}(\overline{U})},
\end{align*}
which shows that the map $\mathsf{J}$ is bounded from $\mathcal{C}^{2mn-1}(\overline{U})$ to $\lo (\Sigma^n U)$.
\end{proof}

We can now complete the proof of Proposition~\ref{prop-am}:
\begin{proof}[Proof of Proposition~\ref{prop-am}] Since $\mathcal{A}(\Sigma^nU)$ is a closed subspace of the Banach space $H^\infty(\Sigma^n U)$, it is sufficient to show that for each 
$u\in \mathcal{A}^{mn-1}(U)$, we have ${\mathsf J}u \in  \mathcal{A}(\Sigma^nU)$.   Let $\mathcal{O}(\overline{U})$ denote the space of those functions $f\in \mathcal{O}(U)$ such that there is an open neighborhood
 $U_f \supset \overline{U}$ to which $f$ extends holomorphically.  It is a classical fact that  for each $k\geq 0$, the space $\mathcal{O}(\overline{U})$ is dense in $\mathcal{A}^k(U)$.  For $k=0$, this is a famous theorem of Mergelyan, and the general case may be easily reduced to the case $k=0$. Therefore 
 we can find a sequence 
$\{u_\nu\}$ in $\mathcal{O}(\overline{U})$ such that $u_\nu\to u$ in the topology of $\mathcal{A}^{mn-1}(U)$ as $\nu\to \infty$. But for each 
$u_\nu$, we have ${\mathsf J}u_\nu\in \lo(\Sigma^n U)$ by Lemma~\ref{lem-lip}. In particular,  ${\mathsf J}u_\nu \in \mathcal{A}(\Sigma^n U)$ . By continuity of 
${\mathsf J}$, we have ${\mathsf J}u_\nu \to {\mathsf J}u$ in the supremum norm in $H^\infty(\Sigma^n U)$. Since $\mathcal{A}(\Sigma^n U)$ is
closed in  $H^\infty(\Sigma^n U)$, it follows that ${\mathsf J}u \in \mathcal{A}(\Sigma^n U)$.

Now let $u\in\mathcal{O}(U)$, be such that $u\not\in \mathcal{A}^{mn-1}(U)$.
By Corollary~\ref{cor-v1}, the map $\pi$ is a biholomorphism when restricted to the 
complex line ${\mathsf V}_1= \{w v | w\in \cx\}$, where $v=(1,\dots,1)$.  As in the proof of  Lemma~\ref{lem-tm},  for $w\in U$,
we can compute the 
value of the function ${\mathsf J}u(s)$ at the point $s=\pi(wv)$ on $\pi({\mathsf V}_1)$ using the Cauchy integral formula:
\begin{align*}{\mathsf J}u(s)&= \int_{\bd U}\frac{u(t)}{(t-w)^{mn}}dt\\
&= C u^{(mn-1)}(w).
\end{align*}
Since $u^{(mn-1)}$ does not extend continuously to $\overline{U}$, it follows that ${\mathsf J}(u)$ does not extend continuously to  $\pi({\mathsf V}_1)\cap 
\Sigma^n U$. It follows that ${\mathsf J}u \not\in \mathcal{A}(\Sigma^n U)$.
\end{proof}

\section{Proof of Theorem~\ref{thm-akn}}\label{S:phi}

When $k=0$, applying Lemma~\ref{lem-cont} to the continuous map $\varphi: \overline{U}\to \overline{V}$, we see that $\Sigma^n \varphi$ is continuous 
from $\Sigma^n \overline{U}$ to $\Sigma^n \overline{V}$. Further by Corollary~\ref{cor-holo}, the map $\Sigma^n \varphi$ is holomorphic from
$\Sigma^n {U}$ to $\Sigma^n {V}$, which proves Theorem~\ref{thm-akn} when $k=0$.

Therefore, we may assume $k\geq 1$.
In view of \eqref{eq-ppsi}, the  regularity of the map $\Sigma^n\varphi$ is same as that of the mapping $\Psi$. 
We therefore want to compute the first $k$  derivatives of the function $\Psi_m$  and show that they extend continuously to the boundary of $\Sigma^n U$.  In view of the representation \eqref{eq-psim-rep} of the components $\Psi_m$ of $\Psi$,  the conclusion of Theorem~\ref{thm-akn} follows from the following 
lemma:
\begin{lem}\label{L:delG}
For $k\geq 1$,  and a function $g\in \mathcal{A}^1(U)$, let  the function $G$ be defined by 
 \begin{equation}\label{eq-Gg} G(s)= \int_{\bd{U}}
g(t)\frac{q'(s;t)}{q(s;t)} dt,
\end{equation} then $G\in \mathcal{A}^k(\Sigma^n U)$ if and only if $g\in\mathcal{A}^{kn}(U)$.
\end{lem}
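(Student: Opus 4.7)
My plan is to derive a tractable formula for arbitrary $s$-derivatives of $G$ and then invoke Proposition~\ref{prop-am} in both directions; the main technical step is the kernel identity below. Observing that $q'(s;t)/q(s;t) = (\partial/\partial t)\log q(s;t)$, and using $\partial_j q(s;t) = (-1)^j t^{n-j}$ together with $\partial_i\partial_j q\equiv 0$, a short induction on $|\alpha|$ produces
\[
\partial^\alpha\!\left(\frac{q'(s;t)}{q(s;t)}\right) \;=\; c_\alpha\,\frac{\partial}{\partial t}\!\left[\frac{t^{kn - |\alpha|_*}}{q(s;t)^{k}}\right], \qquad k := |\alpha| \geq 1,
\]
where $|\alpha|_* := \sum_{\ell=1}^{n}\ell\,\alpha_\ell$ and $c_\alpha = (-1)^{|\alpha|_* + k - 1}(k-1)! \neq 0$. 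Since $\bd U$ is a finite union of closed curves and $g \in \mathcal{A}^1(U)$, integrating by parts along $\bd U$ (the boundary contribution vanishes) converts this into
\[
\partial^\alpha G(s) \;=\; -c_\alpha \int_{\bd U} \frac{g'(t)\, t^{kn - |\alpha|_*}}{q(s;t)^{k}}\, dt,
\]
which is precisely the operator $\mathsf{J}$ of \eqref{eq-amdef} (with $m = k$) applied to the holomorphic function $t \mapsto g'(t)\, t^{kn - |\alpha|_*}$.

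For the \emph{if} direction, assume $g \in \mathcal{A}^{kn}(U)$. Then $g' \in \mathcal{A}^{kn-1}(U) \subseteq \mathcal{A}^{jn-1}(U)$ for every $j \leq k$, and multiplication by the polynomial $t^{jn - |\alpha|_*}$ (whose exponent is nonnegative since $|\alpha|_* \leq nj$) preserves $\mathcal{A}^{jn-1}(U)$. Proposition~\ref{prop-am} therefore yields $\partial^\alpha G \in \mathcal{A}(\Sigma^n U)$ whenever $1 \leq |\alpha| \leq k$. The case $|\alpha| = 0$ is handled by expanding $q'(s;t) = \sum_{i=0}^{n-1}(-1)^i (n-i)\, s_i\, t^{n-i-1}$ (with $s_0 := 1$) and applying Proposition~\ref{prop-am} with $m=1$ to each term. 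Since $G$ is holomorphic its $\overline{s}$-derivatives vanish, so continuous extension of all holomorphic $s$-derivatives of order at most $k$ gives $G \in \mathcal{C}^k(\overline{\Sigma^n U})$, i.e., $G \in \mathcal{A}^k(\Sigma^n U)$.

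For the converse, I restrict $G$ to the diagonal: by Corollary~\ref{cor-v1}, $\pi$ is a biholomorphism from $\mathsf{V}_1$ onto $\pi(\mathsf{V}_1)$, and the map $w \mapsto \pi(w,\ldots,w)$ is continuous from $\overline{U}$ into $\overline{\Sigma^n U}$. At $s = \pi(w,\ldots,w)$ we have $q(s;t) = (t - w)^n$, so the Cauchy integral formula for higher derivatives transforms the integral identity above into
\[
\partial^\alpha G\bigl(\pi(w,\ldots,w)\bigr) \;=\; c'_\alpha\,\frac{d^{jn - 1}}{dw^{jn - 1}}\!\left[g'(w)\, w^{jn - |\alpha|_*}\right], \qquad j := |\alpha|.
\]
If $G \in \mathcal{A}^k(\Sigma^n U)$, the left-hand side is continuous in $w \in \overline{U}$ for every $|\alpha| \leq k$. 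Fixing $j \leq k$ and letting $|\alpha|_*$ run through the integer interval $[j, jn]$, the Leibniz expansion of the right-hand side yields a triangular linear system: the choice $|\alpha|_* = jn$ gives $g^{(jn)}$ directly, and descending successively produces $g^{(jn-1)}, g^{(jn-2)}, \ldots, g^{(j)}$ as continuous functions on $\overline{U}$. Ranging $j$ over $0, 1, \ldots, k$, the resulting ranges $\{j, j+1, \ldots, jn\}$ (together with $\{0\}$ for $j=0$) exhaust $\{0, 1, \ldots, kn\}$, establishing $g \in \mathcal{A}^{kn}(U)$. The main hurdle is the kernel identity in the first display; once it is in hand, everything reduces to bookkeeping with Proposition~\ref{prop-am}, the Cauchy formula, and the Leibniz rule.
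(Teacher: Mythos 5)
Your proof is correct, and the forward direction is essentially the paper's: you integrate by parts to express $\partial^\alpha G = c_\alpha\, \mathsf{J}\bigl(t^{\gamma}g'\bigr)$ with $m=\lvert\alpha\rvert$ (you derive the kernel identity first and integrate by parts once; the paper integrates by parts after the first $s_j$-derivative and then keeps differentiating --- same end formula, since $kn-\lvert\alpha\rvert_\ast = \sum_\ell (n-\ell)\alpha_\ell = \gamma$), then apply Proposition~\ref{prop-am}. Where you genuinely diverge is the converse. The paper finishes in one line by invoking the \emph{negative} half of Proposition~\ref{prop-am}: take $\alpha=(0,\dots,0,k)$, so $\gamma=0$ and $\partial^\alpha G=c\,\mathsf{J}(g')$ with $m=k$; if $g\notin\mathcal{A}^{kn}(U)$ then $g'\notin\mathcal{A}^{kn-1}(U)$ and the proposition immediately gives $\partial^\alpha G\notin\mathcal{A}(\Sigma^n U)$. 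You instead re-derive the needed nonextendability by hand: restricting to the diagonal $\pi(\mathsf{V}_1)$, using the Cauchy formula to turn $\partial^\alpha G$ into $\frac{d^{jn-1}}{dw^{jn-1}}\bigl[g'(w)w^{jn-\lvert\alpha\rvert_\ast}\bigr]$, and running a Leibniz/triangularity induction over $\lvert\alpha\rvert_\ast\in\{j,\dots,jn\}$ and $j\in\{0,\dots,k\}$ to recover continuity of $g^{(0)},\dots,g^{(kn)}$. This is longer and requires the combinatorial observation that $\lvert\alpha\rvert_\ast$ fills out the full integer range $[j,jn]$ (true, since one can shift weight between $\alpha_\ell$ and $\alpha_{\ell+1}$ one unit at a time), but it buys you a converse that does not rely on the sharpness statement in Proposition~\ref{prop-am}; in effect you are re-proving the special case of that sharpness you need. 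Both routes then invoke Whitney-type compatibility to pass from ``all complex derivatives up to order $kn$ extend continuously'' to $g\in\mathcal{A}^{kn}(U)$. One small stylistic point: your treatment of $\lvert\alpha\rvert=0$ (expanding $q'$ and using $\mathsf{J}$ with $m=1$) is fine, but it is worth noting it gives $G\in\mathcal{A}(\Sigma^n U)$ already from $g\in\mathcal{A}^{n-1}(U)$, which is weaker than $\mathcal{A}^{kn}(U)$; this is consistent, since $k\geq1$.
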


\begin{proof}
By the Whitney extension theorem it is sufficient to show that for each  multi-index $\alpha$,  with $\abs{\alpha}\leq k$, the derivative
$\partial^\alpha G(s)$ is in $\mathcal{A}(\Sigma^n U)$. Again, in this proof we let $C$ denote a constant which only depends on the multi-index $\alpha$. The value of $C$ in different 
occurrences may be different. If $\alpha=(\alpha_1,\dots,\alpha_n)$ 
is a multi-index, denote by $\partial^\alpha$ the complex partial derivative
\[ \left(\dfrac{\partial}{\partial s_1}\right)^{\alpha_1}\left(\dfrac{\partial}{\partial s_2}\right)^{\alpha_2}\dots 
\left(\dfrac{\partial}{\partial s_n}\right)^{\alpha_n},\]
 acting on holomorphic functions of $n$ complex variables $(s_1,\dots,s_n)$.
We now compute $ \partial^\alpha G$. If $t\in\bd U$, the function $\dfrac{q'(s;t)}{q(s;t)}$ is holomorphic  in $s\in\Sigma^nU$. Hence, for $1\leq j \leq n$, we can differentiate \eqref{eq-Gg} 
with respect to $s_j$ under the integral sign to get: 
\[
\dfrac{\partial}{\partial s_j}G(s_1,\dots, s_n)= 
\int_{\bd {U}}g(t)\dfrac{\partial}{\partial s_j}\left(\frac{q'(s;t)}{q(s;t)}\right)dt.
\]
After choosing a locally defined branch of the logarithm of $q$, we have
\begin{equation}\label{E:interdiff}
\dfrac{\partial}{\partial s_j}\left(\dfrac{q'(s;t)}{q(s;t)}\right) = \dfrac{\partial}{\partial s_j}\left(\dfrac{\partial}{\partial t}\log q(s;t)\right)= 
\dfrac{\partial}{\partial t}\left(\dfrac{\partial}{\partial s_j}\log q(s;t)\right)= \dfrac{\partial}{\partial t}\left(\dfrac{q_j(s;t)}{q(s;t)}\right),
\end{equation}
where \begin{equation}\label{eq-qj} q_j(s;t)=\frac{\partial q(s;t)}{\partial s_j} =   (-1)^jt^{n-j}, \end{equation}
since $q(s;t)=t^n-s_1t^{n-1}+\dots +(-1)^ns_n$.
Since the first and last expressions in \eqref{E:interdiff} are globally defined rational functions, they are equal for all $s\in \Sigma^n U$ and  all $t\in U$.
Therefore,
\begin{align}
\dfrac{\partial}{\partial s_j}G(s_1,\dots, s_n)&= \int_{\bd {U}}g(t)
\dfrac{\partial}{\partial t}\left(\frac{q_j(s;t)}{q(s;t)}\right)dt\nonumber\\
&= \int_{\bd {U}}\left\lbrace\frac{\partial}{\partial t}\left(g(t)\frac{q_j(s;t)}{q(s;t)}\right) - g'(t) \frac{q_j(s;t)}{q(s;t)}\right\rbrace dt\nonumber\\
&= -\int_{\bd {U}}g'(t) \frac{q_j(s;t)}{q(s;t)} dt\nonumber\\
&= {(-1)^{n-j+1}}\int_{\bd U}\frac{t^{n-j}g'(t)}{q(s;t)} dt\label{eq-gder}
\end{align}
since the integral of the first term in the second line vanishes. 
By repeated differentiation, and using \eqref{eq-qj} we have for each non-zero multi-index $\beta=(\beta_1,\dots,\beta_n)$ that
\[ \partial^\beta \left( \frac{1}{q(s;t)}\right) = C_\beta \cdot\frac{t^{\sum_{k=1}^n (n-k)\beta_k }}{q(s;t)^{\abs{\beta}+1}},\]
where $C_\beta$ is a constant which depends only on the multi-index $\beta$.

For a multi-index $\alpha\neq 0$, there exists $j$ with $ 1\leq j\leq n$, such that $\alpha_j>0$. 
Let $\beta=\alpha - e^j$, where $e^j$ is a unit multi-index  with $1$ in the $j$-th position and 0 everywhere else.
Again, differentiating under the integral sign in the equation \eqref{eq-gder},   we have:
\begin{align}
\partial^\alpha G(s)  &={(-1)^{n-j+1}} \int_{\bd U}{t^{n-j}g'(t)}\cdot\partial^\beta\left(\frac{1}{q(s;t)}\right) dt\nonumber\\
&= {(-1)^{n-j+1}}\cdot C_\beta\int_{\bd U}t^{n-j+\sum_{k=1}^n (n-k)\beta_k } g'(t) 
\cdot \frac{1}{q(s;t)^{\abs{\beta}+1}} dt\nonumber\\
&=C\int_{\bd U}g'(t)\dfrac{t^\gamma}{q(s;t)^{\abs{\alpha}}}dt,\label{eq-derG}
\end{align}
where $C$  is a constant depending on the multi-index $\alpha$ alone  and $\gamma$ is given by
\begin{equation}\label{eq-gamma} \gamma = \sum_{\ell=1}^n (n-\ell)\alpha_\ell.\end{equation}
Then, in the notation of Proposition~\ref{prop-am}, we can write, with $m=\abs{\alpha}$, that:
\[ \partial^\alpha G(s) = {\mathsf  J}(t^\gamma g\rq{}).\]
It now follows from Proposition~\ref{prop-am} that $\partial^\alpha G \in \mathcal{A}(\Sigma^n U)$ for $\abs{\alpha}\leq k$ if and only if
$t^\gamma g'$ is of class $\mathcal{A}^{kn-1}(U)$. Applying the  Whitney Extension theorem, this is equivalent to $G$ being of class $\mathcal{A}^{k}(U)$, and note that 
this also holds if $k=\infty$.
\end{proof}

\section{Proof of Theorem~\ref{thm-dbar}}
\label{sec-thm-dbar}
In the proof  of Theorem~\ref{thm-dbar} we use the following result  \cite{duf,cc}, which can be proved either by $L^2$ methods, or by integral
representations.
\begin{result}[Dufresnoy \cite{duf}; also see \cite{cc}]\label{thm-duf}
Let $\Gamma$ be a compact set in $\cx^n$, and suppose that there is a family of open pseudoconvex neighborhoods 
$\{\Omega_\nu\}_{\nu=1}^\infty$ of $\Gamma$ such that
\begin{enumerate}
\item There is a $\theta>0$ such that for $0<\epsilon<\frac{1}{2}$, there is a $\nu$ such that
\[ \Gamma(\epsilon^\theta)\subset \Omega_\nu\subset \Gamma(\epsilon)\]
where $\Gamma(\delta)$ denotes the $\delta$-neighborhood
\[ \Gamma(\delta)= \{z\in \cx^n \mid\dist(z,\Gamma)<\delta\}.\]
\item For each $\mu$ there is a $\nu_0$ such that if $\nu>\nu_0$, the set $\overline{\Omega_\nu}$ is holomorphically convex in $\Omega_\mu$.
\end{enumerate}
Then given a form $g\in \mathcal{C}^\infty_{p,q}(\Gamma)$ with $\dbar g=0$, there is 
a $u\in \mathcal{C}^\infty_{p,q-1}(\Gamma)$ such that $\dbar u=g$.
\end{result}
Note that  by condition (2),  it follows that we have
\[ \bigcap_{\nu=1}^\infty \Omega_\nu = \Gamma.\]
By definition,  a $\mathcal{C}^\infty$-function on the set $\Gamma$ is a function admitting a $\mathcal{C}^\infty$ extension to a neighborhood.

\subsection{Model case}
Let $D(z,r)$ denote the open disc in the plane with centre at $z\in\cx$ and radius $r>0$, and let $\mathbb{D}= D(0,1)$ be the open unit disc.
A domain $V$ in the plane is said to be a {\em domain with circular boundary} if there is a nonnegative integer $M$ such that $V$ can be represented as
\begin{equation}\label{eq-vrep} V = \mathbb{D} \setminus \left( \bigcup_{j=1}^M \overline{D(z_j,r_j)}\right),\end{equation}
where the $M$ closed discs $ \{\overline{D(z_j,r_j)}\}_{j=1}^M$ are pairwise disjoint and are all contained in the open unit disc $\mathbb{D}$. 
The domain $V$ is a {\em semi-algebraic} subset of $\cx=\rl^2$ (see \cite[Chapter 2]{bcr} for more information on  semi-algebraic subsets of $\rl^n$, which formally correspond to affine algebraic varieties in $\cx^n$.)
 ``Semi-algebraic"  means that the set $V$ is defined in $\rl^2$ by a finite system of polynomial equations 
and inequalities. Setting $z_j=x_j+iy_j$, it is easy to write down the  $M+1$ polynomial inequalities that define $V$: the inequality $x^2+y^2<1$ along with the 
$M$ inequalities $(x-x_j)^2+(y-y_j)^2>r_j^2$, where $j\in I_M$. A mapping between semi-algebraic sets is  said to be semi-algebraic if its 
graph is semi-algebraic.
We will need the following  facts (see \cite{bcr}):
\begin{enumerate}[(a)]
\item The class of semialgebraic  subsets is closed under various natural operations. Interiors, closures, projections of semi-algebraic sets are semi-algebraic.
The image of a semi-algebraic subset under a semi-algebraic mapping is semi-algebraic. In particular, if $V$ is any 
semi-algebraic subset of $\cx=\rl^2$, we see that the symmetric product $\Sigma^n V\subset\cx^n =\rl^{2n}$ is semi-algebraic for each $n$. This follows
since $V^n$ is clearly semi-algebraic, and the mapping $\pi:\rl^{2n}\to \rl^{2n}$ is polynomial, and in particular semi-algebraic, so $\Sigma^n V=\pi(V^n)$
is semi-algebraic. Similarly, $\overline{\Sigma^n V}=\Sigma^n(\overline{V})=\pi(\overline{V}^n)$ is semi-algebraic.
\item A {\em semi-algebraic function  } is defined to be a real valued semi-algebraic mapping. The semi-algebraic functions on a semi-algebraic set are closed under the standard
operations of taking sums, products, negatives etc, as well as under taking maxima or minima of finitely many such functions.
\item If $A$ is a non-empty semi-algebraic subset of $\rl^N$ then  $x\mapsto {\rm dist}(x,A)$ is  a continuous semi-algebraic function on $\rl^n$, 
which vanishes precisely on the closure of $A$.
\item ({\L}ojasiewicz inequality.) Let $B$ be a compact semi-algebraic subset of $\rl^N$ and let $f,g:B\to \rl$ be two continuous semi-algebraic 
functions such that $f^{-1}(0)\subset g^{-1}(0)$. Then there is an integer $P>0$ and a constant $C\in \rl$ such that $\abs{g}^P\leq C \abs{f}$ on $B$.
\end{enumerate}
We can now prove a special case of Theorem~\ref{thm-dbar}:
\begin{prop}\label{prop-dbarcirbdy}
If $V$ is a domain with circular boundary, then on $\Sigma^nV$ the $\dbar$-problem is globally regular.
\end{prop}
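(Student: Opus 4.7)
The strategy is to apply Dufresnoy's theorem (Result~\ref{thm-duf}) with $\Gamma = \overline{\Sigma^n V}$, so the task reduces to constructing a family $\{\Omega_\nu\}_{\nu=1}^\infty$ of pseudoconvex neighborhoods of $\Gamma$ satisfying the metric sandwich condition and the holomorphic-convexity condition. Since $V$ has circular boundary, I would use the natural one-parameter family of enlarged circular-boundary domains
\[
V_\epsilon \;=\; D(0, 1+\epsilon) \,\setminus\, \bigcup_{j=1}^{M} \overline{D(z_j, r_j - \epsilon)},
\]
defined for $0 < \epsilon < \min_j r_j$, then pick a sequence $\epsilon_\nu \downarrow 0$ and set $\Omega_\nu = \Sigma^n V_{\epsilon_\nu}$. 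By property~(e) of Section~\ref{sec-symmetrization} each $\Omega_\nu$ is pseudoconvex in $\cx^n$, and $\bigcap_\nu \Omega_\nu = \overline{\Sigma^n V}$.

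The first condition of Result~\ref{thm-duf} would be verified by a direct distance estimate in both directions. If $s = \pi(z) \in \Omega_\nu$ with $z \in V_{\epsilon_\nu}^n$, each $z_j$ lies within $\epsilon_\nu$ of $\overline{V}$, so there is $z^* \in \overline{V}^n$ with $|z - z^*| \leq \sqrt{n}\,\epsilon_\nu$; since $\pi$ is a polynomial map, hence Lipschitz on any bounded set, this gives $\dist(s,\Gamma) \leq C_1 \epsilon_\nu$. Conversely, if $\dist(s,\Gamma) < \delta$ with nearest point $s^* = \pi(z^*)$ and $z^* \in \overline{V}^n$, then the classical H\"older continuity of roots of monic polynomials (with exponent $1/n$) furnishes a preimage $z$ of $s$ ordered so that $|z - z^*| \leq C_2 \delta^{1/n}$; hence $z \in V_{C_2\delta^{1/n}}^n$ and $s \in \Sigma^n V_{C_2\delta^{1/n}}$. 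Choosing the sequence so that $C_2 \epsilon^{\theta/n} \leq \epsilon_\nu \leq \epsilon/C_1$, which is feasible for $\theta = n$ and $\epsilon$ small, yields $\Gamma(\epsilon^\theta) \subset \Omega_\nu \subset \Gamma(\epsilon)$.

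For the holomorphic-convexity condition my plan is a three-step argument. \emph{Step 1:} $\overline{V_{\epsilon_\nu}}$ is $\mathcal{O}(V_{\epsilon_\mu})$-convex whenever $\epsilon_\nu < \epsilon_\mu$; indeed $V_{\epsilon_\mu} \setminus \overline{V_{\epsilon_\nu}}$ is a disjoint union of $M+1$ annular shells, and each shell contains a full boundary circle of $V_{\epsilon_\mu}$ in its closure and so is not relatively compact in $V_{\epsilon_\mu}$, and the classical one-variable criterion (holomorphic convexity of a compact subset of a Stein planar domain is equivalent to the absence of relatively compact complement components) then applies. \emph{Step 2:} Products of $\mathcal{O}$-convex compacta are $\mathcal{O}$-convex, so $\overline{V_{\epsilon_\nu}}^n$ is $\mathcal{O}(V_{\epsilon_\mu}^n)$-convex. \emph{Step 3:} I would transfer through the finite proper surjective holomorphic quotient $\pi\colon V_{\epsilon_\mu}^n \to \Omega_\mu$, using that $\pi^{-1}(\overline{\Omega_\nu}) = \overline{V_{\epsilon_\nu}}^n$ by $S_n$-invariance. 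Given $p \in \Omega_\mu \setminus \overline{\Omega_\nu}$, the finite orbit $\pi^{-1}(p)$ is disjoint from $\overline{V_{\epsilon_\nu}}^n$, so by Bremermann--Oka on the Stein manifold $V_{\epsilon_\mu}^n$ there is a continuous plurisubharmonic separator; its maximum over the $S_n$-action is $S_n$-invariant, continuous, and still plurisubharmonic, hence descends to a continuous function on $\Omega_\mu$ that is plurisubharmonic off the branch locus of $\pi$. Continuous extension across this thin analytic set is automatically plurisubharmonic, and a second application of Bremermann--Oka on the pseudoconvex $\Omega_\mu$ converts the separator into a holomorphic one, establishing $\mathcal{O}(\Omega_\mu)$-convexity of $\overline{\Omega_\nu}$.

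The principal technical obstacle is Step~3: verifying that the descended $S_n$-invariant separator is genuinely plurisubharmonic on $\Omega_\mu$, not merely continuous, so that Bremermann--Oka is actually applicable. The other two ingredients are geometric or classical: the sandwich bound is a quantitative form of the H\"older continuity of algebraic roots, and Step~1 is transparent from the annular geometry of the complement between successive $V_\epsilon$'s.
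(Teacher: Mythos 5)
Your proof is correct, but it takes a genuinely different route from the paper's. The paper exploits the fact that a circular-boundary domain $V$ is \emph{semi-algebraic}: it defines a semi-algebraic subharmonic defining function $\psi$ for $V$, lifts it to a semi-algebraic plurisubharmonic function $f$ on a neighborhood of $\overline{\Sigma^n V}$ by taking $f(s)=\max_j\psi(z_j)$ over the roots $z_j$ of $q(s;\cdot)$, and then invokes the {\L}ojasiewicz inequality twice (comparing $f$ with the distance-to-$\Gamma$ function) to obtain the metric sandwich; the holomorphic-convexity hypothesis is then nearly automatic because the $\Omega_\nu$ are sublevel sets of the single psh function $f$. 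You instead use an explicit shrinking family $V_\epsilon$ of circular domains, and verify the sandwich by hand via the Lipschitz bound for $\pi$ and the H\"older-$1/n$ continuity of roots of monic polynomials, thereby dispensing with the {\L}ojasiewicz machinery entirely; the cost is that the convexity condition no longer comes for free, and you pay for it with a three-step argument that isolates the Runge-type annular criterion in one variable, $\mathcal{O}$-convexity of products, and a descent of plurisubharmonic separators through the finite quotient $\pi$. That descent step is exactly the content the paper dismisses as ``straightforward to verify'' when it asserts $f$ is plurisubharmonic on $\Sigma^n V$, so the two proofs ultimately rely on the same phenomenon; yours makes it explicit per separating function, the paper applies it once to $f$. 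Both approaches end by identifying psh-convexity with holomorphic convexity on a pseudoconvex domain (H\"ormander, Theorem 4.3.4).

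One small arithmetic slip: with your estimates you need $C_2\,\epsilon^{\theta/n}\le\epsilon/C_1$ to hold for small $\epsilon$, which requires $\theta/n>1$; $\theta=n$ would force $C_1C_2\le 1$, which need not hold. Taking any $\theta>n$ (for instance $\theta=n+1$) repairs this, and since Dufresnoy's hypothesis only asks for \emph{some} $\theta>0$, nothing is lost. It is also worth noting, for complete rigor in the second inclusion, that $\dist(z_j,\overline{V})\le\eta$ implies $z_j\in V_{\eta'}$ only for $\eta'>\eta$ (strict), but this again costs only a constant.
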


\begin{proof} We will verify that $\Gamma=\overline{\Sigma^n V}$ satisfies the hypotheses of  Result~\ref{thm-duf}
quoted at the beginning of this section.

Representing $V$ as in \eqref{eq-vrep},  
 consider the function $\psi$ defined in a neighborhood of $\overline{V}$
 by setting
 \begin{equation}\label{eq-shpsi}  \psi(z) = \max\left( \abs{z}, \max_{j=1,\dots, M}\frac{r_j}{\abs{z-z_j}} \right)-1 \end{equation}
Observe that $\psi<0$ is precisely the set $V$, and in a neighborhood of $\overline{V}$ the function $\psi$ is continuous, subharmonic  and  semi-algebraic. Define
$V_\nu= \left\{z\in\cx: \psi(z)<\frac{1}{\nu}\right\}$, and set
$
\Omega_\nu=\Sigma^n \left(V_\nu\right)=\pi\left((V_\nu)^n\right),
$
so that $\{\Omega_\nu\}$ is a system of pseudoconvex neighborhoods of $\Gamma$.  We need to verify the two conditions of Result~\ref{thm-duf}.

Define real valued functions $f,g$  as follows. Let $s$ be in a neighborhood of $\Gamma$, and let $z_1,\dots, z_n$ be the roots of the equation
$q(s;t)=0$, with $q$ as in \eqref{eq-q},  so that $\pi(z_1,\dots,z_n)=s$. Define $\displaystyle{ f(s) = \max_{j\in I_n} \psi(z_j,V).}$
It is straightforward to verify that this is a well-defined continuous
semi-algebraic plurisubharmonic  function on a  neighborhood $B$  of $\overline{\Sigma^n V}$.  After shrinking, the neighborhood  $B$ is compact and
semi-algebraic.  We let $g$ be the function on $B$ given by $ g(s) = \displaystyle{\dist(s,\Sigma^n V)},$
where on the right hand side the distance in $\cx^n$ is meant.  Observe that $f^{-1}(0)=g^{-1}(0)=\overline{\Sigma^n V}$,  the level set
$\{s\in B\colon f(s)<\frac{1}{\nu}\}$ is precisely the set $\Omega_\nu$ and the  level set $\{s\in B\colon g(s)<\epsilon\}$ is the $\epsilon$-neighborhood
$\Gamma(\epsilon)$ of $\Gamma=\overline{\Sigma^n V}$.

   Therefore, applying 
the {\L}ojasiewicz inequality twice we see that for $s\in B$, we have
\begin{equation}\label{eq-loja} C_1 f(s)^{N_1} \leq g(s) \leq  C_2 f(s)^{\frac{1}{N_2}},\end{equation}
where $N_1,N_2$ are positive integers and $C_1,C_2$ are positive constants. Given small $\epsilon>0$, choose the integer $\nu$ so  that
\[ \frac{1}{2} \left(\frac{\epsilon}{C_2}\right)^{N_2} <\frac{1}{\nu} <\left(\frac{\epsilon}{C_2}\right)^{N_2}.\]
Such $\nu$ exists for small $\epsilon$. Also, let $\theta$ be so large that for all small enough $\eta$, we have
\[ \frac{1}{C_1^{\frac{1}{N_1}}}\eta^{\frac{\theta}{N_1}}<  \frac{1}{2C_2^{N_2}}\eta^{N_2}.\]

Let $s\in \Omega_\nu$ so that $f(s)<\frac{1}{\nu}$. Then we have by the right half of \eqref{eq-loja}:
\[ g(s) < C_2 f(s)^{\frac{1}{N_2}}< \epsilon .\]
Also if $s\in \Gamma(\epsilon^\theta)$, i.e., $g(s)<\epsilon^\theta$, we have $ \displaystyle{f(s) < \frac{1}{\nu}},$
so that $\Gamma(\epsilon^\theta)\subset \Omega_\nu\subset \Gamma(\epsilon)$. This verifies the first condition.

Let $\nu>\mu$. Note that the domain $\Omega_\mu$ is identical to  $\{f<\frac{1}{\mu}\}$ and the subdomain $\Omega_\nu$ is similarly given by $\{f<\frac{1}{\nu}\}$.
Since $f$ is plurisubharmonic, it follows that the plurisubharmonic hull of $\overline{\Omega}_\nu$ in $\Omega_\mu$ is itself, i.e., $\overline{\Omega}_\nu$ is 
plurisubharmonically convex in $\Omega_\mu$.  Since $\Omega_\mu$ is pseudoconvex, the notions of plurisubharmonic convexity and holomorphic
convexity coincide on $\Omega_\mu$ (see \cite[Theorem~4.3.4]{hor}.) This establishes the second condition of Result~\ref{thm-duf}, and therefore the the proposition. 
\end{proof}

{\em Remarks:} (1) In fact the closed symmetrized polydisc $\Sigma^n\overline{\mathbb{D}}$ is polynomially convex in 
$\cx^n$ (see \cite[Theorem~1.6.24]{stout}.)\\

(2) The method of Proposition~\ref{prop-dbarcirbdy} may also be used to show that the $\dbar$-problem is globally regular on $\Sigma^n V$, when $V$ 
is a  {\em subanalytic} domain in $\cx$ (For definition and basic properties of subanalytic sets, see \cite{bimil}.) We need to replace
 the subharmonic function  $\psi$ of \eqref{eq-shpsi}  by the  function
 $\widetilde{\psi}(z)=\dist(z,V)$, where the usual distance in $\cx$ is meant. The function $\widetilde{\psi}$ is not subharmonic in general, but
if we define $f$ as in the proof above setting $f(s) =\max_{j\in I_n} \widetilde{\psi}(z_j,V)$,  it is not difficult to see that the sublevel sets are still pseudoconvex, and an argument parallel to the one given above, using a subanalytic version of the {\L}ojasiewicz inequality completes the proof.
\subsection{Proof of Theorem~\ref{thm-dbar}}
 For $q\geq 1$, let $f\in \mathcal{C}^\infty_{p,q}(\overline{\Sigma^nU})$ be a $(p,q)$-form with $\dbar f=0$. We need to show that the
$\dbar$-problem
\begin{equation}\label{eq-dbaru}
\dbar u=f \quad \text{such that}\quad u\in\mathcal{C}^\infty_{p,q-1}(\overline{\Sigma^nU})
\end{equation}
has a solution $u$  smooth up to the boundary. By well-known classical results in complex analysis of one variable (cf. \cite[Theorem~7.9]{conway} and 
\cite[Theorem IX.35, p. 424]{tsuji}),
since $U\Subset\cx$ with $\mathcal{C}^\infty$-smooth boundary,  there exists a biholomorphic map  $\varphi$ from a domain $V$ with 
circular boundary to $U$ which  extends to a $\mathcal{C}^\infty$ diffeomorphism from $\overline{V}$ to $\overline{U}$. 
From Theorem~\ref{thm-akn}, it follows that
$\Sigma^n\varphi:\Sigma^nV\to\Sigma^nU$ extends as a  $\mathcal{C}^\infty$ map  up to $\overline{\Sigma^nV}$. 
Since $\varphi$ is a biholomorphism,  the same argument applied to  $\varphi^{-1}$ shows that $\Sigma^n\varphi^{-1}:\Sigma^nU\to\Sigma^nV$ is holomorphic map that extends smoothly 
from $\overline{\Sigma^nU}$ to $\overline{\Sigma^nV}$. We observe that 
$
(\Sigma^n\varphi)^{-1}=\Sigma^n\varphi^{-1}
$
on  $\overline{\Sigma^nU}$, so that $\Sigma^n \varphi$ is a biholomorphism.

The pull back $(\Sigma^n\varphi)^*f$ of $f$ by $\Sigma^n\varphi$,  is therefore a form  in $\mathcal{C}^\infty_{p,q}(\overline{\Sigma^nV})$. 
Denoting  $g=(\Sigma^n\varphi)^*f$,  and noting that $\dbar g=\dbar (\Sigma^n\varphi)^*f=(\Sigma^n\varphi)^*(\dbar f)=0$,
  we consider the $\dbar$-problem $\dbar v=g$ on $\Sigma^n V$. 
By Proposition~\ref{prop-dbarcirbdy} we obtain that the $\dbar$-problem is globally regular on $\Sigma^nV$, and therefore,
there exists a $v\in\mathcal{C}^\infty_{p,q-1}(\overline{\Sigma^nV})$ such that $\dbar v=g$.  We now let 
$u=(\Sigma^n\varphi^{-1})^*v.$  This is a solution of \eqref{eq-dbaru} that is in $\mathcal{C}^\infty_{p,q-1}(\overline{\Sigma^nU})$.
This completes the proof of Theorem~\ref{thm-dbar}.
\subsection{Non-Lipschitz nature of the boundary of a symmetric product}
As a consequence of Theorem~\ref{thm-akn},  and the use of the model \eqref{eq-vrep} 
we can show that the boundary of $\Sigma^n U$ is not Lipschitz:
\begin{prop}\label{prop-unonlip}Let $U\Subset\cx$ be a domain with $\mathcal{C}^\infty$ boundary. For $n\geq 2$,  the symmetric product
$\Sigma^n U$ does not have Lipschitz boundary.\end{prop}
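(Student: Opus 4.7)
The plan is to reduce, via two steps, to the known failure of the Lipschitz property for $\Sigma^2\mathbb{D}$ at a boundary point of the form $\pi(\zeta,\zeta)$ with $\abs{\zeta}=1$ (Lemma~\ref{lem-kos}): first, a conformal change of model to a circular-boundary domain; second, a local splitting at a carefully chosen boundary point of $\Sigma^nV$ that factors off a smooth $(n-2)$-dimensional part. For the first step, classical function theory furnishes a biholomorphism $\varphi\colon V\to U$ with $V$ a domain with circular boundary as in \eqref{eq-vrep}, extending to a $\mathcal{C}^\infty$-diffeomorphism $\overline V\to\overline U$. Applying Theorem~\ref{thm-akn} to both $\varphi$ and $\varphi^{-1}$, the induced map $\Sigma^n\varphi$ is a $\mathcal{C}^\infty$-diffeomorphism of $\overline{\Sigma^nV}$ onto $\overline{\Sigma^nU}$. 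Because bi-Lipschitz equivalence preserves the Lipschitz character of boundaries and a smooth diffeomorphism between compact sets is bi-Lipschitz, it suffices to show that $\bd\Sigma^nV$ is not Lipschitz.

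For the splitting step, fix $p_0\in\bd V$ and pairwise distinct interior points $a_3,\dots,a_n\in V\setminus\{p_0\}$, and set $s^*=\pi(p_0,p_0,a_3,\dots,a_n)\in\bd\Sigma^nV$. Choose a small disc $D_0$ centered at $p_0$ and pairwise disjoint small discs $D_j\Subset V$ centered at $a_j$, with $\overline{D_0}\cap\overline{D_j}=\emptyset$ for each $j\geq 3$. Adapting the residue-theoretic construction of Lemma~\ref{lem-localbiholo}, applied simultaneously to the contours $\bd D_0,\bd D_3,\dots,\bd D_n$ (each contour recovering, through power-sum integrals $\frac{1}{2\pi i}\int t^k q'(s;t)/q(s;t)\,dt$ together with Newton's identities, the elementary symmetric functions of those roots of $q(s;t)$ enclosed by it), I would construct a local biholomorphism
\[
\Psi\colon\mathcal{N}(s^*)\longrightarrow \Sigma^2 D_0\times D_3\times\cdots\times D_n
\]
from a neighborhood $\mathcal{N}(s^*)$ of $s^*$ in $\cx^n$, sending $\mathcal{N}(s^*)\cap\Sigma^nV$ biholomorphically onto $\Sigma^2(D_0\cap V)\times D_3\times\cdots\times D_n$. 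Since each $D_j$ for $j\geq 3$ is smoothly bounded, the Lipschitz property of $\bd\Sigma^nV$ at $s^*$ is equivalent to that of $\bd\Sigma^2(D_0\cap V)$ at $\pi(p_0,p_0)$.

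Because $\bd V$ is an arc of a circle near $p_0$, the region $D_0\cap V$ is biholomorphic, with smooth boundary extension, to an analogous region of $\mathbb{D}$ about some $\zeta\in\bd\mathbb{D}$; a second use of Theorem~\ref{thm-akn} in dimension two then identifies $\bd\Sigma^2(D_0\cap V)$ near $\pi(p_0,p_0)$ with $\bd\Sigma^2\mathbb{D}$ near $\pi(\zeta,\zeta)$ up to a $\mathcal{C}^\infty$-diffeomorphism of closures. Lemma~\ref{lem-kos} asserts that the latter is not Lipschitz, completing the chain of reductions. The step I expect to be the main obstacle is the rigorous construction of the splitting map $\Psi$: one must verify that the grouping of the $n$ roots of $q(s;t)$ into ``a pair near $p_0$'' and ``singletons near each $a_j$'' depends holomorphically on $s\in\mathcal{N}(s^*)$, that $\Psi$ is a biholomorphism onto the stated product, and that the preimage of $\Sigma^nV$ under $\Psi^{-1}$ is precisely $\Sigma^2(D_0\cap V)\times D_3\times\cdots\times D_n$. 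All three points should follow from a direct adaptation of the proof of Lemma~\ref{lem-localbiholo}, now with one root-cluster of multiplicity two lying on $\bd V$.
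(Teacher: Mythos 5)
Your proposal is correct, and the first reduction (conformal equivalence to a circular-boundary domain $V$ plus Theorem~\ref{thm-akn}, transferring the Lipschitz question to $\bd\Sigma^n V$) is identical to the paper's. You diverge in the second step. You build a \emph{local} biholomorphism near $s^*=\pi(p_0,p_0,a_3,\dots,a_n)$ that splits a neighborhood of $s^*$ in $\Sigma^n V$ into $\Sigma^2(D_0\cap V)\times D_3\times\cdots\times D_n$ by residue-theoretic separation of root clusters, adapting Lemma~\ref{lem-localbiholo}; the paper instead normalizes so that $0\in V$, writes down the \emph{global} polynomial map $\Phi:\cx^2\times\cx^{n-2}\to\cx^n$ determined by $\Phi\bigl(\pi^2(z_1,z_2),\pi^{n-2}(z_3,\dots,z_n)\bigr)=\pi^n(z_1,\dots,z_n)$ (a convolution of coefficients), and verifies by direct computation that the Jacobian matrix of $\Phi$ at the single point $P=\bigl((2,1),0\bigr)$ is upper triangular with $1$'s on the diagonal, so $\Phi$ is a local biholomorphism there pushing the non-Lipschitz boundary point $(2,1)$ of $\Sigma^2 V\times\Sigma^{n-2}V$ forward to $\bd\Sigma^n V$. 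Each route has its merits: the paper's map comes with an explicit closed-form formula, so the Jacobian check is a two-line computation and no argument about the holomorphy of root-cluster assignments is required; your version avoids the $0\in V$ normalization and works at any boundary point, at the cost of carrying out the Newton-identity bookkeeping you flag as the main obstacle. Two details worth tightening in your write-up: first, if you choose $p_0$ on the outer circle $\bd\mathbb{D}$, then $D_0\cap V$ coincides with $\mathbb{D}$ in a neighborhood of $p_0$, so the final step is a local identification of boundaries rather than a second application of Theorem~\ref{thm-akn}; this sidesteps the issue that $D_0\cap V$ has corners where $\bd D_0$ meets $\bd V$ and so is not a $\mathcal{C}^\infty$-bounded domain to which Theorem~\ref{thm-akn} applies globally. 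Second, since Lemma~\ref{lem-kos} is stated at the specific point $(2,1)=\pi(1,1)$, you should note that the rotation $z\mapsto\overline{\zeta}z$ of $\mathbb{D}$ induces the linear automorphism $(s_1,s_2)\mapsto(\overline{\zeta}s_1,\overline{\zeta}^2 s_2)$ of $\Sigma^2\mathbb{D}$ carrying $\pi(\zeta,\zeta)$ to $(2,1)$, which is what lets you invoke the lemma at an arbitrary boundary point of the unit circle.
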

We begin with a special case of this proposition, which was shown to us by by {\L}ukasz Kosi\'{n}ski:
\begin{lem}[\cite{kos}]\label{lem-kos} Proposition~\ref{prop-unonlip}  holds for $\Sigma^2 \mathbb{D}$. In fact near the point $(2,1)\in \bd\Sigma^2\mathbb{D}$ the 
boundary fails to be Lipschitz.\end{lem}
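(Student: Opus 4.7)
The plan is to show that the \emph{interior cone condition} fails at $p = (2,1) = \pi(1,1) \in \bd\Sigma^2\mathbb{D}$. Recall that if a domain $\Omega \subset \rl^N$ has Lipschitz boundary at a point $p$, then locally, after a rigid motion, $\Omega$ is the strict subgraph of a Lipschitz function $L$ with $L(0)=0$, and consequently the set
\[
\mathcal{C}_p \;=\; \bigl\{\, v \in \rl^N : p + tv \in \Omega \text{ for all sufficiently small } t > 0\,\bigr\}
\]
contains an $N$-dimensional open cone. I will show that at $p=(2,1)$ the corresponding $\mathcal{C}_p$ is contained in a real $3$-dimensional affine hyperplane of $\rl^4 = \cx^2$, ruling out any such open $4$-dimensional cone.

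The starting observation is that $(s_1,s_2) \in \Sigma^2\mathbb{D}$ if and only if both roots of $q(s;\tau) = \tau^2 - s_1\tau + s_2$ lie in $\mathbb{D}$. For $v = (v_1,v_2) \in \cx^2$, substituting $s = p + tv$ and applying the quadratic formula gives
\[
\tau_\pm(t) \;=\; 1 + \tfrac{1}{2}tv_1 \pm \sqrt{t}\,\sqrt{v_1 - v_2} + O(t^{3/2}).
\]
The $\sqrt{t}$ scale is forced by the simple branching of $\pi$ along the diagonal $\{z_1=z_2\}$, where $\det d\pi = z_2 - z_1$ vanishes; this branching is the geometric source of the cusp that will appear in $\bd\Sigma^2\mathbb{D}$ at $p$. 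Writing $\sqrt{v_1-v_2} = \alpha + i\beta$ (the choice of branch only swaps the labels $+$ and $-$), a routine expansion yields
\[
|\tau_\pm(t)|^2 \;=\; 1 \pm 2\sqrt{t}\,\alpha + t\bigl(\Re v_1 + \alpha^2 + \beta^2\bigr) + O(t^{3/2}).
\]

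The condition $v \in \mathcal{C}_p$ is equivalent to $|\tau_+(t)|,|\tau_-(t)| < 1$ for all small $t > 0$, and the leading $\pm 2\sqrt{t}\,\alpha$ for the two signs cannot both be nonpositive unless $\alpha = 0$; this forces $v_1 - v_2 \in \rl_{\le 0}$. At the next order the condition then reduces to $\Re(v_2) < 0$. In particular, every admissible $v$ satisfies $\Im(v_1) = \Im(v_2)$, so $\mathcal{C}_p$ lies in the real $3$-dimensional hyperplane $\{v \in \cx^2 : \Im(v_1) = \Im(v_2)\}$ of $\rl^4$ and cannot contain any $4$-dimensional open cone, contradicting the cone condition.

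The main technical point I expect is making the $\sqrt{t}$-asymptotics uniform in $v$ on a small sphere in $\rl^4$, so that the higher-order $t, t^{3/2}$ corrections cannot rescue a full cone of admissible directions. This should be straightforward, since the $\sqrt{t}$ term is a single dominant monomial that cannot be cancelled by subsequent powers; once uniformity is established the conclusion follows immediately.
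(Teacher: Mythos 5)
Your argument is correct, and it reaches the same algebraic conclusion as the paper's proof (the admissible directions at $(2,1)$ must satisfy $\Im v_1 = \Im v_2$, so the local cone degenerates to dimension at most~$3$), but by a genuinely different route. The paper substitutes the line segment $\ell(t)=(2-bt,\,1-at)$ directly into the Agler--Young defining inequality $|s_1-\overline{s_1}s_2|+|s_2|^2<1$ for the symmetrized bidisc, then divides by $t$ and lets $t\to 0^+$ to extract the linear constraint $|a - i\Im b|\le \Re a$, hence $\Im a=\Im b$. You instead work from the raw definition of $\Sigma^2\mathbb{D}$ in terms of the roots of $\tau^2-s_1\tau+s_2$ and expand the roots by the quadratic formula, getting the $\sqrt{t}$-asymptotics; the constraint $\alpha=\Re\sqrt{v_1-v_2}=0$ then forces $v_1-v_2\in\rl_{\le 0}$. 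The paper's route is cleaner (no fractional powers, a single linear limit) but leans on a non-trivial explicit characterization of $\Sigma^2\mathbb{D}$; your route uses only the definition and makes the geometric mechanism transparent, namely the square-root branching of $\pi$ along the diagonal $\{z_1=z_2\}$, which is the real source of the cusp. One small simplification you could make: the uniformity of the $O(t^{3/2})$ remainder, which you flag as the technical crux, is not actually needed. Once you know every $v\in\mathcal{C}_p$ satisfies $\Im v_1=\Im v_2$, you are done, since an open $4$-dimensional cone necessarily contains a single direction $v^0$ with $\Im v^0_1\ne\Im v^0_2$, and your pointwise leading-term argument already shows $p+tv^0\notin\Sigma^2\mathbb{D}$ for small $t>0$, contradicting the cone condition.
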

\begin{proof} It is well-known (see \cite{gamma}) that we can represent the domain  $\Sigma^2\mathbb{D}$ as
\begin{equation}\label{eq-symbidisc}
\left\{(s_1,s_2)\in \cx^2\colon\abs{s_1-\overline{s_1}s_2} + \abs{s_2}^2 <1\right\}.
\end{equation}
We will show that there is no four dimensional cone with vertex at $(2,1)\in \bd \Sigma^2\mathbb{D}$ inside the symmetrized bidisc.
Let $\ell$ be a parametrized line segment contained in the domain $\Sigma^2\mathbb{D}$ passing through the point $(2,1)$, 
given by $\ell(t)=(2-bt, 1-at) ,$
where $t\in (0,r)$, $r>0$, and $a,b\in \cx$. In view of \eqref{eq-symbidisc}, we get that 
\[
\abs{2-bt-(2-\overline{b}t)(1-at)}+\abs{1-at}^2<1.
\]
Computing the above expression we have 
\[
\abs{-2it\Im b+2at-a\overline{b}t^2}-2t\Re a+\abs{a}^2t^2<0.
\]
Dividing the above expression by $t$ and then taking limit as $t\to 0^+$ we get $
\abs{a-i\Im b}\leq \Re a, $
which is possible only when $\Im a=\Im b$. This forces any  cone inside $\Sigma^2\mathbb{D}$ with vertex at $(2,1)$ 
to be at most of dimension three, and consequently, the domain cannot be Lipschitz near $(2,1)$.
\end{proof}

Now let $V$ be a domain with circular boundary, represented as in \eqref{eq-vrep}. We can strengthen the above lemma in the following way:

\begin{lem} For $n\geq 2$, the domain  $\Sigma^n{V}$ is not Lipschitz.
\end{lem}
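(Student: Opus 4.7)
The plan is to reduce the statement to Lemma~\ref{lem-kos} by two successive local biholomorphisms. First, I fix a point $b\in \bd V$ lying on the outer unit circle, and pick distinct interior points $w_3,\dots,w_n\in V$, each different from $b$. Set $s^*=\pi(b,b,w_3,\dots,w_n)\in \bd\Sigma^n V$, $\sigma^*=(2b,b^2)$, and $e^*=\pi_{(n-2)}(w_3,\dots,w_n)$.

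The first reduction (from $n$ to $2$) uses the polynomial map $H\colon \cx^2\times \cx^{n-2}\to \cx^n$ sending $(\sigma,e)$ to the elementary symmetric polynomials of the roots of the product $(t^2-\sigma_1 t+\sigma_2)\cdot(t^{n-2}-e_1 t^{n-3}+\dots+(-1)^{n-2}e_{n-2})$. At $(\sigma^*,e^*)$ the two factors are $(t-b)^2$ and $\prod_{j\ge 3}(t-w_j)$, which are coprime; a standard computation shows the Jacobian of $H$ at such a pair equals (up to sign) the resultant, nonzero precisely in the coprime case, so $H$ is a local biholomorphism at $(\sigma^*,e^*)$. An implicit-function-theorem argument (tracking the simple roots $w_j(s)$ via Lemma~\ref{lem-localbiholo} and the pair of roots bifurcating from $b$ via their holomorphic elementary symmetric functions in $s$) then shows that, on small neighborhoods, $H$ restricts to a bijection between $\overline{\Sigma^2 V}\times\Sigma^{n-2}V$ and $\overline{\Sigma^n V}$. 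Because the $w_j^*$ are distinct, $\Sigma^{n-2}V$ is a smooth open subset of $\cx^{n-2}$ near $e^*$ (again by Lemma~\ref{lem-localbiholo}), so the Lipschitz character of the product at $(\sigma^*,e^*)$ coincides with that of $\overline{\Sigma^2 V}$ at $\sigma^*$. For $n=2$, this step is vacuous.

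The second reduction (from $V$ to $\mathbb{D}$) uses the linear self-map $\Psi(s_1,s_2)=(bs_1,b^2 s_2)$ of $\cx^2$; since $|b|=1$, this is exactly $\Sigma^2$ of the rotation $z\mapsto bz$, so it is a biholomorphism sending $\Sigma^2\mathbb{D}$ to itself and $(2,1)$ to $(2b,b^2)$. Because the inner discs of $V$ are bounded away from the unit circle, a sufficiently small neighborhood $U_b$ of $b$ satisfies $U_b\cap V=U_b\cap\mathbb{D}$, and hence the sets $\overline{\Sigma^2 V}$ and $\overline{\Sigma^2\mathbb{D}}$ coincide in a neighborhood of $(2b,b^2)$. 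Therefore $\Psi$ yields a polynomial bi-Lipschitz bijection from a neighborhood of $(2,1)$ in $\overline{\Sigma^2\mathbb{D}}$ onto a neighborhood of $(2b,b^2)$ in $\overline{\Sigma^2 V}$. Lemma~\ref{lem-kos} says the former is non-Lipschitz at $(2,1)$; transport yields non-Lipschitzness of $\overline{\Sigma^2 V}$ at $\sigma^*$, and the first reduction then yields non-Lipschitzness of $\overline{\Sigma^n V}$ at $s^*$.

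The main obstacle is verifying the claimed bijective restriction in the first reduction: the Jacobian computation for $H$ is elementary, but one must carefully check that the local inverse of $H$ preserves the boundary structure, i.e., that for $s$ near $s^*$ the condition $s\in\overline{\Sigma^n V}$ is equivalent to $(\sigma,e)=H^{-1}(s)$ satisfying $\sigma\in\overline{\Sigma^2 V}$ and $e\in\Sigma^{n-2}V$. The key observation enabling this is that in a small enough neighborhood of $(b,b,w_3,\dots,w_n)\in \cx^n$, the coordinates $z_3,\dots,z_n$ of any preimage automatically remain in the interior of $V$, so the $\bar V^n$-constraint reduces to a constraint on the pair of roots near $b$, which is precisely $\sigma\in \overline{\Sigma^2 V}$.
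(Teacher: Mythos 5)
Your proof is correct and follows essentially the same strategy as the paper: reduce to the known non-Lipschitz point of $\Sigma^2\mathbb{D}$ at $(2,1)$ (Lemma~\ref{lem-kos}) via the "glue two symmetric products" map $H\colon \cx^2\times\cx^{n-2}\to\cx^n$, which is the paper's map $\Phi$. The paper streamlines your argument in two ways: it normalizes $V$ so that $0\in V$ and then takes the base point to be $P=(2,1)\times 0$, i.e.\ $b=1$ and $w_3=\dots=w_n=0$; this makes the second reduction (your rotation $\Psi$) vacuous since $\sigma^*=(2,1)$ is already Kosi\'nski's point, and it makes the Jacobian $\Phi'(P)$ visibly an upper-triangular matrix with unit diagonal, so the resultant identity you invoke is not needed. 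Your resultant observation (Jacobian of the "multiply monic polynomials" map equals $\pm\mathrm{Res}$, nonzero when the factors are coprime) is a cleaner conceptual explanation of why the same argument works at any base point with the roots split into two coprime clusters, at the modest cost of citing a classical fact that the paper avoids by a one-line direct computation. You are also more explicit than the paper on two points it treats as understood: that $\overline{V}$ and $\overline{\mathbb{D}}$ coincide near the outer circle so $\overline{\Sigma^2 V}$ and $\overline{\Sigma^2\mathbb{D}}$ coincide near $(2b,b^2)$, and that the local inverse of $H$ maps the relevant neighborhood of $\overline{\Sigma^n V}$ onto a neighborhood in $\overline{\Sigma^2 V}\times\Sigma^{n-2}V$ (because the $n-2$ roots tracking $w_3,\dots,w_n$ stay interior). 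These are genuine verifications, correctly carried out; they are implicit in the paper's appeal to $\Phi$ being a local biholomorphism together with the inclusion $\Sigma^2 V\subset\Sigma^2\mathbb{D}$.
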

\begin{proof}Consider the polynomial 
map $\Phi$ from $\cx^2\times \cx^{n-2}$ to $\cx^n$ given in the following way. For $s\in \cx^2$,
let $(z_1,z_2)\in\cx^2$ be such that $\pi^2(z_1,z_2)=s$, where $\pi^2:\cx^2\to \cx^2$ is the symmetrization map. Similarly for $\sigma\in\cx^{n-2}$, let 
$(z_3,\dots,z_n)\in\cx^{n-2}$ be such that $\pi^{n-2}(z_3,\dots, z_n)=\sigma$, where again $\pi^{n-2}:\cx^{n-2}\to \cx^{n-2}$ is the symmetrization
map on $\cx^{n-2}$. We define $\Phi(s,\sigma)=\pi^n(z_1,\dots, z_n)$, where $\pi^n$ is the symmetrization map on $\cx^n$. A calculation shows
that if we set $\sigma_0=1$, and $\sigma_{-1}=\sigma_{n-1}=\sigma_n=0$, the components of the map $\Phi=(\Phi_1,\dots,\Phi_n)$ are given by
\[ \Phi_k(s,\sigma)= s_2 \sigma_{k-2}+s_1\sigma_{k-1}+\sigma_k.\]
Taking derivatives, we see that the Jacobian matrix of $\Phi$ is given by the $n\times n$ matrix
\[ \Phi\rq{}(s,\sigma)
= \begin{pmatrix}
\sigma_0 & \sigma_{-1}	& 1 & 0 & \dots&\dots&0\\
\sigma_1&\sigma_0  	      &s_1 & 1 &\dots&\dots&0\\
\sigma_2&\sigma_1		      &s_2& s_1&1&\dots&0\\
\vdots&\vdots&\vdots&\vdots&\vdots&\vdots&\vdots\\
\sigma_{n-3}&\sigma_{n-2}& 0&  \dots &s_2&s_1&1\\
\sigma_{n-2}&\sigma_{n-3}& 0&  \dots&\dots &s_2&s_1\\
\sigma_{n-1}&\sigma_{n-2}& 0&  0&\dots &0&s_2\\
\end{pmatrix}
\]

Without loss of generality, we can assume that $0\in V$. If this is not already the case, we can apply a holomorphic automorphism
of the unit disc mapping some point $p\in V$ to 0, and replace $V$ by the image under this automorphism which is still a domain with circular boundary.

 Note that then the origin of $\cx^{n-2}$ is an interior point of the domain $\Sigma^{n-2}V$.
It is clear that $\Phi(\Sigma^2 V\times\Sigma^{n-2}V)= \Sigma^n V$.

Let $P=(2,1)\times 0 \in  \cx^2\times\cx^{n-2}$ . Since $(2,1)\in \bd \Sigma^2 V$ and $ 0\in \Sigma^{n-2}V$, it is clear that  
$P\in \bd (\Sigma^2 V\times\Sigma^{n-2}V)$, and therefore  $\Phi(P)\in
\bd \Sigma^n V$.  By the previous lemma, $P$ is a non-Lipschitz point of the boundary of $\Sigma^2 V\times\Sigma^{n-2}V$.
Therefore, it suffices to show that at the point $P$
the map $\Phi$ is a local biholomorphism.

Noting that we have $\sigma_0=1$, and $\sigma_{-1}=\sigma_{n-1}=\sigma_n=0$, and substituting the coordinates of $P$ as 
 $\sigma=(\sigma_1,\dots,\sigma_{n-2})=0$, and $(s_1,s_2)=(2,1)$, we see that
$\Phi\rq{}(P)=\Phi\rq((2,1),0)$ is an upper triangular matrix with main diagonal 
$(1,1, 1,\dots,1)$. Therefore we have that  $\det \Phi\rq{}(P)=1$, and therefore
$\Phi$ is a local biholomorphism in a neighborhood of $P$.
\end{proof}

\begin{proof}[Proof of Proposition~\ref{prop-unonlip}] We proceed as in the proof of Theorem~\ref{thm-dbar}. Let $V$ be a domain with circular boundary such that there is a biholomorphic
map $\varphi:V\to U$ which extends as a $\mathcal{C}^\infty$ diffeomorphism from $\overline{V}$ to $\overline{U}$. Then by 
Theorem~\ref{thm-akn}, we have an induced diffeomorphism $\Sigma^n \varphi:\Sigma^n \overline{V} \to \Sigma^n\overline{U}$, which is 
biholomorphic in the interior. Consequently we obtain that  the boundary of $\Sigma^n U$ is diffeomorphic to  the
boundary of $\Sigma^n V$, and the result follows since the boundary of $\Sigma^n V$ is not Lipschitz by the previous lemma.
\end{proof}

\section{Proof of Theorem~\ref{thm-proper}}
We begin with the following lemma:
\begin{lem}\label{lem-bdy} Let $m\geq 1$ and let $\Omega\subset \cx^m$ and $V\subset\cx$ be domains, and let 
$\varphi:\Omega \to\cx^n$ be a holomorphic mapping whose 
image lies in $\bd (\Sigma^n V)$. Then there is a point $c\in \bd V$ such that
for all $z\in \Omega$ we have $q(\varphi(z);c)=0$, where $q$ is as in \eqref{eq-q}.
\end{lem}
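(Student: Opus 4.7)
The plan is to analyze the closed analytic subset
\[
X = \{(z,t) \in \Omega \times \cx : q(\varphi(z);t) = 0\}
\]
of $\Omega \times \cx$ and show that at least one of its irreducible components must be of the form $\Omega \times \{c\}$ with $c \in \bd V$. Since $X$ is the zero set of a single holomorphic function that is monic of degree $n$ in $t$, it is closed and has pure dimension $m$. The standard bound on the roots of a monic polynomial in terms of its coefficients shows that the projection $p\colon X \to \Omega$ is proper, with fibres of cardinality at most $n$.

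By Remmert's proper mapping theorem together with a dimension count, each irreducible component $X_\alpha$ of $X$ projects surjectively onto $\Omega$, and there can be at most $n$ such components. For each $\alpha$, the holomorphic function $t|_{X_\alpha}$ takes values in $\overline V$. Either $t|_{X_\alpha}\equiv c_\alpha$ is constant, in which case irreducibility forces $X_\alpha = \Omega \times \{c_\alpha\}$ with $c_\alpha \in \overline V$; or $t|_{X_\alpha}$ is non-constant.

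In the non-constant case I would show that the projection $p(X_\alpha \cap (\Omega \times \bd V))$ is nowhere dense in $\Omega$. Two ingredients combine here. First, the open mapping theorem for non-constant holomorphic functions on a connected complex manifold of positive dimension, applied on the smooth locus $X_\alpha^{\mathrm{reg}}$, combined with the fact that $\bd V$ has empty interior in $\cx$, forces the closed set $(t|_{X_\alpha^{\mathrm{reg}}})^{-1}(\bd V)$ to have empty interior in $X_\alpha^{\mathrm{reg}}$. Second, outside the branch locus of $p|_{X_\alpha}$---a proper analytic subset of $\Omega$---the map $p$ restricts to a finite \'etale cover, and the image of a closed nowhere-dense set under such a cover is seen, by working locally sheet by sheet, to be nowhere dense. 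The leftover contributions coming from the singular locus of $X_\alpha$ and from the branch locus of $p$ are themselves nowhere dense by Remmert's theorem and by dimension considerations.

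To conclude, the hypothesis $\varphi(\Omega) \subset \bd(\Sigma^n V)$ translates, via the identification $\overline{\Sigma^n V} = \pi(\overline V^n)$, into the statement that for every $z \in \Omega$ the polynomial $q(\varphi(z);\cdot)$ has all roots in $\overline V$ with at least one root on $\bd V$; equivalently, $p(X \cap (\Omega \times \bd V)) = \Omega$. If no irreducible component of $X$ were of the form $\Omega \times \{c\}$ with $c \in \bd V$, this identity would express $\Omega$ as a finite union of closed nowhere-dense sets (the constant components with $c_\alpha \in V$ contribute nothing, and the non-constant ones contribute nowhere-dense projections by the previous paragraph), which is impossible for a non-empty open subset of $\cx^m$. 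The resulting contradiction yields the desired $c \in \bd V$. I expect the main difficulty to lie in the non-constant step, where one must combine the open-mapping argument with the branched-cover structure of $p|_{X_\alpha}$ to pass from empty interior in $X_\alpha$ to nowhere density in $\Omega$, since $\bd V$ is only assumed to be the topological boundary of a domain and $p|_{X_\alpha}$ may ramify.
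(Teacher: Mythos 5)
Your proposal is correct, and it is a genuinely different route from the paper's. The paper uses its earlier stratification of $\cx^n$ by the sets $\pi({\mathsf V}_k)$ (Section~\ref{sec-strata}): it finds a $k$ and a small open $\omega\subset\Omega$ where $\varphi(\omega)\subset\pi({\mathsf V}_k)$, applies the local inverse of $\pi|_{{\mathsf V}_k}$ from Lemma~\ref{lem-localbiholo} to lift $\varphi$ on $\omega$ to a holomorphic $\Phi:\omega\to\cx^n$ with image in $\bd(V^n)$, observes that $\omega=\bigcup_j \Phi_j^{-1}(\bd V)$ is a finite union of relatively closed sets so one of them has interior, and then invokes the open mapping theorem (plus $\bd V$ having empty interior) to force one component $\Phi_j$ to be constant with value $c\in\bd V$; the identity theorem then propagates $q(\varphi(\cdot);c)\equiv 0$ to all of $\Omega$. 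You instead work entirely downstairs with the root variety $X=\{q(\varphi(z);t)=0\}\subset\Omega\times\cx$, using properness of the projection, Remmert's mapping theorem, and the branched-cover structure to show that a non-constant irreducible component can only hit $\Omega\times\bd V$ over a nowhere-dense subset of $\Omega$, so some component must be a horizontal slice $\Omega\times\{c\}$ with $c\in\bd V$. Both arguments reduce to the same core fact --- a holomorphic function taking values in $\bd V$ on a set with interior must be constant, since $\bd V$ has empty interior --- but the paper localizes and lifts through $\pi$, whereas you globalize in the root variety. The paper's proof is shorter and leans on the stratification machinery it has already set up; yours is more self-contained, avoiding the stratification entirely, at the cost of invoking heavier tools (irreducible components, Remmert, local triviality of the covering away from the branch locus) and some care with nowhere-density across the singular and branch loci, which you correctly flag as the technical crux.
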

\begin{proof}We use the notation of Section~\ref{sec-strata}. Since $\cx^n$ is the disjoint union of the sets $\pi({\mathsf V}_k)$, where $k\in I_n$, it follows
that at least one of the sets $\varphi^{-1}(\pi({\mathsf V}_k))\subset\Omega$ has an interior. Therefore there is a $k\in I_n$ and  a  $p$ in $\Omega$ such
 that there is a neighborhood of $p$ which is mapped by $\varphi$ into $\pi({\mathsf V}_k)$. Since $\pi$ is a local biholomorphism from 
 ${\mathsf V}_k$ onto $\pi({\mathsf V}_k)$, we see that there is a holomorphic map $\psi$ from a neighborhood of $\varphi(p)$ in $\pi({\mathsf V}_k)$
 to ${\mathsf V}_k$ such that $\pi\circ \psi$ is the identity in a neighborhood of $\varphi(p)$ in $\pi({\mathsf V}_k)$. Let $\Phi=\psi\circ \varphi$.
 Then $\Phi$ is a map from a neighborhood $\omega$ of $p$ in $\cx^m$ such that $\pi\circ \Phi=\varphi$ on $\omega$. It follows that the image of $\Phi$
 lies in $\bd V^n$.
 For $j\in I_n$, denote by $\omega_j$ the set of points $\Phi^{-1}_j(\bd V)$, i.e., the points in $\omega$ mapped by the $j$-th component of $\Phi$ to $\bd V$.
 Since the image of $\Phi$ lies in $\bd V^n$, it follows that $\bigcup\limits_{j=1}^n \omega_j=\omega$. Therefore, there is at least one $j\in I_n$
 such that  $\omega_j$ has 
 an interior, which by the open mapping theorem implies that $\Phi_j$ is a constant, whose value $c$ is in $\bd V$. For a fixed $s\in \cx^n$
 the roots of the equation  $q(s;t)=0$ are the coordinates of the points in $\pi^{-1}(s)$. It follows therefore that $q(\varphi(z);c)=0$ if $z\in \omega$. Since
 the function $z\mapsto q(\varphi(z);c)$ is holomorphic on $\Omega$, the result follows by the identity theorem.
\end{proof}

 \begin{proof}[Proof of Theorem~\ref{thm-proper}]
We proceed along the lines of the proof of the Remmert-Stein theorems  (cf. \cite[pp.~71--77]{nar}.) Consider a sequence 
$\{w^\nu\}\subset U_n$ such that $w^\nu\to w^0\in\bd U_n$. Denote by $G_{n-1}$ the domain $U_1\times\dots\times U_{n-1}$ in $\cx^{n-1}$, 
and define  the maps $\varphi^\nu$ on $G_{n-1}$ by 
\begin{equation}\label{eq-phinu}
\varphi^\nu('z)=f('z,w^\nu),
\end{equation}
where $'z=(z_1,\dots,z_{n-1})\in G_{n-1}$.
Since $f$ is bounded, the sequence $\{\varphi^\nu\}$ is  uniformly bounded. Hence, by Montel's theorem, 
there exists a subsequence $\{\nu_k\}_{k=1}^\infty$ such that $\varphi^{\nu_k}$ converges uniformly on compact subsets of $G_{n-1}$. 
Denote the limit by $\varphi:G_{n-1}\to \cx^n$. Since $f$ is a proper holomorphic map, $\varphi(G_{n-1})\subset \bd \Sigma^nV$.
Using Lemma~\ref{lem-bdy} we get that there is a constant $c\in\bd V$ such that on $G_{n-1}$:
\begin{equation}\label{eq-polyc}
q(\varphi;c)=c^n-c^{n-1}\varphi_1+\dots+(-1)^n\varphi_n\equiv 0.
\end{equation}
Differentiating \eqref{eq-polyc} with respect to $z_j$  for each $j\in I_{n-1}$ gives rise to the following system of equations:
\begin{equation}\label{eq-system}
\sum_{k=1}^n(-1)^kc^{n-k}\dfrac{\partial \varphi_k}{\partial z_j}=0,\quad  j\in I_{n-1}.
\end{equation}
Since $f: G\to \Sigma^n V$ is a proper holomorphic map,  the complex Jacobian determinant $\det(f')$  of $f$ does not vanish identically. 
Therefore, there is a $\mu\in I_n$  such the $(n-1)\times(n-1)$ minor of the Jacobian determinant  obtained by  omitting the 
$\mu$-th column is not identically zero on $G$.  We consider the following 
system of linear equations in the  $n-1$ variables  $(-1)^kc^{n-k},\quad  k\in I_n\setminus\{\mu\}$, obtained by moving the $\mu$-th column of  
\eqref{eq-system} to the 
right hand side:
\begin{equation}\label{eq-sysm-1}
\sum_{k\in I_n\setminus\{\mu\}}(-1)^kc^{n-k}\frac{\partial\varphi_k}{\partial z_j}=(-1)^{\mu+1}c^{n-\mu}
\frac{\partial \varphi_\mu}{\partial z_j},\quad  j\in I_{n-1}.
\end{equation}
Let $B$ be an $(n-1)\times n$ matrix, and let $k\in I_n\setminus\{ \mu\}$. Denote by $\Delta_k(B)$ the determinant of the $(n-1)\times (n-1)$
matrix obtained by  removing the  $\mu$-th column of $B$ and replacing it by the $k$-th column. Denote also by $\Delta_\mu(B)$ the determinant 
of the  $(n-1)\times (n-1)$ matrix obtained by  removing the  $\mu$-th column of $B$. Notice that each of the functions $\Delta_j$ is a polynomial in the 
entries of the matrix $B$.

 We now solve the system \eqref{eq-system} by Cramer's rule to obtain:
 \begin{equation}\label{eq-cramer}
  (-1)^k c^{n-k} = (-1)^{\mu} c^{n-\mu}\frac{\Delta_k({\mathsf D}_{n-1}\varphi)}{\Delta_\mu({\mathsf D}_{n-1}\varphi)},
  \end{equation}
where ${\mathsf D}_{n-1}\varphi$ is the $(n-1)\times n$ matrix 
$\left(\dfrac{\partial \varphi_k}{\partial z_j}\right)_{j\in I_{n-1}, k\in I_n}$. Note that by the choice of $\mu$, the denominator $\Delta_\mu({\mathsf D}_{n-1}\varphi)$ does not vanish identically on $G_{n-1}$.

We now consider two cases. If $\mu\not=1$,  we let $k=\mu-1$ in \eqref{eq-cramer}. Then we have on $G_{n-1}$ 
\begin{equation}\label{eq-cramer2} \Delta_{\mu-1}({\mathsf D}_{n-1}\varphi)= -c \cdot\Delta_{\mu}({\mathsf D}_{n-1}\varphi).\end{equation}

Let $\Psi_k(z)= \Delta_k({\mathsf D}_{n-1}\varphi(z)).$
 Then $\Psi_k$ is a holomorphic function on $G_{n-1}$. Note that $\Psi_k$ depends on the choice of the 
sequence $w^\nu$ as well as the subsequence $\nu_k$ (since $\varphi$ depends on these choices.) We have
$\Psi_{k-1}(z)= -c \Psi_k(z)$ for all  $z\in G_{n-1}$. Differentiating with respect to  $z_j$ and eliminating $c$  we obtain the relation
\begin{equation}\label{eq-psik}\Psi_{\mu-1}(z)\frac{\partial \Psi_\mu}{\partial z_j}(z)
- \Psi_{\mu}(z)\frac{\partial \Psi_{\mu-1}}{\partial z_j}(z)=0\end{equation}
for all $z\in G_{n-1}$.  Denote by  ${\mathsf D}_{n-1}^2\varphi$ the collection of  second derivatives of the components
$(\varphi_1,\dots, \varphi_n)$ of $\varphi$ with respect to the $(n-1)$ coordinate variables $z_1,\dots, z_{n-1}$.  From the chain rule, there is clearly 
a polynomial $E_{jk}$ such that
\begin{align*}\frac{\partial \Psi_k}{\partial z_j}(z)&= \frac{\partial}{\partial z_j}\Delta_k({\mathsf D}_{n-1}\varphi(z))\\
&= E_{jk}\left( {\mathsf D}_{n-1}\varphi(z); {\mathsf D}_{n-1}^2\varphi(z)\right).
\end{align*}
Therefore, if $H_{j}$ is the polynomial $\Psi_{\mu-1} E_{j\mu}- \Psi_\mu E_{j-1,\mu}$ then \eqref{eq-psik} may be rewritten as
\[ H_{j}\left({\mathsf D}_{n-1}\varphi(z);  {\mathsf D}_{n-1}^2\varphi(z)\right)=0.\]
As previously noted, the map $\varphi$ on the left hand side of this equation
depends on the sequence $w^\nu$ as well as the subsequence $\nu_k$, but the right hand side 
is independent of these choices.  Using Weierstrass theorem on the uniform convergence of derivatives, we conclude  that for each sequence
$\{w^\nu\}$ in $U_n$  converging  to a point in $\bd U_n$, we have
\[ \lim_{\nu\to \infty}H_{j}\left({\mathsf D}_{n-1} f(\cdot , w^\nu); {\mathsf D}_{n-1}^2 f(\cdot , w^\nu)\right)=0,\]
where ${\mathsf D}_{n-1} f$ and ${\mathsf D}^2_{n-1} f$ denote the collection of the first and second partial derivatives of $f$ with respect to the coordinates.
Consequently, we have by the maximum principle that  for $('z,w)\in G_{n-1}\times U_n=G$ 
\[ H_{j}\left({\mathsf D}_{n-1} f('z , w); {\mathsf D}_{n-1}^2 f('z , w)\right)=0,\]
i.e, on $G$ we have
\[ H_{j}\left({\mathsf D}_{n-1} f;{\mathsf D}_{n-1}^2 f\right)\equiv 0.\]
Recalling that this equation is a restatement of \eqref{eq-psik}, we conclude therefore that
\[ \Delta_{\mu-1}({\mathsf D}_{n-1} f) \frac{\partial}{\partial z_j} \Delta_{\mu}({\mathsf  D}_{n-1} f) 
- \Delta_{\mu}({\mathsf D}_{n-1} f) \frac{\partial}{\partial z_j} \Delta_{\mu-1}({\mathsf  D}_{n-1} f) =0,\]
which implies that
\[ \frac{\partial}{\partial z_j} \left(\frac{\Delta_{\mu-1}({\mathsf  D}_{n-1} f)}{\Delta_{\mu}({\mathsf  D}_{n-1} f) } \right) \equiv 0  \text{  for  } j \in I_{n-1},\]
holds outside the  proper analytic subset $A= \{ \Delta_{\mu}({\mathsf  D}_{n-1} f)=0\}$ of $G$. Therefore, there is a function $g_n$ defined on $G\setminus A$,
and depending only on $z_n$ such that on $G\setminus A$, we have
\[ g_n= -\frac{\Delta_{\mu-1}({\mathsf  D}_{n-1} f)}{\Delta_{\mu}({\mathsf  D}_{n-1} f)}.\]

Now let again $w^\nu$ be a sequence in $U_n$ converging to a point of $\bd U_n$, and let $\varphi$ and $c$ be the corresponding quantities as in 
\eqref{eq-polyc}. Taking setting $z_n=w^\nu$ in the above equation,  using relation \eqref{eq-cramer2} and taking the limit we see that $\lim_{\nu\to\infty} g_n(w^\nu)= c$. Again, in the function on $G$ given by $r(z)=q(f(z);g_n(z_n))$, setting $z_n=w^\nu$ and letting $\nu\to \infty$ shows that $r('z,w^\nu)\to 0$
as $\nu\to \infty$. It follows that $r('z,z_n)\to 0$ as $z_n\to \bd U_n$. Consequently, by the maximum principle $r\equiv 0$ on $G\setminus A$, i.e.,  we have
$q(f(z); g_n(z_n))\equiv 0$ on $G\setminus A$. Since $V$ is bounded,  it follows that the roots of the equation $q(f(z);t)=0$ are bounded independently of
$z\in G$, so that $g$ is a bounded holomorphic function, and therefore extends holomorphically to $G$. Consequently we have the relation $q(f(z); g_n(z_n))\equiv 0$ on the whole of $G$.

If $\mu=1$, we take $k=2$ in \eqref{eq-cramer}. One can repeat the arguments to conclude again that there is a holomorphic $g_n$ on $U_n$ such that 
$q(f(z); g_n(z_n))\equiv 0$ on $G$.

Repeating the argument for each factor $U_j$ in the product $G=U_1\times\dots\times U_n$, we conclude that there is a function $g_j:U_j\to V$ for 
each $j\in I_n$ such that  $q(f(z);g_j(z_j))= 0$ for $z$ in  $G$. It follows now that $\pi(g_1(z_1),\dots, g_n(z_n))=0$.
\end{proof}
We can now complete the proofs   of  Corollary~\ref{cor-proper} and  Corollary~\ref{cor-bdryreg}:
\begin{proof}[Proof of Corollary~\ref{cor-proper}]
Since $\Phi$ and $\pi$ are both proper holomorphic, $\Phi\circ\pi$ is proper holomorphic map from 
$U^n$ to $\Sigma^n V$.  From Theorem~\ref{thm-proper}, we conclude that  there exist proper holomorphic maps 
$\varphi_j:U\to V$, $j=1,\dots, n$, such that 
\[
\Phi\circ\pi=\pi\circ(\varphi_1\times\dots\times\varphi_n).
\]
We have $\Phi\circ\pi(z_1,\dots,z_n)=\pi(\varphi_1(z_1),\dots,\varphi_n(z_n))$, where $z=(z_1,\dots,z_n)\in U^n$. Hence, for each $\sigma\in S_n$
and  $z\in U^n$, we get, using the symmetry of the map $\pi$ that
\[
\pi(\varphi_1(z_1),\dots,\varphi_n(z_n))=\Phi\circ\pi(z)=\Phi\circ\pi(\sigma z)=\pi(\varphi_1(z_{\sigma(1)}),\dots,\varphi_n(z_{\sigma(n)})).
\]
This  is possible only when there is a $\varphi:U\to V$ such that
\[
\varphi_1=\varphi_2=\dots=\varphi_n =\varphi.
\]
Hence,  $\Phi=\Sigma^n\varphi$.
\end{proof}
\begin{proof}[Proof of Corollary~\ref{cor-bdryreg}] 
Since $\Phi:\Sigma^n U\to \Sigma^n V$ is a proper holomorphic map, by  Corollary~\ref{cor-proper}, 
there is a proper holomorphic map $\varphi:U\to V$ such that  $\Phi=\Sigma^n\varphi.$
On the other hand proper holomorphic maps between domains with $\mathcal{C}^\infty$-boundaries extend to $\mathcal{C}^\infty$ 
maps of the closures. This can be seen either from the general result in several variables (see \cite{DF, BC}), or in a more elementary way, 
by noting that the proper map being of finite degree, the branching locus is a finite set, and near the boundary the proper map is actually conformal,
so that the classical theory of boundary regularity in one variable may be used. The result now follows from Theorem~\ref{thm-akn}.
\end{proof}


\end{document}